\documentclass[11pt,a4paper,reqno]{amsart}
\usepackage[applemac,utf8]{inputenc}
\usepackage[T1]{fontenc}
\usepackage{ulem}
\usepackage{amsmath,esint,color}
\usepackage{amsthm}
\usepackage{amsfonts}
\usepackage{amssymb}
\usepackage{bigints}
\usepackage{fullpage}
\usepackage[colorlinks=true, pdfstartview=FitV, linkcolor=blue, citecolor=blue, urlcolor=blue]{hyperref}

\evensidemargin 0.0in \oddsidemargin 0.0in \textwidth 6.5in
\topmargin  -0.2in \textheight  9.5in \overfullrule = 0pt
%\linespread{1.3}

\date{}
\definecolor{sah}{rgb}{0.66,0.33, 0.04}
\definecolor{adel4}{cmyk}{1,0,0,0}
\definecolor{adel3}{rgb}{0.66,0.33, 0.04}
\definecolor{adel1}{cmyk}{0,0.20,1,0}
\definecolor{adel2}{cmyk}{0,0.40,1,0.30}
\definecolor{adel0}{rgb}{0.99,0.60, 0.30}
\definecolor{trut}{rgb}{0.99,0.80, 0.00}
\definecolor{trus}{rgb}{0.00, 0.50, 0.00}
 \definecolor{trust}{rgb}{0.99, 0.99, 0.80}
\definecolor{MaCouleur}{rgb}{0,0.9,0.3}

\newcommand{\NN}{\mathbb{N}}  
\newcommand{\RR}{\mathbb{R}}  

\newcommand{\pxl}{\partial_{X_\lambda}}
\newcommand{\pxzl}{\partial_{X_{0,\lambda}}}
\newcommand{\pxtl}{\partial_{X_{t,\lambda}}}
\newcommand{\pxt}{\partial_{X_{t}}}
\newcommand{\pxz}{\partial_{X_{0}}}
\theoremstyle{plain}
\newtheorem{definition}{Definition}
\newtheorem{theorem}{Theorem}
\newtheorem{proposition}{Proposition}
\newtheorem{lemma}{Lemma}
\newtheorem{remark}{Remark}
\newtheorem{coro}{Corollary}

\sloppy

\def\virgp{\raise 2pt\hbox{,}}

\def\Xint#1{\mathchoice
   {\XXint\displaystyle\textstyle{#1}}%
   {\XXint\textstyle\scriptstyle{#1}}%
   {\XXint\scriptstyle\scriptscriptstyle{#1}}%
   {\XXint\scriptscriptstyle\scriptscriptstyle{#1}}%
   \!\int}
\def\XXint#1#2#3{{\setbox0=\hbox{$#1{#2#3}{\int}$}
     \vcenter{\hbox{$#2#3$}}\kern-.5\wd0}}

\def\av_#1{\Xint-_{#1}}

\usepackage[textmath,displaymath,floats,graphics,delayed]{preview}
\sloppy

 \parindent 0pt
\title[]{On the inviscid   Boussinesq system with rough initial data}
\author[]{ZINEB HASSAINIA}
\address{IRMAR, Universit\'e de Rennes 1 \\ Campus de Beaulieu \\  35 042 Rennes cedex, France}
 \email{zineb.hassainia@univ-rennes1.fr}
 \author[T. Hmidi]{Taoufik Hmidi}
\address{IRMAR, Universit\'e de Rennes 1\\ Campus de
Beaulieu\\ 35~042 Rennes cedex\\ France}
\email{thmidi@univ-rennes1.fr}
\begin{document}
\subjclass[2000]{35Q35, 76B03, 76C05}
\keywords{ 2D inviscid Boussinesq system, vortex patches, local well-posedness}
\begin{abstract}
We deal with the local well-posedness theory  for the  two-dimensional inviscid Boussinesq system with rough initial   data of Yudovich type. The problem is in some sense  critical  due to some terms involving  Riesz transforms in the vorticity-density formulation.   We give a positive answer    for a special sub-class of Yudovich data including smooth and  singular vortex patches. For the latter case we assume in addition that the initial density is  constant around the singular part of the  patch boundary. %with smooth boundary offer a suitable class of initial data for which the problem can be solved  we prove a  local well-posedness result   for the  two-dimensional inviscid Boussinesq system under initial vorticity  having a stratified regularity generalizing the structure of the vortex patch and  natural additional assumptions on the initial density. 
\end{abstract}
\maketitle
\tableofcontents
\section{Introduction}
\quad\quad We consider the inviscid Boussinesq system describing the planar motion of a perfect incompressible  fluid evolving  under an external vertical force whose amplitude is proportional to the density which  is in turn  transported by the flow associated to  the velocity field. The corresponding  equations are given by,\begin{equation}\label{B}
\left\{ \begin{array}{ll}
\partial_{t}v+v\cdot\nabla v+\nabla p =\rho\vec{e}_2,\quad  t\geq0, x\in \RR^2 &\\
\partial_{t}\rho+v\cdot\nabla\rho =0, &\\
\textnormal{div}\, v=0, &\\
v_{\vert t=0}=v_0,\quad \rho_{\vert t=0}=\rho_0 .
\end{array} \right. 
\end{equation} 
Here the vector field $v = (v_1, v_2)$ and the scalar function $p$ denote   the fluid velocity  and the pressure  respectively.    The density  $\rho$ is a  passive scalar quantity  and the buoyancy force $\rho \vec{e_2}$ in the velocity equation models the gravity effect  on the fluid motion, where $\vec{e_2}$ stands for the unit vertical \mbox{vector $(0, 1)$.}
 
\quad This   system serves as  a simplified model  for the  fluid dynamics of the oceans and atmosphere. It  takes into account the stratification which plays a dominant role for large scales.   For more details about this subject see for instance \cite{B-C-R} and \cite{P}. 
The derivation of the  above system  can be formally  done  from the density dependent  Euler equations through the Oberbeck-Boussinesq approximation where the density fluctuation  is neglected everywhere in the momentum equation except  in the buoyancy force. We point out that Feireisl and Novotn\'y provide in \cite{F} a rigorous justification of the viscous  model  by means of scale analysis and  singular limit of   the full compressible  Navier-Stokes-Fourier system. 
  Note that the system \eqref{B} coincides with the classical  incompressible Euler equations when the initial density $\rho_0$ is identically  constant.   Recall that Euler system is given by,
\begin{equation}\label{E}
\left\{ \begin{array}{ll}
\partial_{t}v+v\cdot\nabla v+\nabla p =0, &\\
\textnormal{div}\, v=0, &\\
v_{\vert t=0}=v_0.
\end{array} \right. 
\end{equation} 
Before discussing some theoretical results on the well-posedness problem for the inviscid  Boussinesq equations we shall first start with the state of the art for the system \eqref{E}. The local existence and uniqueness of very smooth solutions  for \eqref{E} goes back to Wolibner \cite{W}  in the thirties of the last century. This result has been improved through the years by numerous authors and for several functional spaces. The pioneering work in this field  is  accomplished by  Kato and Ponce  in \cite{K-P} who proved the local well-posedness  in the framework of    Sobolev spaces $H^{s}$, with $s>\frac{d}{2}+1$.  This result was later generalized for other spaces, see for instance  \cite{C0,C,P-P,V,Z} and the references therein. Whether or not classical solutions develop singularities in finite time   is still  open except  some special cases as the planar motion  or the axisymmetric flows without swirl.  Unlike the viscous models, the global theory  for Euler equations has a geometric feature and    relies crucially on the vorticity dynamics. Historically, the concept of the vorticity   $\omega\triangleq \textnormal{rot}v$ and their basic laws  were studied by Helmholtz in his seminal work  on the vortex motion theory \cite{He}. More recently,  a blow-up vorticity criterion  for Kato's solutions  was  given by Beale, Kato and Majda in \cite{B-K-M}: the lifespan $T^\star$ is finite if and only if  $ \displaystyle{\int_0^{T^\star}\Vert \omega(\tau)\Vert_{L^\infty}d\tau=+\infty}.$
In two  dimensions the vorticity  can be identified to the scalar function $\omega=\partial_1 v_2-\partial_2 v_1$ and it  is transported by the flow, 
\begin{equation}\label{omegaeuler}
 \partial_t \omega+v\cdot\nabla\omega=0. 
\end{equation}
This leads to an  infinite family of conservation laws. For example, we have $\Vert \omega(t)\Vert_{L^p}= \Vert \omega_0\Vert_{L^p}$ for any $p\in [1,\infty]$. Hence the global well-posedness of Kato's solution  follows from the Beale-Kato-Majda criterion.

\quad By using the formal $L^p$ conservation laws it seems that we can relax the classical regularity and construct global weak solutions in $L^p$ spaces for $p > 1$.  This question has been originally addressed by Yudovich in \cite{Y}, where he proved the existence and uniqueness of weak solution to 2D Euler system only with the  assumption $\omega_0\in L^p\cap L^\infty$. Under this  pattern, the velocity is no longer  in the Lipschitz class  but belongs to the log-Lipschitz functions. With a velocity being in this latter class, it is proved that the flow map $\psi$ defined below is uniquely defined in the class of continuous  functions in both space and time variables, 
 $$
\psi(t,x)=x+\int_0^tv(\tau,\psi(\tau))d\tau.
$$
 We  can find more details about this subject in the book  \cite{C}. As a by-product we obtain  the global persistence of the vortex patch structure. More precisely, if the  initial vorticity  $\omega_0={\bf{1}}_{\Omega_0}$ is a patch, that is, the characteristic function of a bounded domain $\Omega_0$ then its evolution is given \mbox{$\omega(t)={\bf{1}}_{\Omega_t}$} with \mbox{$\Omega_t\triangleq\psi(t,\Omega_0)$.} The regularity persistence of the boundary is very subtle and was successfully accomplished by Chemin in \cite{C} who showed in particular  that when the boundary $\partial\Omega_0$ is better than $C^1$, say in   $C^{1+\varepsilon}$ for $0<\varepsilon<1$, then \mbox{$\partial\Omega_t$} will keep  its initial regularity for all the time  without any loss. The proof relies heavily on the estimate of the Lipschitz norm of the velocity with  the co-normal regularity $\partial_X\omega$ of the vorticity.  The  vector  fields $(X_t)$ are  transported by the flow, that is,
      \begin{equation}\label{tr71}
      \partial_t X+v\cdot\nabla X=X\cdot\nabla v.
      \end{equation}  
 The main advantage of this choice is the commutation of these vector fields   with the transport \mbox{operator $\partial_t+v\cdot\nabla $} which leads in turn  to the master equation
   \begin{equation}\label{transX}
  (\partial_t +v\cdot\nabla) \partial_X\omega=0. 
   \end{equation}
   This means that the tangential derivative of the vorticity is also transported by the flow and this is  crucial  in the framework of the vortex patches.   The proof  given by  Chemin is not restrictive to  the usual  patches but covers more singular  data called generalized vortex patches.  We point out that there is another  proof in the special case of the vortex patches  that can be found in  \cite{B-C}. It is also important to mention that Chemin  got in fact   more accurate result for patches with  singular boundary. In broad terms, he  showed that the regular part of the initial boundary  $\partial\Omega_0$ propagates with the same regularity without being affected by the singular part which   by the reversibility of the problem cannot be smoothed out by the dynamics and becomes  better  than $C^1$  .   
Furthermore, the  velocity $v$ is Lipschitz far from the singular set and may undergo a blowup behavior near  this set with a rate bounded by the logarithm of the distance  from the singular set.  Many similar  studies have been subsequently implemented by numerous authors  for bounded domains or viscous flows, see for instance   \cite{DA,D0,Da,De,GP,H,H1,S} and the references therein. 

 \quad Now, bearing in mind that the system \eqref{B} is at a formal level a perturbation of the incompressible Euler equations, it is legitimate to see whether the known results for Euler equations work for the Boussinesq system as well.  The earliest mathematical studies of the Boussinesq system and its dissipative counterpart are relatively recent and a great deal of attention has been paid  to   the local/global well-posedness problem, see for instance \cite{Ab,Dan, dp, H-K-R, HR2,HZ,HZ1, Lar,MZ, Wu}. Hereafter, we shall primarily restrict the discussion to the inviscid model described by 
\eqref{B} and recall some known facts on the classical solutions. We stress that this system can be seen as a hyperbolic one and therefore the commutator theory developed by Kato can be applied in a straightforward way. This was done by  Chae and Nam in  \cite{CN} who proved the  local well-posedness when the initial data $(v_0,\rho_0)$ belong to the sub-critical Sobolev space $ H^s$ with $s>2$. A similar result was also established later by the same authors  \cite{CN2}  for initial data lying in H\"olderian spaces $C^r$ with $r>1$. Another local  well-posedeness result is recently obtained in \cite{L2} for  the critical  Besov spaces $B^{2/p+1}_{p,1}$, with $p\in]1,+\infty[$. Furthermore, an analogous Beale-Kato-Majda criterion can be stated for the sub-critical cases.  More precisely, it can be shown that Kato's solutions cease to exist in finite time $T^\star$ if and only if $$
\int_0^{T^\star}\Vert \nabla\rho(t)\Vert_{L^\infty}dt=+\infty.
$$
For more details  we refer the reader for instance to \cite{L2,T}. Whether or not $T^\star$ is finite remains an outstanding open problem.    

\quad The main scope of  this paper is to deal with the  local well-posedness for \eqref{B} when the initial data are rough and belong  to   Yudovich class.  Contrary to the incompressible Euler equations the problem sounds extremely  hard to solve for generic Yudovich data due to the violent coupling between the vorticity and the density.  The difficulties can be illustrated from  the vorticity-density formulation,
\begin{equation}\label{omega}
\left\{ \begin{array}{ll}
\partial_{t}\omega+v\cdot\nabla\omega =\partial_1\rho, &\\
\partial_{t}\rho+v\cdot\nabla\rho =0, &
% \Delta v=\nabla^\perp\omega.
\end{array} \right. 
\end{equation}
According to the first equation in the above system one gets
$$
\|\omega(t)\|_{L^\infty}\le \|\omega_0\|_{L^\infty}+\int_0^t\|\nabla \rho(\tau)\|_{L^\infty} d\tau.
$$ 
 As we shall now see  the estimate of the last integral  requires the initial data to be more strong than what is allowed by Yudovich class. Indeed,  the partial derivative $\partial_j\rho $ obeys to the following transport model,
\begin{equation}\label{nablarho}
(\partial_{t}+v\cdot\nabla)\partial_j\rho =\partial_j v\cdot\nabla\rho.
\end{equation}
Consequently, the estimate of  $\|\nabla\rho(t)\|_{L^\infty} $  requires the velocity field to be at least  Lipschitz with respect to the space variable and  unfortunately this is  not necessary satisfied with a bounded vorticity. 
The main goal of   this paper is to give a positive answer for the local well-posedness problem for a special class of Yudovich data. We shall in the first part prove the  result for vortex patches with smooth boundary. In the second part we  conduct  the same study for patches with singular boundaries. 
 Our first result reads as follows
%%%%%%%%%%%%%
\begin{theorem}\label{the}
Let $0<\varepsilon<1$ and  $\Omega_0$ be a bounded domain  of the plane with a boundary $\partial\Omega_0$ in  H\"{o}lder class $C^{1+\varepsilon}$.
 Let $v_0$  be a divergence-free vector field of vorticity $\omega_0=1_{\Omega_0}$ and consider $\rho_0\in L^2\cap C^{1+\varepsilon}$ a real-valued function with  $\nabla\rho_0\in L^a$ and $1<a<2$. Then, there exists $T>0$ such that the Boussinesq  system  \eqref{B} admits a unique local solution $v,\rho\in L^\infty\big([0,T], W^{1,\infty}\big)$. Moreover, for all $t\in[0,T]$ the boundary of the advected domain   $\Omega_t=\psi(t,\Omega_0)$ is of class $C^{1+\varepsilon}$.
 \end{theorem}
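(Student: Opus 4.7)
My plan is to adapt Chemin's striated-regularity framework, originally developed for the vortex patch problem for \eqref{E}, to the Boussinesq system written in vorticity-density form \eqref{omega}. The core object is a family of vector fields $(X_{t,\lambda})_\lambda$, initially tangent to $\partial\Omega_0$, of class $C^\varepsilon$ and bounded below in norm near the boundary, and transported by the flow via \eqref{tr71}. Because of the commutation between $\pxl$ and the material derivative $\partial_t+v\cdot\nabla$, the system yields two master equations: $\pxl\rho$ is purely transported, while $\pxl\omega$ satisfies the forced transport equation
$$(\partial_t+v\cdot\nabla)\pxl\omega=\pxl\partial_1\rho.$$
The Lipschitz norm of $v$ is then recovered from the co-normal regularity through a Chemin-type logarithmic inequality of the form
$$\|\nabla v\|_{L^\infty}\leq C\bigl(\|\omega\|_{L^2\cap L^\infty}+\sup_\lambda\|\pxl\omega\|_{C^{\varepsilon-1}}/I(X)\bigr)\log\bigl(e+\sup_\lambda\|X_\lambda\|_{C^\varepsilon}/I(X)\bigr),$$
where $I(X)=\inf_{\partial\Omega_t,\lambda}|X_\lambda|$, and this is what couples the whole system of estimates.

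The proof reduces to closing, on a small time interval $[0,T]$, a coupled set of a priori bounds. Transport by the divergence-free velocity preserves $\|\rho\|_{L^p}$ for $p\in\{2,\infty\}$, and equation \eqref{nablarho} together with Gronwall controls $\|\nabla\rho(t)\|_{L^a}$ and $\|\nabla\rho(t)\|_{L^\infty}$ by $\int_0^t\|\nabla v(\tau)\|_{L^\infty}d\tau$; the hypothesis $\nabla\rho_0\in L^a$ with $1<a<2$ provides, by interpolation with $L^\infty$, control of $\nabla\rho$ in every $L^p$ for $p\in[a,\infty]$, which feeds the Riesz-operator estimates alluded to in the abstract. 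The vorticity equation gives $\|\omega(t)\|_{L^\infty}\leq\|\omega_0\|_{L^\infty}+\int_0^t\|\nabla\rho(\tau)\|_{L^\infty}d\tau$. Pure transport of $\pxl\rho$ preserves its $C^\varepsilon$ norm up to a Gronwall factor involving $\|\nabla v\|_{L^\infty}$, while equation \eqref{tr71} controls $\|X_\lambda\|_{C^\varepsilon}$ and the lower bound $I(X)$ in the same manner. The forced transport equation for $\pxl\omega$ yields, after Gronwall, a bound on $\|\pxl\omega\|_{C^{\varepsilon-1}}$ in terms of $\int_0^t\|\pxl\partial_1\rho\|_{C^{\varepsilon-1}}d\tau$. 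Feeding these bounds back into the logarithmic estimate and applying an Osgood-type argument yields a lifespan $T>0$ on which all bounds remain finite.

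The main obstacle, absent in Chemin's Euler analysis, is the control of the forcing $\pxl\partial_1\rho$ in $C^{\varepsilon-1}$. I handle it through the commutator identity
$$\pxl\partial_1\rho=\partial_1(\pxl\rho)-(\partial_1 X_\lambda)\cdot\nabla\rho,$$
where the first term is an ordinary derivative of the $C^\varepsilon$ transported quantity $\pxl\rho$, hence naturally lies in $C^{\varepsilon-1}$, and the second is the product of $\partial_1 X_\lambda\in C^{\varepsilon-1}$ with $\nabla\rho\in L^\infty\cap C^\varepsilon$, estimated by Bony's paraproduct calculus. Once this source is under control, existence is obtained by a standard regularization procedure: smoothing the initial data via Friedrichs mollifiers, invoking the classical local well-posedness of \cite{CN,CN2} for the approximate smooth problems, and passing to the limit using the uniform bounds above. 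Uniqueness follows from a Yudovich-type $L^2$ energy estimate on the difference of two solutions: since $v$ is globally Lipschitz on $[0,T]\times\RR^2$ and $\rho$ lies in $W^{1,\infty}$, a direct Gronwall argument (or Osgood's lemma if only log-Lipschitz regularity is available) gives the result. Finally, the $C^{1+\varepsilon}$ regularity of $\partial\Omega_t=\psi(t,\partial\Omega_0)$ is a direct consequence of the non-degeneracy and $C^\varepsilon$ control of the transported vector fields $X_{t,\lambda}$ along the boundary, via the classical Chemin characterization of Hölder patches by striated vector fields.
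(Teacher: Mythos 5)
Your proposal follows essentially the same route as the paper: striated regularity \`a la Chemin transported by the flow, the two master equations for $\partial_{X_\lambda}\rho$ (pure transport) and $\partial_{X_\lambda}\omega$ (forced by $\partial_{X_\lambda}\partial_1\rho$), the commutator identity $\partial_{X_\lambda}\partial_1\rho=\partial_1(\partial_{X_\lambda}\rho)-(\partial_1X_\lambda)\cdot\nabla\rho$ handled by Bony's paraproduct, the logarithmic estimate closing the Gronwall loop, regularization plus the Chae--Nam local theory for existence, and the transported tangent vector field for the $C^{1+\varepsilon}$ persistence of $\partial\Omega_t$. The one point where your version would snag is the uniqueness step: an $L^2$ energy estimate is not directly available because the velocity of a vortex patch (and, via Biot--Savart from a vorticity difference lying only in $L^a\cap L^\infty$ with $a<2$, the difference of two solutions) fails to be square-integrable due to its slow decay at infinity; the paper circumvents this by running the same Gronwall argument in $L^q$ for some $q>2$, using the continuity of the Riesz transforms on $L^q$ to control the pressure.
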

Before giving some details about the proof  we shall discuss few remarks.
\begin{remark}
 The result of Theorem \ref{the} will be extended in Theorem $\ref{th1}$  to more general vortex structures. We shall get in particular a lower bound for the lifespan which is infinite for constant densities corresponding to the global result for Euler equations.  More precisely we get    $$
T^\star\geq \frac{1}{C_0}\log\bigg(1+C_0\log\Big(1+C_0/\Vert\nabla\rho_0\Vert_{L^\infty}\Big)\bigg).
$$
where $C_0\triangleq C_0(\omega_0,\rho_0)$ depends continuously on the involved norms.

\end{remark}
\begin{remark}
 For the sake of a clear presentation we have assumed in Theorem $\ref{the}$  that the density $\rho_0\in C^{1+\varepsilon}.$  The  persistence of such regularity is not clear and requires more than the Lipschitz norm for the velocity. However, as we shall see in Theorem $\ref{th1}$ this condition can be relaxed to   one that can be transported without loss: we replace this space by an anisotropic one.
 \end{remark}
\quad The proof of Theorem \ref{the} is firmly based on the formalism of vortex patches developed by Chemin  in \cite{C, C1}. The key is to estimate the  tangential regularity $\partial_X \omega$ in the H\"{o}lder space of negative index  $C^{\varepsilon-1}$, with respect to a suitable family of vector fields. Since this family commutes with the transport operator $\partial_t+v\cdot\nabla$ one gets easily the equation
\begin{eqnarray*}
(\partial_t+v\cdot\nabla)\partial_X\omega&=&\partial_X\partial_1\rho\\
&=&\partial_1(\partial_X\rho)+[\partial_X,\partial_1]\rho.
\end{eqnarray*}
By using para-differential calculus we can show that the commutator term is well-behaved and therefore the problem reduces  to the estimate $\|\partial_X\rho\|_{C^{\varepsilon}}.$ For this latter term we use anew the commutation  between $\partial_X$ and the transport operator combined with the fact that the density is also conserved along the particle  trajectories. Hence we find the equation
$$
(\partial_t+v\cdot\nabla)\partial_X\rho=0.
$$
This structure is very important in our analysis in order to derive some crucial a priori estimates. 

\quad Let us  move on  to the second contribution of this paper which is concerned with the singular vortex patches. We shall  assume that $\omega_0={\bf{1}}_{\Omega_0}$ but the  boundary may now contain a singular subset.  As the example of the square indicates,  the velocity associated to a vortex patch  is not in general  Lipschitz and this will bring more technical difficulties. Similarly to the smooth boundary one needs to bound $\|\nabla \rho(t)\|_{L^\infty}$ and from the characteristic method we obtain
$$
\Vert\partial_j\rho(t)\Vert_{L^\infty}\leq\Vert\nabla\rho_0\Vert_{L^\infty}+\int_0^t\Vert\partial_j v\cdot\nabla\rho(t)\Vert_{L^\infty}.
$$ 
We expect the singularities initially  located at the  boundary to be frozen in  the particle  trajectories and the idea to treat the last integral term  is to annihilate the effects of the velocity singularities by some specific assumptions on  the density.   As a possible choice we shall  assume  the  initial density to be constant around the singularity set and from its transport structure the density will remain constant around the image by the flow of the singular set. This allows to track the singularities and kill their nasty effects by the density.    %we shall get from the vortex patch formalism some useful information only far from this singular set.
 %we have to introduce an additional assumption on $\rho_0$. As a matter of fact, from equation \eqref{nablarho}, the control of the Lipschitz norm of $\rho$  which is crucial to obtain the local existence, which is crucial to obtain the local existence, is  given by the estimate
%Then, to reveal this difficulty generated by the singular part of the initial vorticity we will assume that  $\nabla\rho_0$ vanish in a neighborhood of the singular set. 
Our result reads as follows,
\begin{theorem}\label{the2}
Let $0<\varepsilon<1$ and $\Omega_0$ be a bounded domain of the plane whose boundary $\partial\Omega_0$ is a curve of class $C^{1+\varepsilon}$ outside a closed set $\Sigma_0$.
 Let us consider  a divergence-free vector field $v_0$ of vorticity  $\omega_0=1_{\Omega_0}$ and  take $\rho_0\in L^2\cap C^{\varepsilon+1}$  with   $\nabla\rho_0\in L^a$ for some  $1<a<2$. 
Suppose that $\rho_0$ is constant in a small neighborhood of  $\Sigma_0$. 
 Then the  system  \eqref{B} admits a unique local solution $(\omega,\rho)$ such that 
 $$\omega,\rho \in L^\infty\big([0,T], L^2\cap L^\infty ),\quad \nabla\rho\in L^\infty\big([0,T], L^a\cap L^\infty\big).
 $$  
Furthermore, the velocity $v$ is Lipschitz outside $\Sigma_t\triangleq \psi(t,\Sigma_0)$. More precisely, we have
$$
\sup_{h\in(0,e^{-1}]}\frac{\Vert \nabla v(t)\Vert_{L^\infty((\Sigma_t)_h^c)}}{-\log h}\in L^\infty([0,T]),
$$
where the set $(\Sigma_t)_h^c$ is defined by,
$$(\Sigma_t)_h^c\triangleq \big\{x\in \RR^2;\,d\big(x,\Sigma(t)\big)\geq h\big\}
.$$ In addition, the boundary of $\psi(t,\Omega_0)
$  
 is locally in $C^{1+\varepsilon}$ outside the set $\Sigma_t$.  
\end{theorem}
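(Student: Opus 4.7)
The plan is to adapt Chemin's singular vortex patch framework to the Boussinesq coupling, crucially exploiting the hypothesis that $\rho_0$ is constant near $\Sigma_0$ in order to decouple the density dynamics from the velocity singularities concentrated on $\Sigma_t$.

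First I would introduce an admissible family of vector fields $(X_{0,\lambda})_{\lambda\in\Lambda}$ of class $C^\varepsilon$, tangent to $\partial\Omega_0$ on its smooth part, vanishing in a neighborhood of $\Sigma_0$, and non-degenerate in the sense that $I(X_0)\triangleq \inf_x \sup_\lambda |X_{0,\lambda}(x)|$ stays bounded below on every compact subset of $\RR^2\setminus\Sigma_0$. Transporting them by the flow through $(\partial_t + v\cdot\nabla)X_\lambda = X_\lambda\cdot\nabla v$ ensures that $\partial_{X_\lambda}=X_\lambda\cdot\nabla$ commutes with $\partial_t+v\cdot\nabla$, so that the vorticity--density system yields
\begin{equation*}
(\partial_t+v\cdot\nabla)\partial_{X_\lambda}\omega = \partial_1(\partial_{X_\lambda}\rho) + [\partial_{X_\lambda},\partial_1]\rho,\qquad (\partial_t+v\cdot\nabla)\partial_{X_\lambda}\rho = 0.
\end{equation*}

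The structural heart of the argument is the following. If $U_0$ denotes the neighborhood of $\Sigma_0$ on which $\rho_0$ is constant, transport of $\rho$ along the flow forces $\rho(t)$ to remain constant on $U_t\triangleq\psi(t,U_0)$, hence the support of $\nabla\rho(t)$ stays at a positive distance $h(t)$ from $\Sigma_t\subset U_t$. Consequently, the bad terms $\partial_j v\cdot\nabla\rho$ appearing in the equation for $\nabla\rho$ only probe $\nabla v$ on the region where the Chemin stratified estimate
\begin{equation*}
\|\nabla v(t)\|_{L^\infty((\Sigma_t)_h^c)} \lesssim \|\omega(t)\|_{L^\infty}\log\left(e+\frac{\|\partial_{X}\omega(t)\|_{C^{\varepsilon-1}}}{I(X_t)}\right)
\end{equation*}
is available. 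A Gronwall argument in the transport equation for $\nabla\rho$ then produces a double-logarithmic bound on $\|\nabla\rho(t)\|_{L^\infty}$, which feeds back into control of $\|\omega(t)\|_{L^\infty}$. The tangential regularity $\|\partial_X\rho\|_{C^\varepsilon}$ is propagated from the pure transport equation above, using that $v$ is Lipschitz on the support of $\partial_X\rho$, and $\|\partial_X\omega\|_{C^{\varepsilon-1}}$ follows by integrating its transport equation with source terms estimated via paradifferential calculus along the lines indicated for Theorem~\ref{the}.

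Existence is then obtained by regularizing the initial data, invoking the Chae--Nam local theory for the smooth approximations, and passing to the limit by compactness on the uniform time interval delivered by the a priori bounds. Uniqueness is handled in the Yudovich style, via an Osgood-type $L^2$ inequality on the difference of two solutions, driven by the log-Lipschitz bound on $v$ away from $\Sigma_t$ and complemented by a transport estimate on the density difference that uses $\nabla\rho\in L^a\cap L^\infty$. I expect the main obstacle to be the consistency of the two regularities: the equation $(\partial_t+v\cdot\nabla)\partial_X\rho=0$ would normally demand a Lipschitz flow to preserve $C^\varepsilon$, whereas here $v$ is only log-Lipschitz globally. The way out is that $\partial_X\rho$ inherits from $\rho$ the support property keeping it away from $\Sigma_t$, so only the Lipschitz portion of $\nabla v$ ever acts on it; verifying quantitatively that this support remains uniformly separated from $\Sigma_t$ on the interval of existence, through the H\"older regularity of the flow combined with a Gronwall-type distance estimate, is the delicate geometric step that makes the whole scheme close.
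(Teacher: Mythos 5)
Your overall strategy is the one the paper follows: transport an admissible family of vector fields tangent to the smooth part of the boundary, use the constancy of $\rho_0$ near $\Sigma_0$ so that $\nabla\rho(t)$ stays supported away from $\Sigma_t$ and only ever sees the (log-weighted) Lipschitz part of $\nabla v$, propagate $\partial_X\rho$ and $\partial_X\omega$ through the commuting transport equations, close with the logarithmic stationary estimate, and finish with a smoothing/compactness existence argument and a Yudovich-type uniqueness argument. The structural heart of your sketch (killing the interaction $\partial_jv\cdot\nabla\rho$ via the support of $\nabla\rho$) is exactly the paper's Proposition \ref{peor}.

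There is, however, one concrete gap in your setup. You take a \emph{single} family $(X_{0,\lambda})_{\lambda}$ vanishing near $\Sigma_0$ and only require non-degeneracy of $I(X_0)$ ``on every compact subset of $\RR^2\setminus\Sigma_0$''. This qualitative condition cannot produce the quantitative conclusion $\Vert\nabla v(t)\Vert_{L^\infty((\Sigma_t)_h^c)}\lesssim -\log h$: the logarithmic estimate of Theorem \ref{propoo1} involves $N_\varepsilon(\Sigma_h,\mathcal{X}_h)=\sup_\lambda\tilde{\Vert}X_{\lambda,h}\Vert_\varepsilon/I(\Sigma_h,\mathcal{X}_h)$, and if the rate at which $I$ degenerates as $h\to 0$ is uncontrolled, so is the rate of blow-up of $\Vert\nabla v\Vert_{L^\infty((\Sigma_t)_h^c)}$. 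The paper instead works with an infinite family $\mathcal{X}_0=(X_{0,\lambda,h})_{\lambda,h}$ indexed by the cut-off scale $h$ (concretely $X_{0,0,h}=\nabla^\perp(\theta_hf_0)$ with $\theta_h$ a cut-off at distance $h$ from $\Sigma_0$), declared $\Sigma_0$-admissible of order $\Xi_0=(\alpha_0,\beta_0,\gamma_0)$, i.e.\ with \emph{power-law} bounds $h^{\gamma_0}I(\Sigma_h,\mathcal{X}_h)\gtrsim 1$ and $N_\varepsilon(\Sigma_h,\mathcal{X}_h)\lesssim h^{\beta_0}$; it is precisely these power laws that turn into the factor $-\log h$ after the logarithm is taken. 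Obtaining the lower bound on $I$ forces an extra hypothesis that your proposal omits and that the paper states in the remark after Theorem \ref{the2}: a finite-order non-tangency condition $\vert\nabla f_0(x)\vert\geq C\,d(x,\Sigma_0)^{\tilde\gamma}$ near the boundary. Without it the scheme does not close. Two smaller points: (i) for a vortex patch $v\notin L^2$, so the Yudovich uniqueness energy estimate must be run in an affine space $\sigma+L^2$ with $\sigma$ a stationary smooth profile, and the paper's Osgood step uses only $\Vert\nabla v_1\Vert_{L^q}\lesssim q\Vert\omega_1\Vert_{L^a\cap L^\infty}$ with $q\to\infty$, not the log-Lipschitz bound away from $\Sigma_t$; (ii) in the existence step the mollification of $\rho_0$ must be done with compactly supported mollifiers so that the approximations remain constant on a fixed neighborhood of $\Sigma_0$ --- a frequency cut-off $S_n\rho_0$ would destroy the hypothesis your whole argument rests on.
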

\begin{remark}
Let us mention that the  initial singular set is not arbitrary and should satisfy  a weak condition of the following type: there exists two strictly positive real numbers $\tilde{\gamma}$ and $C$ and a neighborhood $V_0$  of $\partial\Omega_0$ such that for any point $x\in V_0$ we have
$$\vert \nabla f(x)\vert\geq C d(x,\Sigma_0)^{\tilde{\gamma}}.$$
Here the function $f:\RR^2\to\RR$ is smooth and satisfies 
$$
\Omega_0=\big\{x, f(x)>0\big\}, \partial\Omega_0=\{x\in \RR^2, f(x)=0\}.
$$This means that the curves defining the boundary of $\Omega_0$ are not tangent to one another at infinite order at the singular points.
\end{remark}
\quad The general outline of the paper is as follows. In the next section  we recall some function spaces and  give some of their useful properties, we also gather  some preliminary estimates. Section 3 is devoted to the study of the regular vortex patches and the last section concerns the singular case. We close this paper with an appendix covering the proof of a technical lemma.

\section{Tools and function spaces}
\quad Throughout this paper, $C$ stands for some real positive constant which may be different in each occurrence and $C_0$ for a positive constant depending on the size of the  initial data. We shall sometimes alternatively use the notation $X\lesssim Y$ for an inequality of the type $X \leq CY$.
 
\quad Let us start with the dyadic partition of the unity whose  proof can be found for instance in  \cite{C}. There exists a radially symmetric function $\varphi$  in $\mathcal{D}(\RR^2\backslash\{0\})$ such that
\begin{equation*}\label{1}
\forall\xi\in\RR^2\backslash\{0\},\quad \sum_{q\in\mathbf{Z}}\varphi(2^{-q}\xi)=1.
\end{equation*}
We define the function $\chi\in\mathcal{D}(\RR^2)$ by
 \begin{equation*}\label{2}
\forall\xi\in\RR^2,\quad \chi(\xi)=1-\sum_{q\ge0}\varphi(2^{-q}\xi).
\end{equation*}
For every $u \in\mathcal{ S}'(\RR^2)$ one defines the non homogeneous Littlewood-Paley operators by,
$$
\Delta_{-1}v=\mathcal{F}^{-1}\big(\chi \hat{v}\big),\quad \forall{q}\in \NN \quad \Delta_{q} v=\mathcal{F}^{-1}\big(\varphi(2^{-q}\cdot) \hat{v}\big) \quad \textnormal{and}\quad S_{q}v=\sum_{-1\le j\le q-1}\Delta_{j}v.
$$
We notice that these operators map continuously  $L^p$ to itself uniformly with respect to $q$ and $p$. Furthermore, one can easily check that for every tempered distribution $v$, we have
	$$
	v=\sum_{q\geq -1}\Delta_qv.
	$$
By choosing in a suitable way the support of $\varphi$ one can easily check the  almost orthogonality properties: for any $u,v\in\mathcal{S}'(\RR^2)$,
$$
\Delta_p\Delta_q u=0\quad\textnormal{if}\quad\vert p-q\vert\geq 2
$$
$$
\Delta_p(S_{q-1}u\Delta_qv)=0\quad\textnormal{if}\quad\vert p-q\vert\geq 5.
$$
We can now give a characterization of the H\"older spaces using the Littlewood-Paley  decomposition. 
\begin{definition}
For all  $s \in \RR$, we denote by  $C^s$  the space of tempered distributions $v$ such that
$$
\Vert v\Vert_{s}\triangleq\sup_{q\geq -1}2^{qs}\Vert\Delta_{q}v\Vert_{L^\infty}<+\infty.
$$
\end{definition}
\begin{remark}
We notice that  for any strictly positive non integer real number $s$ this definition  coincides with the usual   H\"{o}lder space $C^s$ with equivalent norms.
 For example if $s\in]0,1[$,
$$
\Vert v\Vert_{s}\lesssim \Vert v\Vert_{C^s}\triangleq\Vert v\Vert_{L^\infty}+\sup_{x\neq y}\frac{\vert v(x)-v(y)\vert}{\vert x-y\vert^s}\lesssim\Vert v\Vert_{s}.
$$  
\end{remark}
Next, we recall Bernstein inequalities, see for example \cite{C}.
\begin{lemma}\label{br}
There exists a constant $C > 0$ such that for all $q \in N, k \in \NN, 1\leq a\leq b\leq\infty$ and for every tempered
distribution $u$ we have
$$
\sup_{\vert\alpha\vert\leq k}\Vert\partial^\alpha S_q u\Vert_{L^b}\leq C^k2^{q(k+2(\frac{1}{a}-\frac{1}{b}))}\Vert S_qu\Vert_{L^a},
$$
$$
C^{-k}2^{qk}\Vert \dot{\Delta}_qu\Vert_{L^b}\leq\sup_{\vert\alpha\vert= k}\Vert\partial^\alpha \dot{\Delta}_qu\Vert_{L^b}\leq C^k2^{qk}\Vert \dot{\Delta}_qu\Vert_{L^b}.
$$
\end{lemma}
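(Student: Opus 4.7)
The plan is to reduce both inequalities to convolution estimates via Young's inequality, exploiting the frequency localization of $S_q u$ and $\dot\Delta_q u$.

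For the first estimate, I would pick a Schwartz function $h$ whose Fourier transform equals $1$ on a ball large enough to contain the support of $\chi$ and of all $\varphi(2^{-j}\cdot)$ with $j\le 0$, and set $h_q(x)=2^{2q}h(2^q x)$. Since $\widehat{S_q u}$ is supported in a ball of radius $\lesssim 2^q$, one has $S_q u=h_q* S_q u$, hence $\partial^\alpha S_q u=(\partial^\alpha h_q)*S_q u$. A direct scaling calculation gives
$$\|\partial^\alpha h_q\|_{L^r}=2^{q(|\alpha|+2(1-1/r))}\|\partial^\alpha h\|_{L^r},$$
and Young's inequality with $1/r=1-(1/a-1/b)\in[0,1]$ produces the first bound after taking the supremum over $|\alpha|\le k$.

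For the upper half of the second inequality the same scheme applies, with $h$ replaced by a Schwartz function $\tilde h$ whose Fourier transform equals $1$ on the annulus $\mathrm{supp}\,\varphi$, and with $a=b$ (so $r=1$). The lower half requires inverting derivatives modulo frequency support. I would construct smooth functions $(g_\alpha)_{|\alpha|=k}$, with Fourier transforms supported in a slightly enlarged annulus, satisfying
$$\sum_{|\alpha|=k}(i\xi)^\alpha \hat g_\alpha(\xi)=1 \quad \text{on } \mathrm{supp}\,\varphi,$$
which is feasible because $|\xi|^{2k}=\sum_{|\alpha|=k}\binom{k}{\alpha}\xi^{2\alpha}$ is bounded below there. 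Dilating via $(g_\alpha)_q(x)=2^{2q}g_\alpha(2^q x)$ then yields the representation
$$\dot\Delta_q u = 2^{-qk}\sum_{|\alpha|=k}(g_\alpha)_q*\partial^\alpha\dot\Delta_q u,$$
and a final application of Young's inequality in $L^b$ gives the claimed lower bound.

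The only delicate point is the bookkeeping of the constant $C^k$ uniformly in $k$ and in $\alpha$ with $|\alpha|\le k$. Since $h$, $\tilde h$ and the $g_\alpha$ may be chosen independently of $q$, $u$, $a$, $b$, a bound of the form $\|\partial^\alpha h\|_{L^r}\le C_1^{|\alpha|}$ holds with $C_1$ depending only on $h$, and similar bounds hold for $\tilde h$ and the $g_\alpha$; these combine into the factor $C^k$ displayed in the statement. No new idea beyond this scaling/Young strategy is needed, which is presumably why the paper simply refers the reader to \cite{C}.
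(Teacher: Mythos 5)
Your argument is correct and is exactly the standard scaling-plus-Young's-inequality proof that the paper implicitly relies on by citing Chemin's book \cite{C}: the paper gives no proof of its own, and the cited one proceeds precisely by convolving with a dilated Schwartz function whose Fourier transform is $1$ on the relevant ball (resp.\ annulus) and, for the reverse inequality, by inverting the derivatives on the annulus via the identity $|\xi|^{2k}=\sum_{|\alpha|=k}\frac{k!}{\alpha!}\xi^{2\alpha}$. The only points needing care are the ones you already flag, namely the uniform-in-$k$ bounds $\Vert\partial^{\alpha}h\Vert_{L^r}\leq C^{|\alpha|}$ and the count of multi-indices with $|\alpha|=k$, both of which are routine.
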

Now, we introduce the Bony's decomposition \cite{B} which is  the basic tool of the para-differential calculus. 
Formally the product of two tempered distributions  $u$ and $v$ is splitted  into three parts as follows:
$$
uv=T_uv+T_vu+R(u,v),
$$
where
$$
T_uv=\sum_qS_{q-1}u\Delta_qv\quad\textnormal{and}\quad R(u,v)=\sum_q\Delta_qu\tilde{\Delta}_qv,
$$
$$
\textnormal{with}\quad\tilde{\Delta}_q =\sum_{i=-1}^1\Delta_{q+i}.
$$

The following lemma clarifies the behavior of the paraproduct  operators in the H\"older spaces.
\begin{lemma}\label{pare}
Let $s$ be a  real number. If $s<0$ the bilinear operator $T$ is continuous from $L^\infty\times C^s$ in $C^{s}$ and from $C^s\times L^\infty $ in $C^{s}$. Moreover, we have 
$$
\Vert T_uv\Vert_{s}+\Vert T_vu\Vert_{s}\leq C\Vert u\Vert_{L^\infty}\Vert v\Vert_s.
$$
 If $s>0$ the remainder operator $R$ is  continuous  from $L^\infty\times C^s$ in $C^{s}$. Furthermore, we have
$$
\Vert R(u,v)\Vert_{s}\leq C\Vert u\Vert_{L^\infty}\Vert v\Vert_s.
$$
Where $C$ is a positive constant depending only on $s$. 
\end{lemma}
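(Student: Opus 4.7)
The plan is to argue by frequency localization using the two almost-orthogonality relations recalled just before the statement, together with the fact that the dyadic blocks $\Delta_j$ and $S_j$ act as uniformly bounded operators on $L^\infty$. Throughout I will repeatedly use the elementary bound $\Vert\Delta_q v\Vert_{L^\infty}\leq 2^{-qs}\Vert v\Vert_s$, which is immediate from the definition of $\Vert\cdot\Vert_s$.

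For the paraproduct $T_u v$, the identity $\Delta_p(S_{q-1}u\,\Delta_q v)=0$ whenever $|p-q|\geq 5$ reduces the problem to $\Delta_p T_u v = \sum_{|p-q|\leq 4}\Delta_p(S_{q-1}u\,\Delta_q v)$, from which $\Vert\Delta_p T_u v\Vert_{L^\infty}\lesssim \sum_{|p-q|\leq 4}\Vert u\Vert_{L^\infty}\Vert\Delta_q v\Vert_{L^\infty}\lesssim 2^{-ps}\Vert u\Vert_{L^\infty}\Vert v\Vert_s$ for any real $s$. For $T_v u$ the situation is subtler because the low-frequency factor $S_{q-1}v$ is no longer automatically in $L^\infty$; one uses
\begin{equation*}
\Vert S_{q-1}v\Vert_{L^\infty}\leq \sum_{j=-1}^{q-2}\Vert \Delta_j v\Vert_{L^\infty}\leq \Vert v\Vert_s\sum_{j=-1}^{q-2}2^{-js}\lesssim 2^{-qs}\Vert v\Vert_s,
\end{equation*}
and the crucial point is that the assumption $s<0$ makes the geometric series of ratio $2^{-s}>1$ dominated by its last term. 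Combined again with the uniform $L^\infty$-boundedness of $\Delta_q$ one deduces $\Vert\Delta_p T_v u\Vert_{L^\infty}\lesssim 2^{-ps}\Vert u\Vert_{L^\infty}\Vert v\Vert_s$, after restricting as before to the range $|p-q|\leq 4$.

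For the remainder $R(u,v)$ I would instead use a Fourier support argument: since both $\Delta_q u$ and $\tilde{\Delta}_q v$ have spectrum in a ball of radius $\sim 2^q$, the product $\Delta_q u\,\tilde{\Delta}_q v$ has spectrum in a ball of radius $\lesssim 2^q$, so $\Delta_p(\Delta_q u\,\tilde{\Delta}_q v)=0$ unless $q\geq p-N_0$ for some fixed integer $N_0$. This yields
\begin{equation*}
\Vert\Delta_p R(u,v)\Vert_{L^\infty}\lesssim \Vert u\Vert_{L^\infty}\Vert v\Vert_s\sum_{q\geq p-N_0}2^{-qs}\lesssim 2^{-ps}\Vert u\Vert_{L^\infty}\Vert v\Vert_s,
\end{equation*}
and the convergence of the geometric series of ratio $2^{-s}<1$ is precisely where the assumption $s>0$ enters. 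No individual step is really an obstacle; the only point worth emphasising is the complementary role of the sign of $s$: negativity is used to absorb the growing $L^\infty$-norm of the low-frequency projection appearing in $T_v u$, while positivity is used to force convergence of the high-frequency tail sum in $R$.
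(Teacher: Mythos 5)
Your proof is correct and is exactly the standard argument (spectral localization of the paraproduct and remainder blocks, plus the geometric-series dichotomy in the sign of $s$) that the paper implicitly relies on: the lemma is stated there without proof as a classical fact from Bony's paradifferential calculus, cf.\ Chemin's book. Nothing to add.
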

As a result, we have the following corollary.
\begin{coro}\label{ppxr0}
Let $\varepsilon\in]0,1[$, $X$ be a vector field belonging to $C^\varepsilon$ as well as its divergence and $f$  be a Lipschitz scalar function. Then for $j\in\{1,2\}$ we have
$$
\Vert (\partial_jX)\cdot\nabla f\Vert_{\varepsilon-1}\leq C\Vert \nabla f\Vert_{L^\infty}\big(\Vert \textnormal{div}X\Vert_\varepsilon+\Vert X\Vert_\varepsilon\big).
$$ 
\end{coro}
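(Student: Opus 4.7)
The plan is to expand $(\partial_j X)\cdot\nabla f=\sum_k(\partial_j X_k)(\partial_k f)$ and apply Bony's decomposition to each scalar product,
\[
(\partial_j X_k)(\partial_k f)=T_{\partial_j X_k}(\partial_k f)+T_{\partial_k f}(\partial_j X_k)+R(\partial_j X_k,\partial_k f).
\]
The natural regularities are $\partial_j X_k\in C^{\varepsilon-1}$, bounded by $\|X\|_{\varepsilon}$, and $\partial_k f\in L^\infty$, bounded by $\|\nabla f\|_{L^\infty}$. The two paraproducts will yield directly to Lemma~\ref{pare} since $\varepsilon-1<0$, but the remainder is critical: the regularity indices sum to $\varepsilon-1<0$, outside the scope of the classical remainder bound. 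This is precisely where the divergence assumption on $X$ must be exploited.

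The two paraproducts are handled at once by applying Lemma~\ref{pare} at the negative index $s=\varepsilon-1$, with $u=\partial_k f\in L^\infty$ and $v=\partial_j X_k\in C^{\varepsilon-1}$, which gives
\[
\|T_{\partial_k f}(\partial_j X_k)\|_{\varepsilon-1}+\|T_{\partial_j X_k}(\partial_k f)\|_{\varepsilon-1}\le C\|\nabla f\|_{L^\infty}\|X\|_{\varepsilon}.
\]

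The main obstacle is the remainder $\sum_k R(\partial_j X_k,\partial_k f)$. I would use that $\Delta_q$ commutes with differentiation and perform a double integration by parts on each dyadic summand: writing $(\partial_j a)b=\partial_j(ab)-a\,\partial_j b$, then swapping $\partial_j\partial_k=\partial_k\partial_j$, and using $a\,\partial_k c=\partial_k(ac)-(\partial_k a)c$ to expose the divergence of $X$. The mixed second derivative $\partial_j\partial_k\tilde\Delta_q f$ appearing in the intermediate step cancels by Schwarz, and summing over $k$ and $q$ produces the key identity
\[
\sum_k R(\partial_j X_k,\partial_k f)=\partial_j\!\sum_q(\Delta_q X)\cdot\nabla\tilde\Delta_q f\;-\;\dv\!\sum_q(\Delta_q X)\,\partial_j\tilde\Delta_q f\;+\;R(\dv X,\partial_j f).
\]
The last term is a standard Bony remainder of type $C^\varepsilon\times L^\infty$: Lemma~\ref{pare} now applies at the positive index $\varepsilon$ (using the symmetry of $R$) and yields the bound $\|\dv X\|_\varepsilon\|\nabla f\|_{L^\infty}$. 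For the first two terms I would use frequency localization: each summand has Fourier support in a ball of radius $\sim 2^q$ with $L^\infty$-norm $\lesssim 2^{-q\varepsilon}\|X\|_\varepsilon\|\nabla f\|_{L^\infty}$, so a geometric summation over the tail $q\gtrsim p$ yields $\|\Delta_p(\cdots)\|_{L^\infty}\lesssim 2^{-p\varepsilon}\|X\|_\varepsilon\|\nabla f\|_{L^\infty}$; hence each inner sum lies in $C^\varepsilon$, and the outer $\partial_j$ (resp.\ $\dv$) places it in $C^{\varepsilon-1}$ with the required bound. Collecting the three contributions closes the estimate. The crux of the argument is this Schwarz-type cancellation, which makes $\Delta_q(\dv X)$ appear and is what explains why the hypothesis $\dv X\in C^\varepsilon$ is indispensable.
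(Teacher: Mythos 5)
Your argument is correct and follows essentially the same route as the paper: Bony's decomposition, the paraproducts handled at the negative index $\varepsilon-1$, and the remainder rewritten via the identity $R(\partial_jX^i,\partial_if)=\partial_jR(X^i,\partial_if)-\partial_i R(X^i,\partial_jf)+R(\textnormal{div}\,X,\partial_jf)$, which is exactly your dyadic identity in disguise. The only difference is that you re-derive the positive-index remainder estimate by hand where the paper simply invokes Lemma~\ref{pare} again.
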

\begin{proof}
 In view of Bony's decomposition we write
\begin{eqnarray*}
 \Vert(\partial_jX)\cdot\nabla f\Vert_{\varepsilon-1}&\leq& \Vert T_{\partial_jX^i}\partial_if\Vert_{\varepsilon-1}+\Vert T_{\partial_if}\partial_jX^i\Vert_{\varepsilon-1}+\Vert R(\partial_jX^i,\partial_if)\Vert_{\varepsilon-1},
\end{eqnarray*}
where we have adopted in the right-hand side of the last inequality the Einstein summation convention for the index $i$.
Since $\varepsilon-1<0$ the previous lemma ensures that
$$
\Vert T_{\partial_jX^i}\partial_if\Vert_{ \varepsilon-1}+\Vert T_{\partial_if}\partial_jX^i\Vert_{ \varepsilon-1}\leq C \Vert\nabla f \Vert_{ L^{\infty}}\Vert X\Vert_{ \varepsilon}.
$$
For the remainder term we write
$$
R(\partial_jX^i,\partial_if)=\partial_jR(X^i,\partial^if)-\partial_i R(X^i,\partial_jf)+R(\textnormal{div}\, X,\partial_jf).
$$ 
Using once again  Lemma \ref{pare} we get
\begin{eqnarray*}
\Vert R(\partial_jX^i,\partial_if)\Vert_{\varepsilon-1}&\lesssim & \Vert R(X^i,\partial^if)\Vert_{\varepsilon}+\Vert R(X,\partial_jf)\Vert_{\varepsilon}+\Vert R(\textnormal{div}\, X,\partial_jf)\Vert_{\varepsilon}\\ &\lesssim & \Vert\nabla f \Vert_{ L^{\infty}}\Vert X\Vert_{\varepsilon}+\Vert\nabla f \Vert_{L^{\infty}}\Vert\textnormal{div}\, X\Vert_{ \varepsilon}.
\end{eqnarray*}
This concludes the proof of the corollary.
\end{proof}

In the  next section we will need  the following result dealing with the H\"olderian regularity  persistence for the transport equations. Its proof is given in page 66 from \cite{C}.
\begin{lemma}\label{lem1}
Let $v$ be a smooth  divergence-free vector field and let $r \in ]-1,1[$.
Let us consider $(f,g)$ a couple of functions belonging to $L^\infty_{loc}(\RR, C^r)\times L^1_{loc}(\RR, C^r)$ and such that
$$
\partial_t f+v\cdot\nabla f=g.
$$
Then we have
\begin{equation}
\Vert f(t)\Vert_{r}\lesssim  \Vert f(0)\Vert_{r}e^{C\int_0^t\Vert\nabla v(\tau)\Vert_{L^\infty}d\tau}+\int_0^t\quad\Vert g(\tau)\Vert_{r}e^{C\int_\tau^t\Vert\nabla v(\sigma)\Vert_{L^\infty}d\sigma}d\tau
\end{equation}
The constant $C$ depends only on $r$.
\end{lemma}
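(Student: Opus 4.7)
The plan is to follow the standard Littlewood--Paley/Bony strategy: localize the transport equation in frequency, use that the flow preserves $L^\infty$ (since $\dv v=0$) to propagate each dyadic block, reduce matters to a commutator estimate, and finally apply Gronwall.

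First I would apply $\Delta_q$ to the equation to obtain
$$
(\partial_t+v\cdot\nabla)\Delta_q f=\Delta_q g+R_q,\qquad R_q\triangleq [v\cdot\nabla,\Delta_q]f.
$$
Since $v$ is smooth and divergence-free, the associated flow $\psi(t,\cdot)$ is a measure-preserving diffeomorphism, so composing with $\psi$ and integrating in time yields the pointwise bound
$$
\Vert \Delta_q f(t)\Vert_{L^\infty}\leq \Vert \Delta_q f(0)\Vert_{L^\infty}+\int_0^t\Vert \Delta_q g(\tau)\Vert_{L^\infty}d\tau+\int_0^t\Vert R_q(\tau)\Vert_{L^\infty}d\tau.
$$
Multiplying by $2^{qr}$ and taking the supremum over $q\geq -1$, everything reduces to proving the commutator estimate
$$
\sup_{q\geq -1}2^{qr}\Vert R_q\Vert_{L^\infty}\leq C\,\Vert\nabla v\Vert_{L^\infty}\Vert f\Vert_r,
$$
valid for $r\in(-1,1)$; plugging this into the previous line and invoking Gronwall will give the announced inequality.

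The main obstacle is therefore the commutator estimate, and this is where the hypothesis $r\in(-1,1)$ enters. I would decompose $v\cdot\nabla f$ by Bony's formula as $T_{v^j}\partial_j f+T_{\partial_j f}v^j+R(v^j,\partial_j f)$, so that
$$
R_q=[T_{v^j},\Delta_q]\partial_j f-\Delta_q\bigl(T_{\partial_j f}v^j+R(v^j,\partial_j f)\bigr)+T_{v^j}\partial_j\Delta_q f-(v\cdot\nabla\Delta_q f-T_{v^j}\partial_j\Delta_q f).
$$
The paraproduct piece $[T_{v^j},\Delta_q]\partial_j f$ is the delicate one: writing $\Delta_q$ as convolution with $h_q(x)=2^{2q}h(2^q x)$ and applying a first-order Taylor expansion of $h_q$, the quasi-orthogonality $\Delta_q(S_{p-1}v\,\Delta_p\partial_j f)=0$ for $|p-q|\geq 5$ restricts the sum to $|p-q|\leq 4$ and produces the gain $2^{-q}\Vert\nabla v\Vert_{L^\infty}$, compensating the derivative on $f$ to give a bound $\lesssim \Vert\nabla v\Vert_{L^\infty}\Vert f\Vert_r\,2^{-qr}$ valid for every $r\in\RR$. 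The remaining pieces are handled by Lemma~\ref{pare} together with the divergence-free identity $R(v^j,\partial_j f)=\partial_j R(v^j,f)$ (so only $\Vert\nabla v\Vert_{L^\infty}$ is used and the result is at regularity $r$): the remainder term $R(v^j,\partial_j f)$ requires $r>-1$ to be well-defined, whereas the paraproduct $T_{\partial_j f}v^j$ forces $r<1$ so that $\partial_j f\in C^{r-1}$ with $r-1<0$ may be used as the low-regularity factor. Combining all contributions yields the commutator estimate, and Gronwall applied to the resulting inequality for $\|f(t)\|_r$ completes the proof.
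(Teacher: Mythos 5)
Your strategy is exactly the one behind this lemma: the paper does not prove it but refers to page 66 of Chemin's book \cite{C}, and the argument there is precisely the frequency localization, the $L^\infty$ bound along the flow for each dyadic block, the Bony splitting of the commutator $[v\cdot\nabla,\Delta_q]f$, and Gronwall, with $r\in(-1,1)$ entering through the paraproduct $T_{\partial_jf}v^j$ (which needs $r<1$) and the remainder $R(v^j,\partial_jf)=\partial_jR(v^j,f)$ (which needs $r>-1$). Your identification of where each hypothesis is used is correct.

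Two small repairs are needed. First, your displayed decomposition of $R_q$ is mis-assembled: summing your four terms gives $3T_{v^j}\partial_j\Delta_qf-\Delta_q(v\cdot\nabla f)-v\cdot\nabla\Delta_qf$ rather than $v\cdot\nabla\Delta_qf-\Delta_q(v\cdot\nabla f)$. The correct identity is
\begin{equation*}
R_q=[T_{v^j},\Delta_q]\partial_jf-\Delta_q\bigl(T_{\partial_jf}v^j+R(v^j,\partial_jf)\bigr)+\bigl(v\cdot\nabla\Delta_qf-T_{v^j}\partial_j\Delta_qf\bigr).
\end{equation*}
Second, the leftover piece $v\cdot\nabla\Delta_qf-T_{v^j}\partial_j\Delta_qf=T_{\partial_j\Delta_qf}v^j+R(v^j,\partial_j\Delta_qf)$ appears in your decomposition but is never estimated; it does satisfy the required bound $\lesssim 2^{-qr}\Vert\nabla v\Vert_{L^\infty}\Vert f\Vert_r$ for every $r$, because the spectral localization of $\Delta_qf$ restricts the sums to $q'\gtrsim q$ where $\Vert\Delta_{q'}v\Vert_{L^\infty}\lesssim 2^{-q'}\Vert\nabla v\Vert_{L^\infty}$, but this one line should be said. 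With these corrections the proof is complete and coincides with the cited one.
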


\quad Next, we notice that if  $v$ is  divergence-free and  decaying at infinity then it can be recovered from  its  vorticity $\omega\triangleq \textnormal{rot} v$ by means of the Biot-Savart law 
\begin{equation}\label{b-s}
v(x)=\frac{1}{2\pi}\int_{\RR^2}\frac{(x-y)^\perp}{\vert x-y\vert^2}\omega(y)dy.
\end{equation}
Now we briefly recall the  Calder\'on-Zygmund estimate that will be frequently used through this paper.
\begin{proposition} There exists a positive constant $C$ satisfying the following property. For any  smooth divergence-free vector field $v$ with vorticity $\omega\in L^p$ and   $p\in]1,\infty[$ one has
\begin{equation}\label{c-z}
\Vert \nabla v\Vert_{L^p}\leq C\dfrac{p^2}{p-1}\Vert \omega\Vert_{L^p}.
\end{equation}
\end{proposition}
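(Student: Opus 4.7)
The plan is to reduce the estimate to the $L^p$-boundedness of Calderón-Zygmund operators, specifically Riesz transforms, with careful tracking of the operator norm's dependence on $p$.

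First, I would differentiate the Biot-Savart formula \eqref{b-s}. Since the kernel $K(x)=\tfrac{1}{2\pi}\tfrac{x^\perp}{|x|^2}$ is homogeneous of degree $-1$ and smooth away from the origin, each partial derivative $\partial_j v^i$ can be written (in the sense of principal value, plus a harmless delta-function contribution from the local singularity) as a convolution against a kernel $K_{ij}(x)$ homogeneous of degree $-2$ whose restriction to the unit circle has mean zero. In other words, $\partial_j v^i = R_{ij}\omega$ where each $R_{ij}$ is a Calderón-Zygmund operator of convolution type (essentially a composition of two Riesz transforms, possibly combined with the identity), bounded on $L^2$ by Plancherel with norm at most $1$.

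Next, I would invoke the classical Calderón-Zygmund theorem, which from the above kernel properties yields that each $R_{ij}$ is of weak type $(1,1)$, with a constant that is absolute (independent of $p$). Combining this weak type $(1,1)$ bound with the $L^2$ bound via the Marcinkiewicz interpolation theorem produces an $L^p$ estimate for $1<p\leq 2$ with operator norm controlled by $\frac{Cp}{p-1}$. For the range $2\leq p<\infty$, I would apply duality: the adjoint $R_{ij}^\ast$ is of the same Calderón-Zygmund type, hence satisfies the $L^{p'}$ bound $\frac{Cp'}{p'-1}=Cp$ for $p'\in(1,2]$, which gives the $L^p$ bound by duality.

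Collecting both regimes, one obtains a uniform bound by $C\max\!\left(\frac{p}{p-1},p\right)$, and the simple estimate $\max\!\left(\frac{p}{p-1},p\right)\leq \frac{p^2}{p-1}$ for all $p\in(1,\infty)$ yields the claimed inequality \eqref{c-z}. The only non-routine issue is the sharp tracking of constants through Marcinkiewicz interpolation; this is the step where the factor $\frac{1}{p-1}$ near $p=1$ and the factor $p$ near $p=\infty$ both enter, and it is essential not to lose the linear-in-$p$ growth when dualizing. Everything else is a direct application of standard singular integral theory to the Biot-Savart kernel.
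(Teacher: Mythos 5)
The paper offers no proof of this proposition: it is recalled as the classical Calder\'on--Zygmund estimate, and your argument is precisely the standard proof of that classical fact, so there is nothing to compare against and your route is the intended one. The reasoning is correct: on the Fourier side $\partial_j v^i$ is $\hat\omega$ multiplied by $-\xi_j\xi_{i}^{\perp}/|\xi|^2$, i.e.\ a double Riesz transform (plus a multiple of the identity), and the $L^2$ bound, the weak $(1,1)$ bound, interpolation, and duality give $\|\nabla v\|_{L^p}\leq C\max\bigl(p,(p-1)^{-1}\bigr)\|\omega\|_{L^p}\leq C\frac{p^2}{p-1}\|\omega\|_{L^p}$. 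The one point you rightly flag deserves a word of care: naive Marcinkiewicz interpolation between weak $(1,1)$ and $(2,2)$ produces a constant that also degenerates at the upper endpoint $p=2$, so to get the clean $C/(p-1)$ bound on all of $(1,2]$ one should, say, use Marcinkiewicz only on $(1,3/2]$ and then Riesz--Thorin between $L^{3/2}$ and $L^{2}$ on $[3/2,2]$; this is routine and does not affect the conclusion.
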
 
\quad
 In order to extend the results stated in the introduction to various geometries, we shall introduce some useful notations and definitions. Namely, we define
 the anisotropic Besov spaces with respect to slight smooth vector fields. This approach has been initially  developed by J.-Y. Chemin in \cite{C} in order to treat the vortex patch problem for the incompressible Euler system.
\begin{definition}\label{def1}
 Let $\Sigma$ be a closed set of the plane and $\varepsilon\in (0,1).$ Let 
  $X=(X_\lambda)_{\lambda\in \Lambda}$ be a family  of vector fields. We say that this family is admissible of class   $C^\varepsilon$ outside $\Sigma$ if and only if :
  \begin{enumerate}
  \item Regularity:
  $X_\lambda , \textnormal{div }X_\lambda\in C^\varepsilon$. 
 \item Non degenracy: 
$$
I(\Sigma,X)\triangleq \inf_{x\notin\Sigma}\sup_{\lambda\in\Lambda}\vert X_\lambda(x)\vert>0.
$$
\end{enumerate}
We set
$$
\tilde{\Vert} X_\lambda\Vert_{\varepsilon}\triangleq\Vert X_\lambda\Vert_{\varepsilon}+\Vert \textnormal{div}X_\lambda\Vert_{ \varepsilon-1},
$$
and 
$$
N_\varepsilon(\Sigma,X)\triangleq\sup_{\lambda\in\Lambda}\frac{\tilde{\Vert} X_\lambda\Vert_{\varepsilon}}{I(\Sigma,X)}.
$$
\end{definition}
For each element  $X_\lambda$ of the preceding family we define its action on bounded  real-valued functions $u$ in the weak sense as follows:
$$
\partial_{X_\lambda}u\triangleq \textnormal{div}(u\, X_\lambda)-u\, \textnormal{div}X_\lambda.
$$\begin{definition}\label{def2}
Let $\varepsilon\in (0,1),\,k\in\NN$  and $\Sigma$ be a closed set of the plane. 
 Consider a family of vector fields $X=(X_\lambda)_\lambda$  as in the Definition $\ref{def1}.$ We denote by $C^{\varepsilon+k}(\Sigma, X)$ the space of functions $u\in W^{k,\infty}$  such that  
$$
\sum_{\vert \alpha\vert\leq k} \Vert  \partial^\alpha u\Vert_{L^\infty}+\sup_{\lambda\in\Lambda}\Vert \partial_{X_\lambda}u\Vert_{\varepsilon+k-1}<+\infty,
$$
and we set
\begin{equation*}
\Vert u\Vert^{\varepsilon+k}_{\Sigma,X}\triangleq  N_\varepsilon(\Sigma,X)\sum_{\vert \alpha\vert\leq k} \Vert  \partial^\alpha u\Vert_{L^\infty}+\displaystyle{\sup_{\lambda\in\Lambda}}\, \frac{\Vert \pxl u\Vert_{\varepsilon+k-1}}{I(\Sigma,X)}.
\end{equation*}
\end{definition}
\begin{remark}
When $\Sigma$ is empty we will  merely say  that the set of vector fields $(X_\lambda)_{\lambda\in\Lambda}$ is admissible and to make the notation less cluttered we shall withdraw the symbol $\Sigma$ from the previous definitions. For example,  we use simply  $I(X)$ instead of $I(X,\Sigma)$and $\Vert .\Vert^{\varepsilon+k}_{X}$ instead of  $\Vert .\Vert^{\varepsilon+k}_{\Sigma,X}$. 
\end{remark}
\quad The next result deals with a logarithmic estimate established  in \cite{C} which is the main  key in the study of the generalized vortex patches.\begin{theorem}\label{propoo1}
There exists an absolute constant $C $ such that for any $a\in (1,\infty), \varepsilon\in (0,1)$ we have the following property. 
Let  $\Sigma$ be a closed set of the plane and $X$ be a  family  of vector fields as in Definition $\ref{def1}$. Consider a function $\omega \in C^\varepsilon (\Sigma,X)\cap L^a$. Let $v$ be the divergence-free vector field with vorticity $\omega$, then we get:
\begin{equation*}\label{t1}
\Vert\nabla v\Vert_{L^\infty(\Sigma)}\leq Ca\Vert\omega\Vert_{L^a}+\frac{C}{\varepsilon}\Vert\omega\Vert_{L^\infty}\log\Big(e+\frac{\Vert\omega\Vert^{\varepsilon}_{\Sigma,X}}{\Vert\omega\Vert_{L^\infty}}\Big).
\end{equation*}
\end{theorem}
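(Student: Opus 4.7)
The plan is a Chemin-type logarithmic estimate combining a Littlewood--Paley decomposition of $\omega$ with a pointwise frozen-coefficient argument that exploits the tangential regularity encoded by the family $X$. Fix $x\notin\Sigma$; by the definition of $I(\Sigma,X)$ there exists $\lambda_0=\lambda_0(x)$ with $\vert X_{\lambda_0}(x)\vert\ge \tfrac12 I(\Sigma,X)$. Fix also a cutoff $N\in\NN$, to be optimized at the end, and decompose
$$
\nabla v(x)=\Delta_{-1}\nabla v(x)+\sum_{0\le q<N}\Delta_q\nabla v(x)+\sum_{q\ge N}\Delta_q\nabla v(x).
$$
For the first block I would combine Bernstein (Lemma \ref{br}) with the Calder\'on--Zygmund estimate \eqref{c-z} to obtain $\Vert\Delta_{-1}\nabla v\Vert_{L^\infty}\lesssim a\Vert\omega\Vert_{L^a}$; the factor $a$ tracks the $p^2/(p-1)$ dependence of \eqref{c-z} when the exponent is taken slightly above $a$. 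For the intermediate block each $\Delta_q\nabla v$ is a Calder\'on--Zygmund image of $\Delta_q\omega$, hence $\Vert\Delta_q\nabla v\Vert_{L^\infty}\lesssim \Vert\omega\Vert_{L^\infty}$, and summation over $0\le q<N$ gives $\lesssim N\Vert\omega\Vert_{L^\infty}$.

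The heart of the argument is the high-frequency block, where a bare $L^\infty$ bound on $\Delta_q\omega$ would make the sum diverge and one must genuinely use the tangential regularity. The target is the pointwise estimate
$$
\vert\Delta_q\omega(x)\vert\lesssim 2^{-q\varepsilon}\Vert\omega\Vert^\varepsilon_{\Sigma,X}\qquad(q\ge 0),
$$
obtained by a frozen-coefficient integration by parts along $X_{\lambda_0}$. Since $\varphi$ vanishes at the origin, the convolution kernel $h_q$ of $\Delta_q$ has zero mean and can be written as $h_q=\textnormal{div}\,H_q$ with $\Vert H_q\Vert_{L^1}\lesssim 2^{-q}$; an integration by parts then yields $\Delta_q\omega(x)=\int H_q(x-y)\cdot\nabla\omega(y)\,dy$. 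I would then insert the frozen direction $X_{\lambda_0}(x)$ through the decomposition
$$
X_{\lambda_0}(x)\cdot\nabla\omega(y)=\partial_{X_{\lambda_0}}\omega(y)+\bigl(X_{\lambda_0}(x)-X_{\lambda_0}(y)\bigr)\cdot\nabla\omega(y),
$$
performing a second integration by parts on the error term to trade $\nabla\omega$ against $\omega$ at the cost of $\Vert X_{\lambda_0}\Vert_\varepsilon$ and $\Vert\textnormal{div}\,X_{\lambda_0}\Vert_{\varepsilon-1}$, in the spirit of Corollary \ref{ppxr0}. Using the lower bound $\vert X_{\lambda_0}(x)\vert\ge I(\Sigma,X)/2$ and the characterization $\Vert\Delta_q(\partial_{X_{\lambda_0}}\omega)\Vert_{L^\infty}\le 2^{q(1-\varepsilon)}\Vert\partial_{X_{\lambda_0}}\omega\Vert_{\varepsilon-1}$, the three contributions (tangential derivative, H\"older norm of $X_{\lambda_0}$, and divergence of $X_{\lambda_0}$) assemble into $\Vert\omega\Vert^\varepsilon_{\Sigma,X}$. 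Passing from $\Delta_q\omega$ to $\Delta_q\nabla v$ by Calder\'on--Zygmund and summing $\sum_{q\ge N}2^{-q\varepsilon}\lesssim \varepsilon^{-1}2^{-N\varepsilon}$ controls the high-frequency block by $\varepsilon^{-1}2^{-N\varepsilon}\Vert\omega\Vert^\varepsilon_{\Sigma,X}$.

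Adding the three contributions gives, at the fixed point $x$,
$$
\vert\nabla v(x)\vert\lesssim a\Vert\omega\Vert_{L^a}+N\Vert\omega\Vert_{L^\infty}+\frac{2^{-N\varepsilon}}{\varepsilon}\Vert\omega\Vert^\varepsilon_{\Sigma,X}.
$$
Choosing $N$ as the smallest integer such that $2^{N\varepsilon}\ge 1+\Vert\omega\Vert^\varepsilon_{\Sigma,X}/\Vert\omega\Vert_{L^\infty}$ balances the last two terms and produces the advertised $\varepsilon^{-1}\Vert\omega\Vert_{L^\infty}\log\bigl(e+\Vert\omega\Vert^\varepsilon_{\Sigma,X}/\Vert\omega\Vert_{L^\infty}\bigr)$; taking the supremum over $x\notin\Sigma$ yields the claimed bound.

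The main obstacle is the pointwise high-frequency estimate on $\Delta_q\omega(x)$. The non-constancy of $X_{\lambda_0}$ inside the convolution forces a Taylor-type expansion plus a secondary integration by parts, and one must carefully track how the regularity losses from $\Vert X_{\lambda_0}\Vert_\varepsilon$, $\Vert\textnormal{div}\,X_{\lambda_0}\Vert_{\varepsilon-1}$ and $\Vert\partial_{X_{\lambda_0}}\omega\Vert_{\varepsilon-1}$ combine into exactly the single quantity $\Vert\omega\Vert^\varepsilon_{\Sigma,X}$ of Definition \ref{def2}, with the non-degeneracy factor $I(\Sigma,X)$ appearing in the correct place in the denominator.
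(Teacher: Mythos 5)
The paper does not actually prove this statement (it is quoted from Chemin's book \cite{C}), so the comparison is with the argument there. Your global architecture --- Littlewood--Paley splitting into $\Delta_{-1}$, a block of $N$ intermediate frequencies contributing $N\Vert\omega\Vert_{L^\infty}$, a high-frequency block gaining $2^{-q\varepsilon}$ against $\Vert\omega\Vert^{\varepsilon}_{\Sigma,X}$, and optimization in $N$ --- is the right one, and the low/mid blocks and the final optimization are fine. The fatal problem is the claimed pointwise estimate $\vert\Delta_q\omega(x)\vert\lesssim 2^{-q\varepsilon}\Vert\omega\Vert^{\varepsilon}_{\Sigma,X}$ for $x\notin\Sigma$. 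This is false: take $\omega={\bf 1}_{\Omega_0}$ a smooth patch, $\Sigma=\emptyset$ and the adapted family with $\partial_{X_\lambda}\omega=0$, so that your right-hand side tends to $0$ as $q\to\infty$; yet $\Vert\Delta_q{\bf 1}_{\Omega_0}\Vert_{L^\infty}$ stays bounded below, since a characteristic function belongs to no $C^{s}$ with $s>0$, and the bad points sit at distance $\sim 2^{-q}$ from $\partial\Omega_0$, hence in $\Sigma^c$. Tangential regularity does not improve $\omega$ itself; it only improves the particular singular integrals $\nabla\nabla^\perp\Delta^{-1}\omega$ composing $\nabla v$. The failure is visible inside your own scheme: after writing $\Delta_q\omega(x)=\int H_q(x-y)\cdot\nabla\omega(y)\,dy$ you face the \emph{full} gradient of $\omega$, of which only the component along $X_{\lambda_0}$ is controlled; there is no identity converting $H_q(x-y)\cdot\nabla\omega(y)$ into a multiple of $X_{\lambda_0}(x)\cdot\nabla\omega(y)$, and the transverse component is genuinely of size $2^{q}\Vert\omega\Vert_{L^\infty}$. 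A second, independent gap is the phrase ``passing from $\Delta_q\omega$ to $\Delta_q\nabla v$ by Calder\'on--Zygmund'': this is not a pointwise operation, and since a good bound on $\Delta_q\omega$ could at best hold on $\Sigma^c$, the uncontrolled values on $\Sigma$ (which has nonempty interior in the application to the fattened sets $\Sigma_h$ of Section 4) would contaminate $\Delta_q\nabla v(x)$ for $x$ near $\Sigma$.

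The correct high-frequency step freezes the direction in the symbol of $\nabla v$ rather than in $\omega$. With $Y_0=X_{\lambda_0}(x)$ and $e=Y_0/\vert Y_0\vert$ one has the algebraic identity $\xi_i\xi_j/\vert\xi\vert^2=e^\perp_ie^\perp_j+(\xi\cdot e)(\xi\cdot m_{ij})/\vert\xi\vert^2$ with $\vert m_{ij}\vert\leq 2$, whence $\Delta_q\partial_i\partial_j\Delta^{-1}\omega=e^\perp_ie^\perp_j\,\Delta_q\omega+\Delta_q\partial_{m_{ij}}\Delta^{-1}(\partial_e\omega)$. The first term telescopes over $q\geq N$ and contributes $C\Vert\omega\Vert_{L^\infty}$ in total; in the second one substitutes $\vert Y_0\vert\partial_e\omega=\partial_{X_{\lambda_0}}\omega+\mathrm{div}\big((Y_0-X_{\lambda_0})\omega\big)+\omega\,\mathrm{div}X_{\lambda_0}$ and estimates each piece pointwise at $x$ through the rapidly decaying kernel of the order $-1$ operator $\Delta_q\partial_{m_{ij}}\Delta^{-1}$: the first piece yields $2^{-q\varepsilon}\Vert\partial_{X_{\lambda_0}}\omega\Vert_{\varepsilon-1}$, the second $2^{-q\varepsilon}\Vert X_{\lambda_0}\Vert_{\varepsilon}\Vert\omega\Vert_{L^\infty}$ (the increment $\vert X_{\lambda_0}(x)-X_{\lambda_0}(y)\vert$ is paid against $\vert x-y\vert^{\varepsilon}$ inside the convolution, exactly the computation you had in mind but applied to the right kernel), and the third $2^{-q\varepsilon}\Vert\mathrm{div}\,X_{\lambda_0}\Vert_{\varepsilon}\Vert\omega\Vert_{L^\infty}$; dividing by $\vert Y_0\vert\geq I(\Sigma,X)/2$ assembles $\Vert\omega\Vert^{\varepsilon}_{\Sigma,X}$. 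This symbol-level identity is the ingredient your proposal is missing, and it cannot be replaced by any estimate on $\Delta_q\omega$ alone.
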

%In the next sections we will need  the following definition
%%%%%%%%%%%%%%%%%%%%%%%%%%%%%%%%%%%%%%%%%%%%%%%%%%%%%%%%%%%%%%%%%%%%%%%%%%%%%%%%%%%%%%%
\section{Smooth  patches}
%%%%%%%%%%%%%%%%%%%%%%%%%%%%%%%%%%%%%%%%%%%%%%%%%%%%%%%%%%%%%%%%%%%%%%%%%%%%%%%%%%%%%%%
\quad In this section we shall state a  local well-posedness result  for the system \eqref{B} with general initial data covering the result of  Theorem \ref{the}. 
 The main result of this section is the following.
\begin{theorem}\label{th1}
Let $0<\varepsilon<1,a\in (1,\infty)$ and    $X_0=(X_{0,\lambda})_{\lambda\in\Lambda}$ be an  admissible family  of vector fields of class $C^\varepsilon$. Let $v_0$ be  a divergence-free vector field  whose vorticity $\omega_0$ belongs to $L^{a}\cap C^{\varepsilon}(X_0)$ and  $\rho_0$ be a real-valued  function belonging to $L^2\cap C^{\varepsilon+1}(X_0)$ with $\nabla\rho_0\in L^a$.
  Then there exists $T>0$ such that  the inviscid  Boussinesq system \eqref{B} admits a unique solution $(v,\rho)\in L^\infty_{loc}\big([0,T], Lip(\RR^2)\big)\times L^\infty_{loc}\big([0,T], Lip(\RR^2)\cap L^2\big)$ such that $\omega\in L^\infty\big([0,T], L^a\cap L^\infty\big)$. Moreover, for all $t\in[0, T]$ the transported $X_t$ of $X_0$ by the flow $\psi$,  defined by
\begin{equation}\label{p1}
X_{t,\lambda}(x)\triangleq \big(\pxzl\psi(t)\big)(\psi^{-1}(t,x)),
\end{equation}
 is admissible of class $C^\varepsilon$ and
 $$
 \rho(t)\in C^{\varepsilon+1}(X_t)\quad \textnormal{and}\quad \omega(t)\in C^{\varepsilon}(X_t).
 $$
In addition, 
 $$
T\geq \frac{1}{C_0}\log\bigg(1+C_0\log\Big(1+C_0/\Vert\nabla\rho_0\Vert_{L^\infty}\Big)\bigg)\triangleq T_0
$$
where $C_0\triangleq C_0(\omega_0,\rho_0)$ depends continuously on the norms of the initial data
\end{theorem}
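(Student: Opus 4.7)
The natural approach is to build solutions by an approximation scheme (smooth out the initial data by Friedrichs mollification to obtain a sequence $(v_n,\rho_n)$ of smooth solutions defined on a maximal interval, derive uniform bounds on a fixed time interval $[0,T_0]$ in the anisotropic norms of the statement, and pass to the limit). The technical heart of the proof is the derivation of these uniform a priori bounds. I would work formally on $(v,\rho)$ and rely on four coupled quantities: $\|\omega(t)\|_{L^a\cap L^\infty}$, $\|\nabla\rho(t)\|_{L^\infty}$, the tangential H\"older norms $\|\partial_{X_{t,\lambda}}\rho(t)\|_\varepsilon$ and $\|\partial_{X_{t,\lambda}}\omega(t)\|_{\varepsilon-1}$, and the admissibility constants $I(X_t)$, $N_\varepsilon(X_t)$ of the transported family. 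The Lipschitz norm of $v$ is controlled through the logarithmic estimate of Theorem~\ref{propoo1}, and the goal is to close a Gronwall-type loop that produces the claimed double logarithmic lower bound on $T$.

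\textbf{Transported quantities.} Since $\rho$ is transported, $\|\rho(t)\|_{L^2\cap L^\infty}=\|\rho_0\|_{L^2\cap L^\infty}$. Differentiating the density equation gives $(\partial_t+v\cdot\nabla)\partial_j\rho=\partial_jv\cdot\nabla\rho$, hence the Gronwall bound
$$\|\nabla\rho(t)\|_{L^\infty\cap L^a}\le \|\nabla\rho_0\|_{L^\infty\cap L^a}\exp\Big(C\int_0^t\|\nabla v(\tau)\|_{L^\infty}d\tau\Big).$$
The key observation is that $\partial_{X_{t,\lambda}}$ commutes with $\partial_t+v\cdot\nabla$ when $X_t$ satisfies \eqref{tr71}, so $\partial_{X_{t,\lambda}}\rho$ is \emph{transported} without source. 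Applying Lemma~\ref{lem1} with $r=\varepsilon\in(0,1)$ yields
$$\|\partial_{X_{t,\lambda}}\rho(t)\|_\varepsilon\le \|\partial_{X_{0,\lambda}}\rho_0\|_\varepsilon\, e^{C\int_0^t\|\nabla v\|_{L^\infty}}.$$
For the vector fields themselves, one checks (using $\dv v=0$) that $\dv X_{t,\lambda}$ is transported and $\|X_{t,\lambda}\|_\varepsilon$ obeys a similar exponential bound via \eqref{tr71}; the non-degeneracy $I(X_t)$ is preserved using the flow identity \eqref{p1} and standard estimates on $\psi,\psi^{-1}$. Thus $X_t$ stays admissible and $N_\varepsilon(X_t)$ is controlled by the exponential of $\int_0^t\|\nabla v\|_{L^\infty}$.

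\textbf{Vorticity estimates and closing the loop.} From the vorticity equation in \eqref{omega}, $\|\omega(t)\|_{L^a\cap L^\infty}\le \|\omega_0\|_{L^a\cap L^\infty}+\int_0^t\|\nabla\rho\|_{L^a\cap L^\infty}$. Commuting $\partial_{X_{t,\lambda}}$ with the transport operator gives
$$(\partial_t+v\cdot\nabla)\partial_{X_{t,\lambda}}\omega=\partial_1(\partial_{X_{t,\lambda}}\rho)+[\partial_{X_{t,\lambda}},\partial_1]\rho.$$
The first term is bounded in $C^{\varepsilon-1}$ by $\|\partial_{X_{t,\lambda}}\rho\|_\varepsilon$, which we already control. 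The commutator equals $-(\partial_1 X_{t,\lambda})\cdot\nabla\rho$, which Corollary~\ref{ppxr0} estimates by $\|\nabla\rho\|_{L^\infty}\bigl(\|X_{t,\lambda}\|_\varepsilon+\|\dv X_{t,\lambda}\|_\varepsilon\bigr)$. Lemma~\ref{lem1} with $r=\varepsilon-1$ then provides a bound on $\|\partial_{X_{t,\lambda}}\omega(t)\|_{\varepsilon-1}$ in terms of an integral of $\|\nabla v\|_{L^\infty}$ and the previously controlled quantities. Plugging everything into Theorem~\ref{propoo1} yields, for some continuous functional $F$ of the initial data,
$$\|\nabla v(t)\|_{L^\infty}\le C_0\Big(1+\log\big(e+U(t)\big)\Big),\qquad U(t)\triangleq e^{C\int_0^t\|\nabla v\|_{L^\infty}}+\|\nabla\rho(t)\|_{L^\infty}.$$
Combined with the Gronwall bound on $\|\nabla\rho\|_{L^\infty}$, this becomes a closed differential inequality of the form $U'\le C_0\, U\log(e+U)\cdot(1+\log(e+U))$, whose solution stays finite up to the double logarithmic time $T_0$ stated in the theorem.

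\textbf{Convergence, uniqueness, and the main obstacle.} Once uniform bounds on $[0,T_0]$ are in hand, convergence of the approximating sequence follows from an $L^2$ stability estimate: for the difference $(\delta v,\delta\rho)$ of two solutions, an energy inequality closed via the log-Lipschitz modulus of continuity of $v$ (à la Yudovich, incorporating the source $\delta\rho\,\vec e_2$ handled through $\|\nabla\rho\|_{L^\infty}$) gives an Osgood-type uniqueness. Standard compactness upgrades this to existence in the anisotropic spaces of the statement, and the persistence of $X_t$-admissibility yields the $C^{1+\varepsilon}$ regularity of $\psi(t,\Omega_0)$ in the vortex patch case by choosing $X_0$ tangent to $\partial\Omega_0$. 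The main obstacle is the super-exponential feedback between $\|\nabla v\|_{L^\infty}$ and $\|\nabla\rho\|_{L^\infty}$: unlike the Euler case, $\|\nabla\rho\|_{L^\infty}$ enters the vorticity equation linearly, so the usual single-exponential Gronwall is not available, and only a careful tracking of the commutator structure and of the transported nature of $\partial_X\rho$ (avoiding any direct use of $\nabla\rho$ at H\"older-positive level) keeps the inequality subcritical enough to yield the double-log lifespan rather than finite-time breakdown.
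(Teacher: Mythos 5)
Your overall architecture coincides with the paper's: mollify the data, propagate the $L^p$ norms of $\omega$ and $\nabla\rho$, exploit that $\partial_{X_t}\rho$ is transported without source, write $\partial_{X_t}\partial_1\rho=\partial_1(\partial_{X_t}\rho)-\partial_{\partial_1X_t}\rho$ and control the commutator with Corollary~\ref{ppxr0}, then feed $\Vert\omega(t)\Vert^{\varepsilon}_{X_t}$ into Theorem~\ref{propoo1}. Up to that point you are reproducing the paper's a priori estimates. The gap is in the step where you close the loop. You assert $\Vert\nabla v(t)\Vert_{L^\infty}\leq C_0\big(1+\log(e+U(t))\big)$ with a \emph{constant} prefactor $C_0$; but the prefactor in Theorem~\ref{propoo1} is $\Vert\omega(t)\Vert_{L^a\cap L^\infty}$, which is not conserved here: it grows like $\Vert\omega_0\Vert+t\Vert\nabla\rho_0\Vert e^{CV(t)}$ through the source $\partial_1\rho$ in the vorticity equation. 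If your inequality were true as written, then $U'\leq C_0\,U\log(e+U)$ would integrate to a double-exponential bound valid for all time, i.e.\ global well-posedness — which is precisely what one cannot prove. Reinstating the correct prefactor produces a right-hand side that is super-linear in $U$ (terms like $t\Vert\nabla\rho_0\Vert\,U^2\log U$ appear), and your claimed ODE $U'\leq C_0\,U\log(e+U)(1+\log(e+U))$ neither follows from the estimates nor yields the stated lifespan: its blow-up time is bounded independently of $\Vert\nabla\rho_0\Vert_{L^\infty}$ since $U(0)=O(1)$, whereas the theorem's $T_0$ must tend to $+\infty$ as $\Vert\nabla\rho_0\Vert_{L^\infty}\to 0$.

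The missing idea is the bootstrap the paper performs at \eqref{time}: one restricts to times $T$ for which $T\Vert\nabla\rho_0\Vert_{L^a\cap L^\infty}e^{CV(T)}\leq\min\big(1,\Vert\omega_0\Vert_{L^1\cap L^\infty}\big)$. On such an interval the prefactor collapses to $C\Vert\omega_0\Vert_{L^a\cap L^\infty}$, the inequality becomes the \emph{linear} Gronwall relation $\Vert\nabla v(t)\Vert_{L^\infty}\leq C\Vert\omega_0\Vert_{L^a\cap L^\infty}(C_0+t+V(t))$, hence $V(t)$ grows at most like $(C_0+t)e^{C\Vert\omega_0\Vert t}$, and one then checks that the smallness condition is self-consistent exactly for $T$ of the double-logarithmic size $\frac{1}{C_0}\log\big(1+C_0\log(1+C_0/\Vert\nabla\rho_0\Vert_{L^\infty})\big)$. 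Without this continuity/bootstrap step your differential inequality does not close. A secondary remark: your Osgood-type $L^2$ uniqueness is not needed at this level of regularity — both solutions are Lipschitz on $[0,T_0]$, and the paper runs a plain $L^q$ estimate with $2<q<\infty$ (the restriction $q>2$ being forced by the slow decay of a vortex-patch velocity), handling the pressure by Riesz transforms; the Yudovich argument is reserved for the singular-patch case where $\nabla v\notin L^\infty$.
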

\begin{remark}
The Theorem \ref{th1} can be applied to  a  larger class of  initial data than the vortex patches class.  For  example, we may take $\omega_0=\tilde{\omega}_01_{\Omega_0}$ with $\Omega_0$ a bounded domain of class $C^{\varepsilon+1}$ and $\tilde{\omega}_0$ a function of class $C^\varepsilon(\RR^2)$ for some $\varepsilon\in]0,1[$. 
\end{remark}
We shall now make precise the boundary regularity used in the main theorems.
\begin{definition}\label{bord}
 Let $0<\varepsilon<1$ and  $\Omega$ be  a bounded domain in $\RR^d$. We say that $\Omega$ is of class  $C^{1+\varepsilon}$ if  there exists a compactly supported  function $f \in C^{1+\varepsilon}(\RR^2)$ and  a neighborhood $V$ of $\partial\Omega$ such that
 $$
\partial\Omega=  f^{-1}(\{0\})\cap V\quad\textnormal{and}\quad \nabla f(x)\neq  0\quad \forall x\in V.
$$

\end{definition}
Let us see how  to deduce the results of  Theorem \ref{the} from the preceding one.
\subsection{Proof of Theorem \ref{the}}

 To begin with,  we shall construct an admissible family of vector fields $X_0$ for which the initial vorticity $\omega_0=1_{\Omega_0}$ satisfies the tangential regularity property. 
 In view of the previous definition, there exists a real function $f_0\in C^{1+\varepsilon}$ and a neighborhood $V_0$ such that $\partial\Omega_0=V_0\cap f^{-1}(\{0\})$ and  $\nabla f_0\not\equiv 0$ on $V_0$. 
 Let $\tilde{\alpha}$ be a smooth function supported in $V_0$ and taking the value $1$ in a small neighborhood of $V_1\subset V_0$. We set
$$
X_{0,0}=\nabla^\perp f_0,\quad X_{0,1}=(1-\tilde{\alpha})\begin{pmatrix}1\\ 0\end{pmatrix}.
$$
The first vector field is of class $C^\varepsilon$ with zero divergence, the second is $C^\infty$ and a simple verification shows that the family of vector fields $(X_{0, i})_{i\in\{0,1\}}$ is admissible . Besides, since the derivative of $\omega_0$ along the direction $\nabla^\perp f_0$ is zero  and $1-\tilde{\alpha}$ vanishes on $V_1$ then we have $\partial_{X_{0, i}}\omega_0=0$.
 
Also, the fact that  $\rho_0\in C^{\varepsilon+1}$ implies that $\rho_0\in C^{\varepsilon}\big((X_{0, i})_{i\in\{1,2\}}\big)$. 
Therefore  Theorem \ref{the} provides  a unique local solution $(v,\rho)\in L^\infty_{loc}\big([0,T_0], Lip(\RR^2)\big)^2$ to \eqref{B}. 
%On other hand, as the vorticity $\omega$ is transported by the flow $\psi$ of $v$ with a source term $\partial_1\rho$, then the  characteristics method implies that for all $t\leq T_0$  we have
%\begin{eqnarray*}
%\omega(t,x)&=&\omega_0\big(\psi^{-1}(t,x)\big)+\int_0^t(\partial_1\rho)\big(\tau,\psi(\tau,\psi^{-1}(t,x))\big)d\tau\\ &=&1_{\psi(t,\Omega_0)}+\int_0^t(\partial_1\rho)\big(\tau,\psi(\tau,\psi^{-1}(t,x))\big)d\tau.
%\end{eqnarray*}
 For the regularity of  the transported initial domain $\Omega_t=\psi(t,\Omega_0)$, we consider $\gamma^0\in C^{\varepsilon+1}(\RR_+,\RR^2)$ a parametrization of $\partial\Omega_0$ given by
\begin{equation*}
\left\{ \begin{array}{ll}
\partial_{\sigma}\gamma^0=\nabla^\perp f_0(\gamma^0(\sigma)), &\\
\gamma^0(0) =x_0\in\partial\Omega_0.&\\
\end{array} \right. 
\end{equation*} 
Set $\gamma_t(\sigma)=\psi(t,\gamma^0(\sigma))$, then by differentiating with respect to the parameter $\sigma$ we get
\begin{equation*}
\left\{ \begin{array}{ll}
\partial_{\sigma}\gamma_t(\sigma)=(\partial_{X_{0,0}}\psi)(t,\gamma^0(\sigma)), &\\
\gamma_t(0) =\psi(t,x_0).&\\
\end{array} \right. 
\end{equation*} 
From Theorem \ref{th1}, $\partial_{X_{0,0}}\psi$ belongs to $L^\infty_{loc}([0,T_0], C^\varepsilon)$, then $\gamma_t$ belongs to $L^\infty_{loc}(\RR_+, C^{\varepsilon+1})$ for all $t\leq T_0$. 
Finally, as $X_{0,0}$ does not vanish on $V_0$, then it is the same for $\partial_{X_{0,0}}\psi$, therefore, $\partial_{\sigma}\gamma_t$  does not vanish on $\RR$ as indicated by the estimate \eqref{pp2}. Consequently, $\gamma_t$  is a regular parameterization of $\partial\Omega_t$.

\subsection{A priori estimates}
%%%%%%%%%%%%%%%
This part is the core of the proof of Theorem \ref{th1}. As a matter of fact, we aim here to propagate the regularity of the initial data, namely, to  bound the norms $\Vert \omega(t)\Vert_{L^a\cap L^\infty}$ and $\Vert \nabla\rho(t)\Vert_{L^a\cap L^\infty}$. 
 Even though these quantities seem to be less regular than $\Vert \nabla v(t)\Vert_{L^1_tL^\infty}$ , it is not at all clear how to estimate them without involving the latter quantity.  It comes then to show the two following propositions: The first deals with the $L^p$ estimates and the second is related
on the estimate of the Lipschitz norm for the  solution of the system \ref{B}.
\begin{proposition}\label{lp}
Let $(v,\rho)$ be a smooth solution of the Boussinesq system \eqref{B} defined on the time interval $[0,T]$. Then, for all $p\in[1,+\infty]$ and $t\leq T$ we have 
\begin{eqnarray}\label{pp7}
\Vert\omega(t)\Vert_{L^p}&\leq & \Vert\omega_0\Vert_{L^p}+\Vert\nabla \rho_0\Vert_{L^p} e^{CV(t)}t.
\end{eqnarray}
and
\begin{equation}\label{rho}
\Vert\nabla \rho(t)\Vert_{L^p}\leq \Vert\nabla \rho_0\Vert_{L^p} e^{CV(t)}.
\end{equation}
with the notation:
$$
V(t)\triangleq\int_0^t\|\nabla v(\tau)\|_{L^\infty} d\tau.
$$
\end{proposition}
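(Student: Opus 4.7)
The plan is to treat the two estimates in the reverse order they are stated, since \eqref{rho} feeds into \eqref{pp7}. Both proofs rely on the standard fact that the (divergence-free) flow $\psi$ preserves $L^p$ norms of transported quantities, together with Duhamel along characteristics.

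\textbf{Step 1: Estimate on $\nabla\rho$.} Differentiating the density equation $\partial_t\rho+v\cdot\nabla\rho=0$ with respect to $x_j$ yields the already-written transport equation \eqref{nablarho},
$$
(\partial_t+v\cdot\nabla)\partial_j\rho=-\partial_j v\cdot\nabla\rho.
$$
Composing with the flow $\psi(t)$ and using incompressibility, the $L^p$ norm is preserved under pure transport, so for any $p\in[1,\infty]$ one reads off
$$
\Vert\nabla\rho(t)\Vert_{L^p}\leq\Vert\nabla\rho_0\Vert_{L^p}+\int_0^t\Vert\nabla v(\tau)\Vert_{L^\infty}\Vert\nabla\rho(\tau)\Vert_{L^p}d\tau,
$$
and Gronwall's lemma immediately gives \eqref{rho}. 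For $p=\infty$ one argues directly on characteristics, writing $\partial_j\rho(t,\psi(t,x))$ as the solution of an ODE whose right-hand side is controlled by $\Vert\nabla v\Vert_{L^\infty}\Vert\nabla\rho\Vert_{L^\infty}$; the same Gronwall argument applies.

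\textbf{Step 2: Estimate on $\omega$.} From the vorticity equation $\partial_t\omega+v\cdot\nabla\omega=\partial_1\rho$ in \eqref{omega}, composition with the flow and the $L^p$-isometry property of $\psi(t)$ yield
$$
\Vert\omega(t)\Vert_{L^p}\leq\Vert\omega_0\Vert_{L^p}+\int_0^t\Vert\partial_1\rho(\tau)\Vert_{L^p}d\tau.
$$
Plugging in \eqref{rho} and using that $V$ is non-decreasing in $t$, the integrand is bounded by $\Vert\nabla\rho_0\Vert_{L^p}e^{CV(\tau)}\leq\Vert\nabla\rho_0\Vert_{L^p}e^{CV(t)}$, so the time integral produces the factor $t$ and we obtain \eqref{pp7}.

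\textbf{Expected obstacle.} There is no genuine analytic difficulty here: both estimates are classical transport estimates once one has the vorticity-density formulation \eqref{omega} and its differentiated counterpart \eqref{nablarho}. The only point requiring a little care is the endpoint $p=\infty$, where one cannot integrate against $|\partial_j\rho|^{p-2}\partial_j\rho$; this is handled by working directly along the characteristics of the Lipschitz velocity field, which is legitimate since the proposition is stated for smooth solutions. Uniform control in $p$ then follows from the same scheme for all $p\in[1,\infty]$.
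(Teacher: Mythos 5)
Your proof is correct and follows essentially the same route as the paper: differentiate the density equation, use the transport structure and Gronwall to get \eqref{rho}, then plug this into the Duhamel bound for the vorticity equation to get \eqref{pp7}. The extra care you give to the endpoint $p=\infty$ (arguing along characteristics) and the corrected sign in \eqref{nablarho} are fine but do not change the argument.
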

\begin{proof}
Using the vorticity  equation \eqref{omega} we can easily see that for all $1\leq p\leq\infty$,
\begin{eqnarray}\label{ppp6}
\Vert\omega(t)\Vert_{L^p}&\leq & \Vert\omega_0\Vert_{L^p}+\int_0^t\Vert\nabla\rho(\tau)\Vert_{L^p}d\tau.
\end{eqnarray}
Next, applying the partial derivative  operator $\partial_j$ to the second equation of the system \eqref{B}, we get
\begin{equation*}\label{eqprho}
\partial_t\partial_j \rho +v\cdot\nabla(\partial_j \rho)=\partial_j v\cdot \nabla\rho.
\end{equation*}
Hence, for all $1\leq p\leq \infty$, we obtain
\begin{eqnarray*}
\Vert\partial_j \rho(t)\Vert_{L^p}&\leq & \Vert\nabla \rho_0\Vert_{L^p}+\int_0^t\Vert\nabla\rho(\tau)\Vert_{L^p}\Vert\nabla v(\tau)\Vert_{L^\infty}d\tau.
\end{eqnarray*}
According to the Gronwall lemma we conclude that
\begin{equation*}
\Vert\nabla \rho(t)\Vert_{L^p}\leq \Vert\nabla \rho_0\Vert_{L^p} e^{CV(t)}.
\end{equation*}
Plugging this estimate into \eqref{ppp6} gives,
\begin{eqnarray*}
\Vert\omega(t)\Vert_{L^p}&\leq & \Vert\omega_0\Vert_{L^p}+\Vert\nabla \rho_0\Vert_{L^p} e^{CV(t)}t;\quad\forall 1\leq p\leq\infty.
\end{eqnarray*}
\end{proof}
Next we shall discuss  the Lipschitz norm  of the velocity. This parts uses the formalism of the vortex patches. Our result reads as follows.
\begin{proposition}\label{prop1}
Let  $0<\varepsilon<1$, $a>1$ and  $X_0$ be an  admissible family  of vector fields of class $C^\varepsilon$. Let  $(v,\rho)$ be a smooth solution of the Boussinesq system \eqref{B} defined on the time interval $[0,T^\star[$. Then there exists $0<T_0\leq T^\star $ such that for all time $t\leq T_0$   we have
\begin{equation*}\label{lip}
\Vert \nabla v(t)\Vert_{L^\infty}\leq  C_{0}.%e^{C\Vert \omega_0\Vert_{L^a\cap L^\infty}t}.
\end{equation*}
% In addition $T_0$ is given  by the formula
% $$
%T_0\triangleq \frac{1}{C\Vert \omega_0\Vert_{L^a\cap L^\infty}}\log\bigg(1+\frac{\Vert \omega_0\Vert_{L^a\cap L^\infty}}{\Vert \omega_0\Vert_{L^a\cap L^\infty}C_{0,\varepsilon}+1}\log\Big(1+\frac{C\textnormal{min}\big(\Vert \omega_0\Vert_{L^a\cap L^\infty},\Vert \omega_0\Vert_{L^a\cap L^\infty}^2\big)}{\Vert\nabla\rho_0\Vert_{L^\infty}}\Big)\bigg),
%$$
%with
%$$
%C_{0,\varepsilon}\triangleq  1+\Vert \omega_0\Vert_{L^a\cap L^\infty}\log \bigg(e+\frac{\big(1+\Vert \omega_0\Vert_{L^\infty}\big) \Big(\Vert \omega_0\Vert^{\varepsilon}_{X_0}+\Vert \rho_0\Vert^{\varepsilon+1}_{X_0}\Big)+N_\varepsilon(X_0)}{\Vert \omega_0\Vert_{L^a\cap L^\infty}}\bigg).
% $$
 \end{proposition}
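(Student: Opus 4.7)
The plan is to run a nonlinear Gronwall argument on $V(t)=\int_0^t\Vert\nabla v(\tau)\Vert_{L^\infty}d\tau$ by feeding the logarithmic estimate of Theorem \ref{propoo1} with the transported family $X_t$ defined by \eqref{p1}. This requires three ingredients: (i) $X_t$ remains an admissible $C^\varepsilon$ family with controlled quantitative bounds, (ii) the tangential derivative $\partial_{X_t}\omega$ stays in $C^{\varepsilon-1}$ with an exponential-in-$V$ bound, and (iii) $\Vert\omega(t)\Vert_{L^a\cap L^\infty}$ is controlled through Proposition \ref{lp}.

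\textbf{Step 1: Persistence of admissibility.} The identity \eqref{tr71} gives $(\partial_t+v\cdot\nabla)X=X\cdot\nabla v$, from which $|X_t(\psi(t,x))|$ satisfies a linear ODE with coefficient bounded by $\Vert\nabla v(\tau)\Vert_{L^\infty}$; taking the $\sup$ in $\lambda$ and the $\inf$ in $x$ yields $I(X_t)\ge I(X_0)\,e^{-V(t)}$. Because $v$ is divergence-free, a short computation shows $(\partial_t+v\cdot\nabla)\,\textnormal{div}\,X_t=0$, so Lemma \ref{lem1} with $r=\varepsilon-1$ propagates $\Vert\textnormal{div}\,X_t\Vert_{\varepsilon-1}$ exponentially in $V(t)$. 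Applying Lemma \ref{lem1} with $r=\varepsilon$ to the components of $X_t$ and bounding the source $X\cdot\nabla v$ by paraproduct estimates, we obtain $\Vert X_t\Vert_\varepsilon\le C\Vert X_0\Vert_\varepsilon\,e^{CV(t)}$. Hence $N_\varepsilon(X_t)\le N_\varepsilon(X_0)\,e^{CV(t)}$.

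\textbf{Step 2: Tangential regularity of $\omega$ and $\rho$.} Since $\partial_{X_t}$ commutes with $\partial_t+v\cdot\nabla$ and $\rho$ is transported, $(\partial_t+v\cdot\nabla)\partial_{X_t}\rho=0$, so Lemma \ref{lem1} gives $\Vert\partial_{X_t}\rho(t)\Vert_\varepsilon\le \Vert\partial_{X_0}\rho_0\Vert_\varepsilon\,e^{CV(t)}$. Writing
$$
(\partial_t+v\cdot\nabla)\partial_{X_t}\omega=\partial_1(\partial_{X_t}\rho)+[\partial_{X_t},\partial_1]\rho,\qquad [\partial_{X_t},\partial_1]\rho=-(\partial_1 X_t)\cdot\nabla\rho,
$$
the first source is bounded in $C^{\varepsilon-1}$ by $\Vert\partial_{X_t}\rho\Vert_\varepsilon$, and the second by Corollary \ref{ppxr0} in terms of $\Vert\nabla\rho\Vert_{L^\infty}$ and the $C^\varepsilon$ norms of $X_t$ and $\textnormal{div}\,X_t$. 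Combining with Proposition \ref{lp} to control $\Vert\nabla\rho(t)\Vert_{L^\infty}$, a final application of Lemma \ref{lem1} at $r=\varepsilon-1$ to $\partial_{X_t}\omega$ yields $\Vert\partial_{X_t}\omega(t)\Vert_{\varepsilon-1}\le C_0(1+t)\,e^{CV(t)}$.

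\textbf{Step 3: Closing the differential inequality.} Assembling the preceding estimates,
$$
\Vert\omega(t)\Vert^\varepsilon_{X_t}\le C_0(1+t)\,e^{CV(t)},
$$
and Proposition \ref{lp} gives $\Vert\omega(t)\Vert_{L^\infty}\le\Vert\omega_0\Vert_{L^\infty}+t\,\Vert\nabla\rho_0\Vert_{L^\infty}e^{CV(t)}$ together with a similar $L^a$ bound. Plugging everything into Theorem \ref{propoo1}, the logarithm converts the exponential in $V$ into a linear factor, leading to an inequality of the schematic form
$$
V'(t)\le C_0\Big(1+t\,e^{CV(t)}\Vert\nabla\rho_0\Vert_{L^\infty}\Big)\Big(1+V(t)+\log\big(e+1/\Vert\nabla\rho_0\Vert_{L^\infty}\big)\Big).
$$
Integrating this yields a uniform bound $V(t)\le C_0$ on a time interval whose length is controlled from below by the announced double logarithm; the delicate point is to untangle the nested exponentials and logarithms so that the bound degenerates correctly as $\Vert\nabla\rho_0\Vert_{L^\infty}\to 0$, recovering the global Euler case. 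This is the main obstacle in the argument, since naive Gronwall only provides an exponential bound on $V$ and loses the sharp double-log lifespan.
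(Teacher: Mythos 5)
Your overall architecture (transported family $X_t$, tangential estimates on $\partial_{X_t}\omega$ and $\partial_{X_t}\rho$, then Theorem \ref{propoo1} and a nonlinear Gronwall on $V$) is the paper's, but two steps as written do not go through. First, in Step 1 you claim that the source $X\cdot\nabla v$ in the equation for $X_t$ can be bounded in $C^\varepsilon$ ``by paraproduct estimates,'' yielding the decoupled bound $\Vert X_t\Vert_\varepsilon\le C\Vert X_0\Vert_\varepsilon e^{CV(t)}$. This fails: $\nabla v$ is only in $L^\infty$ (indeed $\Vert\nabla v\Vert_{L^\infty}$ is the very quantity being estimated), so while $T_{\nabla v}X$ is harmless, the terms $T_{X}\nabla v$ and $R(X,\nabla v)$ cannot be placed in $C^{\varepsilon}$ from $\Vert X\Vert_\varepsilon\Vert\nabla v\Vert_{L^\infty}$ alone. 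The missing ingredient is Chemin's static estimate $\Vert \partial_{X_t}v\Vert_{\varepsilon}\lesssim \Vert\nabla v\Vert_{L^\infty}\tilde{\Vert}X_t\Vert_{\varepsilon}+\Vert\partial_{X_t}\omega\Vert_{\varepsilon-1}$, which re-expresses the bad part of $\nabla v$ through the vorticity. Consequently $\Vert X_t\Vert_\varepsilon$ and $\Vert\partial_{X_t}\omega\Vert_{\varepsilon-1}$ cannot be estimated separately; the paper runs a coupled Gronwall on $\Gamma(t)=\big(\Vert\partial_{X_t}\omega\Vert_{\varepsilon-1}+\tilde{\Vert}X_t\Vert_\varepsilon\big)e^{-CV(t)}$ (see \eqref{pp33} and \eqref{xpx}), and the resulting bound carries the extra factor $\exp\big(Ct\Vert\nabla\rho_0\Vert_{L^\infty}e^{CV(t)}\big)$ coming from $\int_0^t\Vert\nabla\rho\Vert_{L^\infty}d\tau$ in the exponent, which your Step 2 bound $C_0(1+t)e^{CV(t)}$ omits.

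Second, Step 3 stops exactly where the proof has to be finished: you write down the schematic inequality and then state that untangling the nested exponentials ``is the main obstacle,'' without resolving it. The paper's resolution is a bootstrap: restrict to times $t\le T$ for which \eqref{time} holds, i.e. $t\Vert\nabla\rho_0\Vert_{L^a\cap L^\infty}e^{CV(t)}\le\min\big(1,\Vert\omega_0\Vert_{L^1\cap L^\infty}\big)$. On that interval the inequality \eqref{121} linearizes to $\Vert\nabla v(t)\Vert_{L^\infty}\le C\Vert\omega_0\Vert_{L^a\cap L^\infty}\big(C_0+t+V(t)\big)$, ordinary Gronwall gives an explicit exponential bound on $\Vert\nabla v(t)\Vert_{L^\infty}$, and one then checks a posteriori (by a continuity argument) that the double-logarithmic time \eqref{lif1} is consistent with the smallness constraint; the degeneration as $\Vert\nabla\rho_0\Vert_{L^\infty}\to 0$ comes out of this consistency check, not out of integrating the nonlinear ODE directly. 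As submitted, your argument identifies the right quantities but leaves both the coupling mechanism of Step 1 and the closing bootstrap of Step 3 unproved.
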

 The  proof of this proposition is firmly based on  the following lemma.% But before proceeding to the proof we shall state the following lemma:
 \begin{lemma}
There exists a constant $C$ such that for any smooth solution $(v,\rho)$ of \eqref{B} on $[0,T]$, and any time dependent family of vector field $X_t$ transported by the flow of $v$, we have for all $t\in[0,T]$,
\begin{equation}\label{pp2}
I(X_t)\geq I(X_0)e^{-V(t)}.%\exp\Big(-\int_0^t\Vert\nabla v(\tau)\Vert_{L^\infty}d\tau\Big).
\end{equation}
\begin{equation}\label{div}
\Vert\textnormal{div} X_{t,\lambda}\Vert_{\varepsilon}\leq \Vert\textnormal{div} X_{0,\lambda}\Vert_{\varepsilon}e^{CV(t)}.
\end{equation}
\begin{eqnarray}\label{xpx}
\Vert X_t\Vert_\varepsilon+\Vert \partial_{X_{t,\lambda}}\omega\Vert_{\varepsilon-1}&\leq &C\Big(\tilde{\Vert} X_{0,\lambda}\Vert_{ \varepsilon}+\Vert \pxzl\omega_0\Vert_{ \varepsilon-1}+\Vert \pxzl\rho_0\Vert_{\varepsilon}\Big)e^{Ct}e^{CV(t)}\notag\\ &\times &\exp\big( t\Vert \nabla \rho_0\Vert_{L^\infty}e^{CV(t)}\big).
\end{eqnarray}
\begin{eqnarray*}
\Vert \pxtl\rho\Vert_{ \varepsilon}&\leq & \Vert \pxzl\rho_0\Vert_{ \varepsilon}e^{CV(t)}.
\end{eqnarray*}
where
$$
V(t)\triangleq \int_0^t\Vert \nabla v(\tau)\Vert_{L^\infty}d\tau.
$$
 \end{lemma}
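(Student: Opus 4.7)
The plan is to derive the four estimates in order by exploiting the commutation $[\partial_{X_{t,\lambda}},\partial_t+v\cdot\nabla]=0$, which is the raison d'être of Chemin's vector-field formalism. For each quantity I write down a transport equation (with explicit source term) and apply Lemma \ref{lem1} in the appropriate Hölder class; the last estimate is coupled and needs a Gronwall closure.

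\textbf{The three easy estimates.} From \eqref{tr71}, along a trajectory $t\mapsto\psi(t,x)$ one gets $\tfrac{d}{dt}(X_{t,\lambda}\circ\psi)=((X_{t,\lambda}\cdot\nabla v)\circ\psi)$, hence $|X_{t,\lambda}(\psi(t,x))|\geq|X_{0,\lambda}(x)|\,e^{-V(t)}$; since $\psi(t,\cdot)$ is a bijection, taking $\sup_\lambda$ then $\inf_x$ yields \eqref{pp2}. A direct computation using $\dv v=0$ shows $\partial_t\dv X_{t,\lambda}+v\cdot\nabla\dv X_{t,\lambda}=0$, and Lemma \ref{lem1} with $r=\varepsilon$, $g=0$ gives \eqref{div}. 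Likewise, since $\partial_{X_{t,\lambda}}$ commutes with $\partial_t+v\cdot\nabla$ and $\rho$ is transported, $\partial_{X_{t,\lambda}}\rho$ is itself transported, and Lemma \ref{lem1} with $r=\varepsilon$ delivers the fourth inequality.

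\textbf{The coupled estimate \eqref{xpx}.} From \eqref{tr71} we have $(\partial_t+v\cdot\nabla)X_{t,\lambda}=\partial_{X_{t,\lambda}}v$, so Lemma \ref{lem1} with $r=\varepsilon$ gives
\begin{equation*}
\|X_{t,\lambda}\|_\varepsilon\leq \|X_{0,\lambda}\|_\varepsilon e^{CV(t)}+\int_0^t\|\partial_{X_{\tau,\lambda}}v(\tau)\|_\varepsilon\, e^{C(V(t)-V(\tau))}d\tau.
\end{equation*}
Commuting $\partial_{X_{t,\lambda}}$ through the $\omega$-equation in \eqref{omega} produces
\begin{equation*}
(\partial_t+v\cdot\nabla)\partial_{X_{t,\lambda}}\omega=\partial_1\partial_{X_{t,\lambda}}\rho+[\partial_{X_{t,\lambda}},\partial_1]\rho=\partial_1\partial_{X_{t,\lambda}}\rho-(\partial_1 X_{t,\lambda})\cdot\nabla\rho,
\end{equation*}
and Lemma \ref{lem1} with $r=\varepsilon-1$, combined with Corollary \ref{ppxr0} applied to the commutator, yields
\begin{equation*}
\|\partial_{X_{t,\lambda}}\omega\|_{\varepsilon-1}\leq\|\partial_{X_{0,\lambda}}\omega_0\|_{\varepsilon-1}e^{CV(t)}+C\!\int_0^t\!\Big(\|\partial_{X_{\tau,\lambda}}\rho\|_\varepsilon+\|\nabla\rho(\tau)\|_{L^\infty}\bigl(\|\dv X_{\tau,\lambda}\|_\varepsilon+\|X_{\tau,\lambda}\|_\varepsilon\bigr)\Big)e^{C(V(t)-V(\tau))}d\tau.
\end{equation*}
I then invoke Chemin's paradifferential bound (Bony decomposition applied to the singular-integral representation of $\nabla v$ in terms of $\omega$) in the form
\begin{equation*}
\|\partial_{X_{t,\lambda}}v\|_\varepsilon\leq C\Bigl(\|\omega\|_{L^\infty}\bigl(\|X_{t,\lambda}\|_\varepsilon+\|\dv X_{t,\lambda}\|_\varepsilon\bigr)+\|\partial_{X_{t,\lambda}}\omega\|_{\varepsilon-1}\Bigr).
\end{equation*}
Feeding this into the first inequality, replacing $\|\dv X_{\tau,\lambda}\|_\varepsilon$, $\|\partial_{X_{\tau,\lambda}}\rho\|_\varepsilon$ and $\|\nabla\rho(\tau)\|_{L^\infty}$ by their initial counterparts through \eqref{div}, \eqref{rho} and the fourth estimate, and summing, the quantity $F(t)\triangleq\|X_t\|_\varepsilon+\|\partial_{X_{t,\lambda}}\omega\|_{\varepsilon-1}$ satisfies a linear integral inequality whose forcing contains the additive term $t\,\|\partial_{X_{0,\lambda}}\rho_0\|_\varepsilon$ and a multiplicative factor $1+\|\nabla\rho_0\|_{L^\infty}e^{CV(\tau)}$; Gronwall's lemma then delivers precisely the exponential-in-$t\|\nabla\rho_0\|_{L^\infty}e^{CV(t)}$ behavior displayed in \eqref{xpx}.

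The main obstacle is the paradifferential bound on $\|\partial_{X_{t,\lambda}}v\|_\varepsilon$ in terms of $\|\partial_{X_{t,\lambda}}\omega\|_{\varepsilon-1}$: this is where the negative index $\varepsilon-1$ is forced upon us (the operator $\omega\mapsto\nabla v$ has order zero, so one must give up one derivative on $\omega$ to buy the Hölder regularity $\varepsilon$ on $v$), and it is the only place where Calder\'on-Zygmund-type analysis enters. Once this estimate is accepted, the coupling is a mechanical but bookkeeping-heavy Gronwall argument, and the appearance of the double exponential $\exp(t\|\nabla\rho_0\|_{L^\infty}e^{CV(t)})$ is the signature of the Boussinesq forcing $\partial_1\rho$ acting on the vorticity.
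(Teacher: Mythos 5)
Your proposal is correct and follows essentially the same route as the paper: the same transport equations for $\mathrm{div}\,X_{t,\lambda}$, $\partial_{X_{t,\lambda}}\rho$ and $\partial_{X_{t,\lambda}}\omega$, the same commutator splitting $\partial_{X}\partial_1\rho=\partial_1(\partial_X\rho)-\partial_{\partial_1 X}\rho$ handled via Corollary \ref{ppxr0}, Chemin's static estimate for $\Vert\partial_{X}v\Vert_\varepsilon$, and the same Gronwall closure on $\Vert X_t\Vert_\varepsilon+\Vert\partial_{X_{t,\lambda}}\omega\Vert_{\varepsilon-1}$ after substituting \eqref{rho}. The only cosmetic deviation is writing $\Vert\omega\Vert_{L^\infty}$ in place of $\Vert\nabla v\Vert_{L^\infty}$ in the static estimate, which is harmless since $\Vert\omega\Vert_{L^\infty}\lesssim\Vert\nabla v\Vert_{L^\infty}$ and the term is absorbed into $e^{CV(t)}$ either way.
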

\begin{proof}
Taking the derivative of  term $\pxzl\psi(t,x)$ with  respect to the time $t$ we get
\begin{equation}\label{X0}
\left\{ \begin{array}{ll}
\partial_t \pxzl\psi(t,x)=\nabla v(t,\psi(t,x))\pxzl\psi(t,x). &\\
\pxzl\psi(0,x)=X_{0,\lambda}.
\end{array} \right. 
\end{equation} 
Using the time reversibility of this equation combined with  Gronwall's lemma  we find
\begin{equation*}\label{adm}
 \vert X_{0,\lambda}(x)\vert\leq \vert \pxzl\psi(t,x)\vert e^{V(t)}.
 \end{equation*}
From the Definition \ref{def1} and the relation \eqref{p1} we obtain the desired he estimate \eqref{pp2}.\\
It is easy to check from the relation (\ref{X0}) that 
\begin{equation}\label{Xt}
\partial_t X_{t,\lambda}+v\cdot  X_{t,\lambda}=\pxtl v.
\end{equation}
Applying the divergence operator to the equation  \eqref{Xt} we obtain,
\begin{equation*}
\big(\partial_t +v\cdot\nabla\big)\textnormal{div}X_{t,\lambda}=0,
\end{equation*}
and therefore  we may use   Lemma \ref{lem1} leading to the estimate \eqref{div}.\\
Next, we intend to establish \eqref{xpx}. For this goal we start  with the  following result whose proof is given in Lemma 3.3.2  of  \cite{C},
  \begin{eqnarray*}
\Vert \pxtl v(t)\Vert_{\varepsilon}&\lesssim & \Vert\nabla v(t)\Vert_{L^\infty}\tilde{\Vert} X_{t,\lambda}\Vert_{\varepsilon}+\Vert \pxtl\omega(t)\Vert_{ \varepsilon-1}.
\end{eqnarray*}
Applying  Lemma \ref{lem1} to equation (\ref{Xt}) we get
\begin{eqnarray*}
\Vert X_{t,\lambda}\Vert_{\varepsilon}&\leq & e^{CV(t)}\Big( \Vert X_{0,\lambda}\Vert_{\varepsilon}+C\int_0^t\big(\Vert\nabla v(\tau)\Vert_{L^\infty}\tilde{\Vert} X_{\tau,\lambda}\Vert_{\varepsilon}+\Vert \partial_{X_{\tau,\lambda}}\omega(\tau)\Vert_{ \varepsilon-1}\big)e^{-CV(\tau)}d\tau\Big).
\end{eqnarray*}
Putting this estimate with \eqref{div} yields
\begin{eqnarray}\label{pp33}
\tilde{\Vert} X_{t,\lambda}\Vert_{\varepsilon}&\leq & e^{CV(t)}\Big(\tilde{\Vert} X_{0,\lambda}\Vert_{\varepsilon}+ C\int_0^t\big(\Vert\nabla v(\tau)\Vert_{L^\infty}\tilde{\Vert} X_{\tau,\lambda}\Vert_{\varepsilon}+\Vert \partial_{X_{\tau,\lambda}}\omega(\tau)\Vert_{ \varepsilon-1}\big)e^{-CV(\tau)}d\tau\Big).
\end{eqnarray}
Since  $\pxtl$ commutes with the transport  operator $\partial_t +v\cdot\nabla$, then
 \begin{equation*}
\big(\partial_t +v\cdot\nabla\big)\pxtl\omega=\pxtl\partial_1\rho
\end{equation*}
and consequently we get in view of  Lemma \ref{lem1} 
\begin{eqnarray}\label{pp4}
\Vert\pxtl\omega(t)\Vert_{ \varepsilon-1}&\leq &e^{CV(t)}\Big(\Vert \pxzl\omega_0\Vert_{ \varepsilon-1}+ C\int_0^t\Vert  \partial_{X_{\tau,\lambda}}\partial_1\rho(\tau)\Vert_{\varepsilon-1}e^{-CV(\tau)}d\tau\Big).
\end{eqnarray}
Observe that
\begin{equation}
\partial_{X_{\tau,\lambda}}\partial_1\rho=\partial_1 (\partial_{X_{\tau,\lambda}}\rho)-\partial_{\partial_1X_{\tau,\lambda}}\rho,
\end{equation}
and thus
\begin{eqnarray}\label{pp5}
\Vert\partial_{X_{\tau,\lambda}}\partial_1\rho(\tau)\Vert_{\varepsilon-1}&\lesssim & \Vert \partial_{X_{\tau,\lambda}}\rho(\tau)\Vert_\varepsilon+\Vert(\partial_{1}X_{\tau,\lambda})\cdot\nabla\rho(\tau)\Vert_{\varepsilon-1}\notag \\  &\lesssim &\Vert \partial_{X_{\tau,\lambda}}\rho(\tau)\Vert_\varepsilon+\Vert\nabla\rho(\tau) \Vert_{ L^{\infty}}\tilde{\Vert} X_{\tau,\lambda}\Vert_{ \varepsilon},
\end{eqnarray} 
where we have used  in the last inequality  Corollary \ref{ppxr0}. \\
To estimate the term $\Vert \partial_{X_{\tau,\lambda}}\rho(\tau)\Vert_\varepsilon$ we use once again the commutation between   $\pxtl$ and the  transport  operator leading to,
\begin{equation}\label{r1}
\big(\partial_t +v\cdot\nabla\big)\pxtl\rho=0.
\end{equation}
Applying  Lemma \ref{lem1} gives
$$
\Vert\partial_{X_{t,\lambda}}\rho(t)\Vert_{\varepsilon}\lesssim  \Vert \partial_{X_{0,\lambda}}\rho_0\Vert_\varepsilon e^{CV(\tau)}
$$ 
which yields according to   \eqref{pp5} 
\begin{eqnarray*}
\Vert\partial_{X_{\tau,\lambda}}\partial_1\rho(\tau)\Vert_{\varepsilon-1}&\lesssim & \Vert \partial_{X_{0,\lambda}}\rho_0\Vert_\varepsilon e^{CV(\tau)}+\Vert\nabla\rho(\tau) \Vert_{ L^{\infty}}\tilde{\Vert} X_{\tau,\lambda}\Vert_{ \varepsilon}.
\end{eqnarray*}
Plugging this estimate into (\ref{pp4}) implies
\begin{eqnarray*}
\Vert\pxtl\omega(t)\Vert_{ \varepsilon-1}\lesssim e^{CV(t)}\Big(\Vert \pxzl\omega_0\Vert_{ \varepsilon-1}+\Vert \partial_{X_{0,\lambda}}\rho_0\Vert_{ \varepsilon}t+\int_0^t\Vert\nabla\rho(\tau) \Vert_{ L^{\infty}}\tilde{\Vert} X_{\tau,\lambda}\Vert_{ \varepsilon}e^{-CV(\tau)}d\tau\Big).
\end{eqnarray*}
Hence, putting together the foregoing  estimate  and  (\ref{pp33})  we get
\begin{equation*}
\Gamma(t)\lesssim \Gamma(0)+\Vert \partial_{X_{0,\lambda}}\rho_0\Vert_{ \varepsilon}t+\int_0^t \big(\Vert \nabla \rho\Vert_{L^\infty}+\Vert \nabla v\Vert_{L^\infty}+1\big)\Gamma(\tau)d\tau.
\end{equation*}
with $\Gamma(t)\triangleq \big(\Vert\pxtl\omega(t)\Vert_{ \varepsilon-1}+\tilde{\Vert} X_{t,\lambda}\Vert_{ \varepsilon}\big)e^{-CV(t)}$.
Then Gronwall's lemma implies  that
\begin{equation*}
\Gamma(t)\lesssim \Big(\Gamma(0)+\Vert \partial_{X_{0,\lambda}}\rho_0\Vert_{ \varepsilon}\Big)e^{C\int_0^t\big(\Vert \nabla \rho\Vert_{L^\infty}+\Vert \nabla v\Vert_{L^\infty}+1\big)d\tau}.
\end{equation*}
Finally, Proposition \ref{lp} gives the desired result.
\end{proof}
\vspace{0,2cm}

\textit{Proof of the Proposition $\ref{prop1}$. } 
Combining the inequality \eqref{pp7} with the estimate \eqref{xpx}  we find
\begin{eqnarray*}
\Vert \pxtl\omega(t)\Vert_{ \varepsilon-1}+\Vert \omega(t)\Vert_{L^\infty}\tilde{\Vert} X_{t,\lambda}\Vert_{\varepsilon}\leq \big(1+\Vert\omega_0\Vert_{L^\infty}\big)\big(\Gamma(0)+\Vert \partial_{X_{0,\lambda}}\rho_0\Vert_{ \varepsilon}\big)e^{ CV(t)}e^{Ct}e^{Ct\Vert \nabla \rho_0\Vert_{L^\infty}e^{CV(t)}}.
\end{eqnarray*}
Putting together the last estimate and  the inequality \eqref{pp2} then we get according to  the  Definition \ref{def2}
 \begin{eqnarray}\label{inter}
 \Vert \omega(t)\Vert^{\varepsilon}_{X_t} \leq C_0 e^{CV(t)}e^{Ct}e^{Ct\Vert \nabla\rho_0\Vert_{L^a\cap L^\infty}e^{CV(t)})},
\end{eqnarray}
 %$$
% C_{0}=C\big(1+\Vert \omega_0\Vert_{L^\infty}\big) \Big(\Vert \omega_0\Vert^{\varepsilon}_{X_0}+\Vert \rho_0\Vert^{\varepsilon+1}_{X_0}\Big)+N_\varepsilon(X_0).
%  $$
According to the Proposition \ref{propoo1} and the monotonicity  of the map $x\longmapsto x\log \big(e + \frac{a}{x}\big)$ we find
\begin{eqnarray*}
\Vert\nabla v(t)\Vert_{L^\infty}\leq  C\Big(\Vert \omega_0\Vert_{L^a\cap L^\infty}+t\Vert \nabla\rho_0\Vert_{L^a\cap L^\infty}e^{CV(t)}\Big)\log\bigg(e+\frac{ \Vert \omega(t)\Vert^{\varepsilon}_{X_t}}{\Vert \omega_0\Vert_{L^\infty}}\bigg).
\end{eqnarray*}
It follows from the estimate \eqref{inter} that
\begin{eqnarray}\label{121}
\Vert\nabla v(t)\Vert_{L^\infty}&\leq & C\Big(\Vert \omega_0\Vert_{L^a\cap L^\infty}+t\Vert \nabla\rho_0\Vert_{L^a\cap L^\infty}e^{CV(t)}\Big)\notag\\ &\times &\bigg(C_0+t+t\Vert \nabla\rho_0\Vert_{L^a\cap L^\infty}e^{CV(t)}+V(t) \bigg),\end{eqnarray}
We shall take $T>0$ such that 
\begin{equation}\label{time}
T\Vert\nabla\rho_0\Vert_{L^a\cap L^\infty}e^{CV(T)}\leq  \textnormal{min}\big(1,\Vert\omega_0\Vert_{L^1\cap L^\infty}\big).
\end{equation}
Then we deuce from  \eqref{121}
$$
\Vert\nabla v(t)\Vert_{L^\infty}\leq  C\Vert \omega_0\Vert_{L^a\cap L^\infty}\Big(C_0+t+\int_0^t\|\nabla v(\tau)\|_{L^\infty}d\tau\Big),\quad \forall t\in [0,T].
$$ 
which yields in view of  Gronwall lemma 
\begin{eqnarray*}
\Vert\nabla v(t)\Vert_{L^\infty}&\leq & C\Vert \omega_0\Vert_{L^a\cap L^\infty}(C_{0}+t)e^{C\Vert \omega_0\Vert_{L^a\cap L^\infty}t}\quad\textnormal{for all}\quad t\in[0,T],
\end{eqnarray*}
Therefore in order to satisfy the assumption \eqref{time}, it suffices that
$$
T\Vert\nabla\rho_0\Vert_{L^a\cap L^\infty}\exp\Big((C_{0}+T)\big(e^{C\Vert \omega_0\Vert_{L^a\cap L^\infty}T}-1\big)\Big)\leq  \textnormal{min}\big(1,\Vert\omega_0\Vert_{L^1\cap L^\infty}\big).
$$
Hence, a possible choice for $T$ is given by the formula
\begin{equation}\label{lif1}
T\triangleq \frac{1}{C\Vert \omega_0\Vert_{L^a\cap L^\infty}}\log\bigg(1+\frac{\Vert \omega_0\Vert_{L^a\cap L^\infty}}{\Vert \omega_0\Vert_{L^a\cap L^\infty}C_{0}+1}\log\Big(1+\frac{C\textnormal{min}\big(\Vert \omega_0\Vert_{L^a\cap L^\infty},\Vert \omega_0\Vert_{L^a\cap L^\infty}^2\big)}{\Vert\nabla\rho_0\Vert_{L^\infty}}\Big)\bigg).
\end{equation}
\subsection{Existence}
%%%%%%%%%%%%
The main goal of this paragraph is to answer to the  local existence part mentioned in  Theorem \ref{the}. For this aim we shall  we consider the following system
\begin{equation*}
\left\{ \begin{array}{lll}
\partial_{t}v_n+v_n\cdot\nabla v_n+\nabla p_n =\rho_n\vec{e}_2, &\\
\partial_{t}\rho_n+v_n\cdot\nabla\rho_n =0, &\\
\textnormal{div}\, v_n=0,\\
v_{0,n} = S_nv_0,\quad \rho_{0,n} = S_n\rho_0.
\end{array} \right. 
\end{equation*}

where $S_n$ is the usual  cut-off in frequency defined in Section 2. Since the initial data $v_{0,n}, \rho_{0,n} $ are smooth and belong to $C^s, s>1$ then we can 
apply  Chae's  result \cite{CN2} and get  for each $n$ a unique local solution $v_n,\rho_n\in C\big([0,T^*_n[,C^s\big).$
 The   maximal time existence  $T_n^*$ obeys to   the following blow-up  criterion.
\begin{equation}\label{bc}
T_n^\star<\infty\Longrightarrow \int_0^{T_n^*}\Vert \nabla v_n(\tau)\Vert_{L^\infty}d\tau=+\infty.
\end{equation}
To get a uniform time existence, that is, $\liminf_{n\to\infty}T_n^\star>0$ it suffices to check that the time existence $ \liminf_{n\to\infty}T_n\geq T,$ where $T$ is given by \eqref{lif1} and   $T_n$ is defined by  \eqref{lif1} with  the smooth data. To do so, it suffices first  to check the uniformness of the constant depending on the size of the initial data and we shall see second how to achieve the argument. First, we should bound uniformly the quantities 
$$\Vert\omega_{0,n}\Vert_{L^a \cap L^\infty},
\Vert\nabla \rho_{0,n}\Vert_{L^a \cap L^\infty},\Vert \omega_{0,n}\Vert^{\varepsilon}_{X_{0}},\Vert \rho_{0,n}\Vert^{\varepsilon+1}_{X_{0}}.
$$
This follows from the uniform continuity of the operator $S_n:L^p\to L^p$   and by the following estimates  stated in pages 62, 63 from \cite{C}
$$
\Vert \partial_{X_{0,\lambda}}\omega_{0,n}\Vert_{\varepsilon-1}\leq C\big(\Vert \partial_{X_{0,\lambda}}\omega_{0}\Vert_{\varepsilon-1}+\tilde{\Vert} X_{0,\lambda}\Vert_{\varepsilon}\Vert\omega_0\Vert_{L^\infty}\big).
$$
By the same way we may prove that 
$$
\Vert \partial_{X_{0,\lambda}}\rho_{0,n}\Vert_{\varepsilon}\leq C\big(\Vert \partial_{X_{0,\lambda}}\rho_0\Vert_{\varepsilon}+\tilde{\Vert} X_{0,\lambda}\Vert_{\varepsilon}\Vert\nabla\rho_0\Vert_{L^\infty}\big).
$$
To complete the proof of the claim, we assume that for some $n$ we have $T_n^\star\le T_0$ where $T_0$ is given by \eqref{lif1} , then all the a priori estimates done in the preceding section are justified and therefore we obtain 
 according to the Proposition \ref{prop1}
\begin{eqnarray*}
\Vert\nabla v_n(t)\Vert_{L^\infty}&\leq &C_{0},
\end{eqnarray*}
\begin{equation*}
 \Vert\omega_n(t)\Vert_{L^a \cap L^\infty}+\Vert\nabla \rho_n(t)\Vert_{L^a \cap L^\infty}\leq C_{0}%\quad \textnormal{and}\quad  \Vert\omega_n(t)\Vert_{L^a \cap L^\infty}\leq C_{0,n}
\end{equation*}
and
\begin{eqnarray*}
\Vert \rho_n(t)\Vert^{\varepsilon+1}_{X_{t,n}}+\Vert \omega_n(t)\Vert_{X_{t,n}}^{\varepsilon}++\sup_{\lambda\in \Lambda}\Vert \partial_{X_{0,\lambda}}\psi_n(t)\Vert_{\varepsilon}\leq C_{0}.%\Vert \omega_{0,n}\Vert_{L^a\cap L^\infty}\exp\big(\Vert \omega_{0,n}\Vert_{L^a\cap L^\infty}C_{0,\varepsilon,n}e^{C\Vert \omega_{0,n}\Vert_{L^a\cap L^\infty}t}\big).
\end{eqnarray*}
Where $\psi_n$ is the flow associated to the vector field $v_n$. This contradicts the blow-up criterion \eqref{bc} and consequently $T^*_n> T_0$.
By standard compactness arguments we can show that this family $(v_n,\rho_n)_{n\in\NN}$ converges to $(v,\rho)$ which satisfies  our initial value problem. We omit here the details and we will next focus on the uniqueness part. 
\subsection{Uniqueness}
%%%%%%%%%%%%%
We shall  now focus on the  uniqueness part which will be performed in the functions space  $\mathcal{X}_{T_0}=L^\infty([0,T_0],L^q\cap W^{1,\infty})$for some  $2<q<\infty$. We point out that this space is larger than the space of the existence part and the restriction to $q>2$ comes from the fact that the velocity associated to a vortex patch is not in $L^2$, due to its slow decay at infinity, but belongs to the spaces $L^q,\forall q>2.$
Let $(v_1,p_1,\rho_1)$ and $(v_2,p_2,\rho_2)$ be two solutions of the system \eqref{B} belonging to the space $\mathcal{X}_{T_0}$ and let us denote by
$$
v=v_1-v_2,\quad p=p_1-p_2\quad\textnormal{and}\quad \rho=\rho_1-\rho_2.
$$
Then we have the system
\begin{equation*}
\left\{ \begin{array}{ll}
\partial_{t}v+v_2\cdot\nabla v=-v\cdot\nabla v_1-\nabla p +\rho\vec{e}_2, &\\
\partial_{t}\rho+v_2\cdot\nabla\rho =-v\cdot\nabla \rho_1, &\\
 v_{\vert_{t=0}}=v_0,\quad \rho_{\vert_{t=0}}=\rho_0.
\end{array} \right. 
\end{equation*} 
The $L^q$ estimate of the density is given by
\begin{equation}\label{rlq}
\Vert \rho(t)\Vert_{L^q}\leq \Vert \rho_0\Vert_{L^q}+\int_0^t\Vert v(\tau)\Vert_{L^q}\Vert \nabla \rho_1\Vert_{L^\infty}d\tau.
\end{equation}
Similarly we estimate  the velocity as follows,
\begin{equation}\label{vlq}
\Vert v(t)\Vert_{L^q}\leq \Vert v_0\Vert_{L^q}+\int_0^t\big(\Vert v(\tau)\Vert_{L^q}\Vert \nabla v_1\Vert_{L^\infty}+\Vert \nabla p(\tau)\Vert_{L^q}+\Vert \rho(\tau)\Vert_{L^q}\big)d\tau.
\end{equation}

But using the incompressibility condition we get
\begin{eqnarray*}
\nabla p &=&\nabla\Delta^{-1}\textnormal{div}\big(-v\cdot\nabla v_1+\rho \vec{e_2}\big)-\nabla\Delta^{-1}\textnormal{div}(v_2\cdot\nabla v)\\ &=& \nabla\Delta^{-1}\textnormal{div}\big(-v\cdot\nabla( v_1+v_2)+\rho e_2\big) .
\end{eqnarray*}
where we have used in the last equality the fact that $\textnormal{div}(v_2\cdot\nabla v)=\textnormal{div}(v\cdot\nabla v_2)$. By the continuity of Riesz  transform on $L^q$ we obtain
$$
\Vert\nabla p\Vert_{L^q}\leq C\Big(\Vert v\Vert_{L^q}\big(\Vert \nabla v_1\Vert_{L^\infty}+\Vert \nabla v_2\Vert_{L^\infty}\big)+\Vert \rho\Vert_{L^q}\Big).
$$
Inserting the last estimate into \eqref{vlq} and using the continuity of Riesz transforms  one gets
\begin{equation*}
\Vert v(t)\Vert_{L^q}\leq \Vert v_0\Vert_{L^q}+C\int_0^t\Big(\Vert v(\tau)\Vert_{L^q}\big(\Vert \nabla v_1(\tau)\Vert_{L^\infty}+\Vert \nabla v_2(\tau)\Vert_{L^\infty}\big)+\Vert\rho\Vert_{L^q}\Big)d\tau.
\end{equation*}
Combining the last estimate with \eqref{rlq}  and using   Gronwall inequality we find that for all $t\leq T_0$ we have
\begin{eqnarray*}
\Vert (v(t),\rho(t))\Vert_{L^q}&\leq& \Vert (v_0,\rho_0)\Vert_{L^q}e^{Ct}\exp\Big(\int_0^t \big(\Vert \nabla v_1\Vert_{L^\infty}+\Vert \nabla v_2\Vert_{L^\infty}+\Vert \nabla \rho_1\Vert_{L^\infty}\big)d\tau\Big).
\end{eqnarray*}
This achieves  the proof of the uniqueness part.
%%%%%%%%%%%%%%%%%%%%%%%%%%%%%%%%%%%%%%%%%%%%%%%%%%%%%%%%%%%%%%%%%%%%%%%%%%%%%%%%%%%%%%%
\section{Singular  patches}
%%%%%%%%%%%%%%%%%%%%%%%%%%%%%%%%%%%%%%%%%%%%%%%%%%%%%%%%%%%%%%%%%%%%%%%%%%%%%%%%%%%%%%%
\quad In this section, we move on to some results concerning singular vortex patches. Our main goal is to prove   Theorem \ref{the2} and enlarge its statement for  more general initial data belonging to  Yudovich class. To the best of our knowledge, even for the simple case of patches with singular boundary no results on the local well-posedness are known in the literature. In this special case and as it was  previously stressed in  Theorem  \ref{the2} we must take a density with constant magnitude around the singularity. By this assumption we wish to kill the singularity effects  and  reduce  their violent   interaction with    the density which is the main obstacle of this problem. 

%\quad In a parallel way to the regular case, the proof   will be done in several steps. The first one deals with some a priori estimates for the Boussinesq system \eqref{B}.  Hereafter, we will discuss the construction of the solutions and finally,  we will prove the uniqueness part  at the end of this section.

\quad The generalization of Theorem \ref{the2} will require  some specific material  that were  developed  by Chemin in \cite{C}. In this new pattern we assume that the initial boundary contains a singular subset and therefore the vector fields which encode the regularity should vanish close to it. This forces us to work with  degenerate vector fields and a cut-off procedure near the singular set becomes necessary.  Therefore we shall deal with infinite family of vector fields parametrized by the distance to the singular set and the control of the blowup with respect to this parameter is mostly the main difficulty in this problem.    
 \subsection{Preliminaries}
 We shall introduce and recall some basic definitions and results in connection with singular vortex patches. These tools are mostly introduced in \cite{C} with sufficient details  and for the completeness of the manuscript we shall recall them here without any proof.
\begin{definition}\label{Defs}
%Let  $a$ be a real number larger than $1$.  We say that a function $u$ belongs to $L$ if and only if
%$$
%\Vert u\Vert_{L}\triangleq\sup_{b\geq a}\frac{\Vert u\Vert_{L^b}}{b}<\infty.
%$$
Let  $\Sigma$ be a closed set of the plane.% and let $h$ be a real number in the interval $]0,e^{-1}]$.
We denote by $L(\Sigma)$ the set of the functions $v$ such that
$$
\Vert v\Vert_{L(\Sigma)}\triangleq\sup_{0<h\leq e^{-1}}\frac{\Vert v\Vert_{L^\infty(\Sigma_h^c)}}{-\log h}<\infty.
$$
In this definition and for the remaing of the paper we shall adopt the following notation: For $h>0$ $$
 \Sigma_h=\big\{x\in\RR^2;\quad \textnormal{dist}(x,\Sigma)\leq h\big\}\quad\hbox{and} \quad \Sigma_h^c=\big\{x\in\RR^2;\quad \textnormal{dist}(x,\Sigma)\geq h\big\}.
$$

%We define the space $LL(\Sigma)$ as  $L\cap L(\Sigma)$ and we set
%$$
%\Vert u\Vert_{LL(\Sigma)}=\Vert u\Vert_{L(\Sigma)}+\Vert u\Vert_{L};
%$$
\end{definition}
Next we  introduce log-Lipschitz space which is frequently used in the framework of Yudovich solutions. This space appears in a natural way  thanks  to the fact the velocity associated to a bounded and integrable vorticity  is not in general Lipschitz but belongs to a slight bigger one  called log-Lipschitz class. 
\begin{definition}
We denote  by ${LL}$ the space of log-Lipschitz functions, that is the set of bounded functions $v$ in $\RR^2\to\RR$ satisfying
$$
\Vert v \Vert_{LL}\triangleq \Vert v\Vert_{L^\infty}+\underset {0<\vert x- y\vert<1}{\sup}\frac{\vert v(x)-v(y)\vert}{\vert x-y\vert  \log \frac{e}{\vert x-y \vert} }<+\infty.
$$
\end{definition}
We have the following classical estimate which is a simple consequence of the embedding $B_{\infty,\infty} ^1\subset LL$ combined with Bernstein inequality and Biot-Savart law \eqref{b-s}.
\begin{lemma}\label{lem3}
For any finite $a> 1$ we have
$$
\Vert v\Vert_{LL}\leq C \Vert \omega\Vert_{L^a\cap L^\infty},
$$
with $C$ depending only on $a$.
\end{lemma}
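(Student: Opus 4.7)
The plan is to establish the inequality in two independent steps, following the hint given just after the statement: (i) prove the embedding $C^1\hookrightarrow LL$, where $C^1$ is the Zygmund space from Section 2 with norm $\|v\|_1=\sup_{q\geq-1}2^q\|\Delta_q v\|_{L^\infty}$, and (ii) bound $\|v\|_1+\|v\|_{L^\infty}$ by $\|\omega\|_{L^a\cap L^\infty}$ using the Biot--Savart formula \eqref{b-s} together with Bernstein's inequalities from Lemma \ref{br}.

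For step (i), I would fix $x\neq y$ with $0<|x-y|<1$ and pick $N\in\NN$ so that $2^{-N}\simeq|x-y|$, i.e.\ $N\simeq\log(e/|x-y|)$. Writing $v=S_Nv+\sum_{q\geq N}\Delta_q v$, Bernstein gives $\|\nabla\Delta_q v\|_{L^\infty}\leq C2^q\|\Delta_q v\|_{L^\infty}\leq C\|v\|_1$, so the mean value theorem applied dyadically yields $|S_Nv(x)-S_Nv(y)|\leq C\,N\,|x-y|\,\|v\|_1$; for the high-frequency tail, the geometric sum $\sum_{q\geq N}\|\Delta_q v\|_{L^\infty}\leq C2^{-N}\|v\|_1$ controls $\|(\mathrm{Id}-S_N)v\|_{L^\infty}$. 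Adding these two estimates produces precisely the log-Lipschitz modulus, and the bound $\|v\|_{L^\infty}\leq C\|v\|_1$ closes the embedding.

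For step (ii), the high-frequency blocks $q\geq 0$ are routine: the Biot--Savart law on the Fourier side reads $\widehat v(\xi)=\frac{i\xi^\perp}{|\xi|^2}\widehat\omega(\xi)$, and since $\Delta_q$ has annular spectrum of scale $2^q$, one can view $\Delta_q v$ as a convolution of $\tilde\Delta_q\omega$ against a kernel of $L^1$-mass $\lesssim 2^{-q}$ (a dilation of a fixed Schwartz function), which gives $\|\Delta_q v\|_{L^\infty}\leq C2^{-q}\|\omega\|_{L^\infty}$ and hence $\sup_{q\geq0}2^q\|\Delta_q v\|_{L^\infty}\leq C\|\omega\|_{L^\infty}$. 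For the low-frequency block $\Delta_{-1}v$ and the $\|v\|_{L^\infty}$ contribution, I would go back to real space and use $v=K\ast\omega$ with $K(x)=\frac{x^\perp}{2\pi|x|^2}$, splitting $K=K\mathbf 1_{\{|x|\leq 1\}}+K\mathbf 1_{\{|x|>1\}}$. The first summand lies in $L^1(\RR^2)$ (since $\int_{|x|\leq1}|x|^{-1}dx<\infty$ in two dimensions), so Young's inequality yields a contribution bounded by $\|\omega\|_{L^\infty}$; the second summand lies in $L^{a'}(\RR^2)$ provided $a'>2$, i.e.\ $a<2$, and Young's inequality again yields a contribution bounded by $C_a\|\omega\|_{L^a}$.

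The only genuinely delicate point is this low-frequency $L^\infty$ bound on $v$: both the $L^\infty$ and the $L^a$ controls of $\omega$ are needed because the Biot--Savart kernel has a non-integrable $1/|x|$ tail, and the best one can hope for through this direct splitting is $a<2$ (with the constant $C_a$ blowing up as $a\uparrow 2$). Modulo this point, which dictates the dependence of $C$ on $a$, the proof is a clean assembly of the Littlewood--Paley calculus of Section 2 with the explicit Biot--Savart kernel, and no further essential obstacle arises.
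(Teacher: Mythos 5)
Your proof is correct and follows exactly the route the paper indicates for this lemma (the paper gives no details beyond citing the embedding $B^1_{\infty,\infty}\subset LL$ together with Bernstein's inequality and the Biot--Savart law), and your Littlewood--Paley implementation of both steps is sound. Your observation that the low-frequency $L^\infty$ bound on $v$ forces $a<2$ is also accurate: the far-field kernel $|x|^{-1}\mathbf{1}_{\{|x|>1\}}$ belongs to no $L^{a'}$ with $a'\le 2$, so this restriction is genuinely needed and is consistent with the standing hypothesis $1<a<2$ of Theorems \ref{the2} and \ref{th2} where the lemma is actually invoked, meaning the statement ``any finite $a>1$'' should be read with that proviso.
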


\quad It is well-known, thanks to  Osgood lemma, that a vector field $v$ belonging to the space  ${LL}$ has a unique global  flow map $\psi$ in the class of continuous functions on the   space and time variables. This map is defined by the nonlinear integral equation,
$$
 \psi(t,x)=x+\int_0^tv(\tau,\psi(\tau,x))d\tau \quad\forall(t,x)\in\RR_+\times\RR^2.
 $$
  For more details about this issue  we refer the reader to Section 3.3 in \cite{B-C-D}. 

The next  result deals with some general aspect of  the dynamics of a given set through the  flow associated to a vector field in the $LL$ space. Such result was proved in \cite{C}.

\begin{lemma}\label{s2lem1}
 Let $A_0$ be a subset of $\RR^2$ and $v$ be a vector field belonging  to $L^1_{loc}(\RR_+ ;{LL})$. We denote by $\psi(t)$ the flow associated to this vector field.
 Then setting $A(t) \triangleq\psi(t,A_0)$ we get,
$$
\psi\big(t,(A_0)_h^c\big) \subset \big(A(t)\big)^c_{\delta_t(h)},\quad \textnormal{with}\quad \delta_t(h) \triangleq h^{\exp\int_0^t\Vert v(\tau)\Vert_{LL}d\tau}.
$$
For all $0 \leq \tau \leq t$,
$$
\psi\big(\tau,\psi^{-1}\big(t,(A_t)_h^c)\big) \subset \big(A(\tau)\big)^c_{\delta_{\tau,t}(h)},\quad \textnormal{with}\quad \delta_{\tau,t}(h) \triangleq h^{\exp\int_\tau^t\Vert v(\sigma)\Vert_{LL}d\sigma}.
$$
\end{lemma}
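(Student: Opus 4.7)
The strategy is to reduce both set-theoretic inclusions to an Osgood-type two-sided modulus of continuity for the flow of a log-Lipschitz vector field, of the form
$$C_t^{-1}\,|x-y|^{\exp V(t)}\leq |\psi(t,x)-\psi(t,y)|\leq C_t\,|x-y|^{\exp(-V(t))},\qquad V(t)\triangleq\int_0^t\Vert v(\tau)\Vert_{LL}\,d\tau,$$
valid whenever $|x-y|$ is small enough. Granting the lower half, the first inclusion is essentially immediate: for $x_0\in(A_0)_h^c$ and an arbitrary $y_0\in A_0$ we have $|x_0-y_0|\geq h$, hence $|\psi(t,x_0)-\psi(t,y_0)|\geq C_t^{-1}h^{\exp V(t)}$, and taking the infimum over $y_0\in A_0$ gives $d(\psi(t,x_0),A(t))\geq C_t^{-1}\delta_t(h)$. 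The prefactor can be absorbed by restricting $h$ to the small regime (in the spirit of the assumption $0<h\leq e^{-1}$ already used in Definition \ref{Defs}).

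To derive the Osgood bound, set $h(s)\triangleq|\psi(s,x)-\psi(s,y)|$. Differentiating in time and using the very definition of $\Vert\cdot\Vert_{LL}$, one obtains, as long as $h(s)<1$, the differential inequality
$$|h'(s)|\leq \Vert v(s)\Vert_{LL}\,h(s)\log\frac{e}{h(s)}.$$
Introducing the auxiliary function $\phi(s)\triangleq\log\log\bigl(e/h(s)\bigr)$ linearises this into $|\phi'(s)|\leq \Vert v(s)\Vert_{LL}$, so integrating on $[0,t]$ yields $|\phi(t)-\phi(0)|\leq V(t)$. Exponentiating twice then produces the claimed two-sided control of $h(t)$ in terms of $h(0)^{\exp(\pm V(t))}$.

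For the second inclusion, I would reduce it to the first one via time reversal. Set
$$\tilde\psi(s,y)\triangleq\psi\bigl(t-s,\psi^{-1}(t,y)\bigr),\qquad s\in[0,t-\tau].$$
A direct computation shows that $\tilde\psi$ is the flow, starting from the identity at $s=0$, of the vector field $u(s,x)\triangleq-v(t-s,x)$, which satisfies $\Vert u(s)\Vert_{LL}=\Vert v(t-s)\Vert_{LL}$; in particular
$$\int_0^{t-\tau}\Vert u(s)\Vert_{LL}\,ds=\int_\tau^t\Vert v(\sigma)\Vert_{LL}\,d\sigma,$$
and one has $\tilde\psi(s,A(t))=A(t-s)$. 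Applying the already-established first inclusion to the triple $(\tilde\psi,u,A(t))$ at time $s=t-\tau$ produces exactly
$$\psi\bigl(\tau,\psi^{-1}(t,(A(t))_h^c)\bigr)=\tilde\psi\bigl(t-\tau,(A(t))_h^c\bigr)\subset \bigl(A(\tau)\bigr)_{\delta_{\tau,t}(h)}^c,$$
which is the second claimed inclusion.

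The main technical nuisance I anticipate is propagating the constant $C_t$ together with the validity regime $h(s)<1$ uniformly in $s\in[0,t]$, so that the bound on $\delta_t(h)$ truly takes the clean form stated in the lemma rather than the slightly weaker $C_t^{-1}h^{\exp V(t)}$; since one is free to restrict attention to small $h$, this is essentially bookkeeping.
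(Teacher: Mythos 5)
Your proof is correct and is essentially the standard argument: the paper itself gives no proof of this lemma (it defers to Chemin's book \cite{C}), and the proof there is precisely your Osgood estimate $|\psi(t,x)-\psi(t,y)|\geq |x-y|^{\exp V(t)}$ obtained by linearising via $\log\log$, followed by the time-reversal reduction for the second inclusion. The only caveat is the one you already flag: with the paper's normalisation of $\Vert\cdot\Vert_{LL}$ (using $\log\frac{e}{|x-y|}$) the double exponentiation produces a harmless multiplicative constant and the restriction $h\leq e^{-1}$, which is absorbed exactly as in the rest of the paper (where such estimates carry an unspecified constant $C$ in the exponent).
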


%\begin{remark}
%Thanks to the Cald\'eron-Zigmm\"und inequality and by the interpolation estimate, one has
%\begin{eqnarray*}
%\Vert \nabla v\Vert_{LL(\Sigma)}\leq \Vert \nabla v\Vert_{L(\Sigma)}+\Vert \omega\Vert_{L^a\cap L^\infty}.
%\end{eqnarray*}
%\end{remark}
Next, we discuss the regularity  persistence  for a  transport model and  the proof can be found in \cite{C}. 
\begin{proposition}\label{p11}
Let $\varepsilon\in(-1,1)$, $a\in(1,+\infty)$ and $v$ be a smooth divergence-free vector field.  Set
$$
W(t)\triangleq\Big(\Vert\nabla v(t)\Vert_{L(\Sigma_t)}+\Vert \omega(t)\Vert_{L^a \cap L^\infty} \Big)\exp\bigg(\int_0^t\Vert v(\tau)\Vert_{LL}d\tau\bigg), \quad \Sigma_t=\psi(t,\Sigma_0).
$$
Let  $f\in L^\infty_{loc}([0,T],C^\varepsilon)$ be  a solution of transport model,
\begin{equation*}
\left\{ \begin{array}{ll}
\partial_{t}f+v\cdot\nabla f =g, &\\
f_{| t=0}=f_{0},
\end{array} \right.
\end{equation*} 
where $g=g_1+g_2$ is given and belongs to  $L^1([0,T]; C^\varepsilon)$.
 We assume that  $\hbox{supp }f_0\subset (\Sigma_0)_h^c$ and  $\hbox{supp }g(t)\subset (\Sigma_t)_{\delta(t,h)}^c$ for any $t\in[0,T]$, and for some small $h$
$$
\Vert g_2(t)\Vert_\varepsilon\leq -CW(t)\Vert f(t)\Vert_\varepsilon\log h.
$$
Then  the following inequality holds true
$$
\Vert f(t)\Vert_\varepsilon\leq \Vert f_0\Vert_\varepsilon h^{-C\int_0^t W(\tau)d\tau}+\int_0^t h^{-C\int_\tau^t W(\tau')d\tau'}\Vert g_1(\tau)\Vert_\varepsilon d\tau.
$$
Here the constant $C$ is universal and does not depend  on $h$.
 \end{proposition}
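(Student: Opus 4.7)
The plan is to pass to Littlewood--Paley decomposition, derive an ODE for $2^{q\varepsilon}\|\Delta_q f(t)\|_{L^\infty}$ along the flow of $v$, and close it by exploiting that both $f$ and $g$ are frozen strictly away from the singular set $\Sigma$, where $v$ regains a Lipschitz character whose size can be quantified in terms of $W(t)$ and $\log h$.

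The first step is a support propagation statement. Solving the transport equation along characteristics,
$$f(t,\psi(t,x))=f_0(x)+\int_0^t g(\tau,\psi(\tau,x))d\tau,$$
and combining the hypotheses $\operatorname{supp} f_0\subset(\Sigma_0)_h^c$ and $\operatorname{supp} g(\tau)\subset(\Sigma_\tau)_{\delta(\tau,h)}^c$ with the two inclusions in Lemma~\ref{s2lem1}, one deduces $\operatorname{supp} f(t)\subset(\Sigma_t)_{\delta(t,h)}^c$. On this set, the definition of the space $L(\Sigma_t)$ together with the explicit formula $\log\delta(t,h)=\log h\cdot\exp\!\left(\int_0^t\|v(\sigma)\|_{LL}d\sigma\right)$ and the definition of $W(t)$ give
$$\|\nabla v(t)\|_{L^\infty(\operatorname{supp} f(t))}\leq -\|\nabla v(t)\|_{L(\Sigma_t)}\log\delta(t,h)\leq -CW(t)\log h.$$

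The second step is to localize the transport equation in frequency. Applying $\Delta_q$ yields
$$\partial_t\Delta_q f+v\cdot\nabla\Delta_q f=\Delta_q g+[v,\Delta_q]\cdot\nabla f,$$
and integrating along the characteristics of $v$ produces
$$\|\Delta_q f(t)\|_{L^\infty}\leq\|\Delta_q f_0\|_{L^\infty}+\int_0^t\!\!\bigl(\|\Delta_q g(\tau)\|_{L^\infty}+\|[v(\tau),\Delta_q]\cdot\nabla f(\tau)\|_{L^\infty}\bigr)d\tau.$$
The crux is a Chemin-type commutator estimate, valid for $\varepsilon\in(-1,1)$, which because both $\Delta_q f$ and the convolution kernel of $\Delta_q$ are essentially concentrated where $f$ lives gives
$$\|[v,\Delta_q]\cdot\nabla f\|_{L^\infty}\lesssim 2^{-q\varepsilon}\,\|\nabla v\|_{L^\infty(\operatorname{supp} f)}\,\|f\|_\varepsilon.$$
Localizing this commutator to the effective support of $f$, rather than bounding it by the globally unbounded $\|\nabla v\|_{L^\infty}$, is the main technical point; it is handled as in~\cite{C} by splitting the $\Delta_q$-kernel according to whether the displacement remains inside the Lipschitz zone $(\Sigma_\tau)^c_{\delta(\tau,h)}$ and using the vanishing mean of $\varphi$ to absorb the tail.

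Multiplying the previous inequality by $2^{q\varepsilon}$, taking the supremum in $q$, and inserting the bound on $\|\nabla v(t)\|_{L^\infty(\operatorname{supp} f(t))}$, one obtains
$$\|f(t)\|_\varepsilon\leq \|f_0\|_\varepsilon-C\log h\int_0^t W(\tau)\|f(\tau)\|_\varepsilon d\tau+\int_0^t\|g(\tau)\|_\varepsilon d\tau.$$
Splitting $g=g_1+g_2$ and absorbing the hypothesized bound $\|g_2(\tau)\|_\varepsilon\leq -CW(\tau)\|f(\tau)\|_\varepsilon\log h$ into the Gronwall coefficient, the integrating-factor formula $\exp\!\left(-C\log h\int_\tau^t W\right)=h^{-C\int_\tau^t W}$ delivers precisely
$$\|f(t)\|_\varepsilon\leq \|f_0\|_\varepsilon\, h^{-C\int_0^t W(\tau)d\tau}+\int_0^t h^{-C\int_\tau^t W(\tau')d\tau'}\|g_1(\tau)\|_\varepsilon d\tau,$$
with a universal constant $C$, since all $h$-dependence enters through $-\log h$ in the exponent.
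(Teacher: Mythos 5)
The paper itself does not prove this proposition but defers to Chemin's book, and your skeleton --- support propagation via Lemma \ref{s2lem1}, Littlewood--Paley localization of the transport equation, a commutator estimate localized to the region where $f$ lives, and Gronwall with the integrating factor $\exp\big(-C\log h\int_\tau^t W\big)=h^{-C\int_\tau^t W}$ --- is exactly the standard route, and your final Gronwall step is carried out correctly. The one step that is not literally correct as displayed is the commutator bound
$\Vert [v,\Delta_q]\cdot\nabla f\Vert_{L^\infty}\lesssim 2^{-q\varepsilon}\Vert\nabla v\Vert_{L^\infty(\textnormal{supp}\,f)}\Vert f\Vert_\varepsilon$.
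The convolution kernel of $\Delta_q$ is a Schwartz function, not compactly supported, so even at a point $x\in\textnormal{supp}\,f(t)$ the commutator integrates differences $v(x)-v(y)$ over $y$ ranging into the bad zone $(\Sigma_t)_{\delta(t,h)}$ where $\nabla v$ is unbounded; no splitting of the kernel can make the far contribution disappear into $\Vert\nabla v\Vert_{L^\infty(\textnormal{supp}\,f)}$ alone. The tail has to be paid for with the decay of the kernel together with control of $v$ itself off the support, i.e.\ with $\Vert v\Vert_{LL}$ and $\Vert\omega\Vert_{L^a\cap L^\infty}$ --- this is precisely why the proposition builds $\Vert\omega(t)\Vert_{L^a\cap L^\infty}$ and the factor $\exp\big(\int_0^t\Vert v\Vert_{LL}\big)$ into $W(t)$. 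If your displayed estimate were true, the statement would hold with the smaller weight $\Vert\nabla v\Vert_{L(\Sigma_t)}$ alone, which it does not.

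The repair is standard and does not change your conclusion: after splitting the kernel at the scale $\delta(t,h)=$ dist$(\textnormal{supp}\,f(t),\Sigma_t)$, the near part is controlled by $-\Vert\nabla v(t)\Vert_{L(\Sigma_t)}\log\delta(t,h)$ as in your first step, and the far part by $C\big(\Vert v(t)\Vert_{L^\infty}+\Vert v(t)\Vert_{LL}\big)\log\frac{1}{\delta(t,h)}$ (plus, for the block $\Delta_{-1}$, a direct use of $\Vert v\Vert_{L^\infty}\lesssim\Vert\omega\Vert_{L^a\cap L^\infty}$). Both contributions are $\lesssim -W(t)\log h$ once one inserts $\log\delta(t,h)=\log h\cdot\exp\int_0^t\Vert v\Vert_{LL}$, so the correct intermediate display is
$\Vert [v,\Delta_q]\cdot\nabla f\Vert_{L^\infty}\lesssim -2^{-q\varepsilon}\,W(t)\log h\,\Vert f\Vert_\varepsilon$,
after which your integral inequality and the Gronwall argument go through verbatim. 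So the gap is in the justification of one display, not in the architecture of the proof; you should either state the commutator lemma in the corrected form or carry out the kernel splitting explicitly.
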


 Next, we recall the following definition introduced in \cite{C}.
\begin{definition}\label{def11}
%%%%%%%%%%%%%%%
Let $\Sigma$  be a closed subset of $\RR^d$ and $\Xi = (\alpha, \beta, \gamma)$ be a triplet of real numbers. We consider a family $\mathcal{X} = (X_{\lambda,h})_{(\lambda,h)\in \Lambda\times ]0,e^{-1}]}$ of vector fields  belonging to $C^\varepsilon$ as well as their divergences, with $\varepsilon\in]0,1[$ and we denote by $\mathcal{X}_h =(X_{\lambda,h})_{\lambda\in\Lambda}$.

The  familly $\mathcal{X}$ will be said  $\Sigma-$admissible of order $\Xi$ if and only if the following properties are satisfied:
\begin{equation*}
\forall (\lambda, h)\in\Lambda\times ]0,e^{-1}],\ \textnormal{supp}X_{\lambda,h}\subset\Sigma_{h^\alpha}^c,
\end{equation*}
\begin{equation*}
 \inf_{ h\in]0,e^{-1}]}h^\gamma I(\Sigma_h,\mathcal{X}_h)>0,
\end{equation*}
\begin{equation*}
 \sup_{ h\in]0,e^{-1}]}h^{-\beta}N_\varepsilon(\Sigma_h,\mathcal{X}_h)<\infty,
\end{equation*}
where we adopt the following notation: for  $\eta\geq h^\alpha$,
$$
I(\Sigma_\eta,\mathcal{X}_h)\triangleq \inf_{x\in\Sigma_\eta^c}\sup_{\lambda\in\Lambda}\vert X_{\lambda,h}(x)\vert\quad\textnormal{and}\quad N_\varepsilon(\Sigma_\eta,\mathcal{X}_h)\triangleq\sup_{\lambda\in\Lambda}\frac{\tilde{\Vert} X_{\lambda,h}\Vert_{\varepsilon}}{I(\Sigma_\eta,\mathcal{X}_h)}\cdot
$$
\end{definition}
\begin{remark}
Concretely,   the family of vector fields $\mathcal{X}$ that we shall work with  vanishes  near the singular set and therefore we should get   $\gamma, \beta<0$. Moreover the parameter $\alpha>1.$
\end{remark} 
Similarly to the smooth patches we shall introduce for  $\eta\geq h^\alpha$,
\begin{equation}\label{defre}
\Vert u\Vert^{\varepsilon+k}_{\Sigma_\eta,\mathcal{X}_h}\triangleq  N_\varepsilon(\Sigma_\eta,\mathcal{X}_h)\sum_{\vert \alpha\vert\leq  k} \Vert  \partial^\alpha u\Vert_{L^\infty}+\displaystyle{\sup_{\lambda\in\Lambda}}\, \frac{\Vert \pxl u\Vert_{\varepsilon+k-1}}{I(\Sigma_\eta,\mathcal{X}_h)}.
\end{equation}
\subsection{General statement}
We intend now to extend the result of Theorem \ref{the2} and see in turn how to deduce  the result of this theorem. The proof of the general statement will be carried out  in multiple steps and will be postponed in the next subsections. 
\begin{theorem}\label{th2}
Let $0 <\varepsilon <1$, $0<r<e^{-1}$,  $1<a<2$ and $\Sigma_0$ be a closed subset of the plane. Let $v_0$ be a divergence-free vector field with  vorticity  $\omega_ 0$ belonging to $ L^{a}\cap L^\infty$   and  $\rho_0$ be a real-valued function in $W^{1,a}\cap W^{1,\infty}$ and taking constant  value on $(\Sigma_0)_r$.  
%and with gradient $\nabla \rho_0\in L^a$.
 Consider  $\mathcal{X}_0=(X_{0,\lambda,h})_{(\lambda,h)\in \Lambda\times ]0,e^{-1}]}$  a  family of vector fields of class $C^\varepsilon$ as well as their divergences and suppose that this family is  $\Sigma_0$-admissible of order $\Xi_0 = (\alpha_0, \beta_0, \gamma_0)$ such that
\begin{equation*}
\sup_{h\in]0,e^{-1}]}h^{-\beta_0}\Vert \rho_0\Vert^{\varepsilon+1}_{(\Sigma_0)_h,(\mathcal{X}_0)_h}+\sup_{h,\in]0,e^{-1}]}h^{-\beta_0}\Vert \omega_0\Vert^{\varepsilon}_{(\Sigma_0)_h,(\mathcal{X}_0)_h}<\infty.
\end{equation*}
 Then, there exists $T>0$ such that the Boussinesq system \eqref{omega} has a unique solution
 $$(\omega,\rho)\in L^\infty\big([0,T],L^a\cap L^\infty\big)\times L^\infty\big([0,T], W^{1,a}\cap W^{1,\infty}\big).
 $$  
 In addition, we have
$$
\sup_{h\in(0,e^{-1}]}\frac{\Vert \nabla v(t)\Vert_{L^\infty(\Sigma(t)_h^c)}}{-\log h}\in L^\infty([0,T]),
$$
where $\Sigma(t)=\psi(t,\Sigma_0)$.
\end{theorem}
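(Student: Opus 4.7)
The strategy follows the smooth-patch case (Theorem \ref{th1}) adapted to the singular, degenerate setting, and proceeds by a priori estimates, regularized existence, and uniqueness. The first step is to propagate the $L^p$ norms. The vorticity equation yields
$$\|\omega(t)\|_{L^a \cap L^\infty} \leq \|\omega_0\|_{L^a \cap L^\infty} + \int_0^t \|\nabla \rho(\tau)\|_{L^a \cap L^\infty}\, d\tau,$$
while for $\nabla \rho$ I exploit the crucial fact that $\rho_0$ is constant on $(\Sigma_0)_r$, so by transport $\rho(t)$ is constant on $\psi(t,(\Sigma_0)_r) \supset (\Sigma_t)_{\delta_t(r)}$, with $\delta_t(r)$ furnished by Lemma \ref{s2lem1}. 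Consequently $\nabla \rho(t)$ is supported in $(\Sigma_t)_{\delta_t(r)}^c$ and the transport equation $(\partial_t + v \cdot \nabla) \nabla \rho = \nabla v \cdot \nabla \rho$ only encounters $\nabla v$ away from $\Sigma_t$, where by Definition \ref{Defs} $\|\nabla v\|_{L^\infty((\Sigma_t)_{\delta_t(r)}^c)} \leq \|\nabla v(t)\|_{L(\Sigma_t)}\,|\log \delta_t(r)|$. Gronwall then controls $\|\nabla \rho(t)\|_{L^p}$ by $\int_0^t \|\nabla v(\tau)\|_{L(\Sigma_\tau)} |\log \delta_\tau(r)|\, d\tau$.

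Next, to propagate the tangential regularity, I transport the family $\mathcal{X}_0$ by the flow as in \eqref{p1}; using Lemma \ref{s2lem1} the order $\Xi_t = (\alpha_t, \beta_t, \gamma_t)$ of the admissible family deteriorates in a controlled way with the integrated log-Lipschitz norm of $v$, e.g.\ $\alpha_t \leq \alpha_0 \exp\!\int_0^t \|v(\tau)\|_{LL}d\tau$. Since $\partial_{X_{t,\lambda,h}}$ commutes with $\partial_t + v\cdot\nabla$, the tangential derivatives satisfy
$$(\partial_t + v \cdot \nabla) \partial_{X_{t,\lambda,h}} \rho = 0, \quad (\partial_t + v \cdot \nabla) \partial_{X_{t,\lambda,h}} \omega = \partial_1 (\partial_{X_{t,\lambda,h}}\rho) - \partial_{\partial_1 X_{t,\lambda,h}} \rho,$$
and since the vector fields vanish in $(\Sigma_t)_{h^{\alpha_t}}$ and $\rho$ is constant there, all of these quantities are supported in $(\Sigma_t)_{h^{\alpha_t}}^c$. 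Combining Proposition \ref{p11} with the vector-field evolution analogous to \eqref{Xt} yields a coupled integral inequality for $\tilde{\Vert}X_{t,\lambda,h}\Vert_\varepsilon$, $\Vert\partial_{X_{t,\lambda,h}}\omega(t)\Vert_{\varepsilon-1}$ and $\Vert\partial_{X_{t,\lambda,h}}\rho(t)\Vert_\varepsilon$, all carrying weight factors of the form $h^{-C\int_0^t W(\tau)d\tau}$. The Lipschitz bound on $v$ away from $\Sigma_t$ is in turn recovered from the singular analogue of Theorem \ref{propoo1} proved in \cite{C}, of the form $\|\nabla v(t)\|_{L(\Sigma_t)} \lesssim \|\omega\|_{L^a \cap L^\infty}\log\bigl(e + \|\omega\|^\varepsilon_{(\Sigma_t)_{h^{\alpha_t}},(\mathcal{X}_t)_h}/\|\omega\|_{L^\infty}\bigr)$.

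The heart of the proof, and its main obstacle, is closing this coupled nonlinear system: the tangential norm at scale $h$ blows up as $h\to 0$ unless the exponent $C\int_0^t W(\tau)d\tau$ stays strictly smaller than $|\beta_0|/\alpha_0$, and since $W$ depends logarithmically on the very tangential norm being estimated and is moreover inflated by $|\log\delta_\tau(r)|$ through the Lipschitz estimate for $\nabla\rho$, the Gronwall argument is self-referential and produces a triple exponential in time. Choosing $T$ sufficiently small absorbs this feedback and yields a lifespan bound in the spirit of \eqref{lif1}, with an extra layer that accounts for $\alpha_t, \beta_t$. Once the a priori estimates hold on $[0,T]$, existence follows as in Section 3.3 by smoothing the initial data with $S_n$, invoking Chae--Nam's local theory and the blow-up criterion \eqref{bc}; the uniform tangential bounds on the regularized data are provided by estimates analogous to those recalled for the smooth case, applied scale by scale in $h$. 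Uniqueness is proved in $L^\infty([0,T]; L^q\cap W^{1,\infty}((\Sigma_t)_{\delta_t(r)}^c))$ for some $2<q<\infty$ by the comparison argument of Section 3.4, using that $\nabla p$ is controlled via the continuity of Riesz transforms on $L^q$ and that the Lipschitz norms of the velocities appearing in the Gronwall factor are finite precisely on the supports of $\nabla\rho$ and $\nabla\rho_0$, so no singular contribution from $\Sigma_t$ is encountered.
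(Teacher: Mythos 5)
Your a priori estimates follow the paper's scheme faithfully: the $L^p$ bounds exploiting the constancy of $\rho_0$ on $(\Sigma_0)_r$ together with Lemma \ref{s2lem1} and the $L(\Sigma_t)$ norm, the transported families with weights $h^{-C\int_0^t W(\tau)d\tau}$, the commutation identities, and the closing of the self-referential logarithmic loop by choosing $T$ small are exactly what Propositions \ref{peor}, \ref{prop22} and \ref{prop220} do. The two final steps, however, contain genuine gaps.

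First, for existence you regularize the data with $S_n$ as in the smooth case. But $S_n\rho_0$ is a Fourier truncation, hence nonlocal, and is not constant on any neighborhood of $\Sigma_0$; the regularized solutions therefore fail the very hypothesis on which the whole a priori machinery rests. The paper smooths the density instead with compactly supported mollifiers $\rho_{0,n}=\phi_n\ast\rho_0$, so that $\rho_{0,n}$ stays constant on $(\Sigma_0)_{r-1/n}\supset(\Sigma_0)_{r/2}$ for $n$ large, and controls $\partial_{X_{0,\lambda}}\rho_{0,n}$ through the commutator estimate of Proposition \ref{comu}. Second, and more seriously, your uniqueness argument does not close. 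You claim the Lipschitz comparison of the smooth case applies because the Lipschitz norms are only needed on the support of $\nabla\rho$; this is not so. In the equation for $v=v_1-v_2$ the term $v\cdot\nabla v_1$ and the pressure estimate require $\Vert\nabla v_1\Vert_{L^\infty}$ and $\Vert\nabla v_2\Vert_{L^\infty}$ wherever $v$ is supported, and $v$ has no reason to vanish near $\Sigma_t$, where $\nabla v_i$ may blow up logarithmically. The paper explicitly abandons that route and runs Yudovich's original argument: an $L^2$ energy estimate (after subtracting a stationary profile $\sigma$ so that $v\in\sigma+L^2$), the interpolation $\Vert v\Vert_{L^{2q'}}^2\lesssim\Vert v\Vert_{L^\infty}^{2/q}\Vert v\Vert_{L^2}^{2/q'}$ combined with the Calder\'on--Zygmund bound $\Vert\nabla v_1\Vert_{L^q}\lesssim q\Vert\omega_1\Vert_{L^a\cap L^\infty}$, and a passage to the limit $q\to\infty$. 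Without some such device your Gronwall factor is infinite and uniqueness is not established.
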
 
$\bullet$ {\it Proof of Theorem $\ref{the2}$}. 
Let us briefly show how this result leads to Theorem  \ref{the2} stated in the Introduction.
Let $\Omega_0$ be a bounded open set whose   boundary belongs to $C^{\varepsilon+1}$ outside the closed singular set $\Sigma_0.$ In view of the Definition \ref{bord} we may show the existence of a neighborhood $V_0$ of $\partial\Omega_0$ and a real function $f_0\in C^{\varepsilon+1}$ such that  $\partial\Omega_0=f_0^{-1}(0)\cap V_0$ and whose gradient does not vanish on $V_0\backslash\Sigma_0$.
 We also assume that there exists a positive number  $\tilde{\gamma}_0>0$ such that for all $x\in V_0$,
\begin{equation}\label{H}
  \vert \nabla f_0(x)\vert\geq d(x,\Sigma_0)^{\tilde{\gamma}_0}.
\end{equation}
This means that the curves defining the boundary of $\Omega_0$ are not tangent to one another at infinite order at the singular points.
Consider $(\theta_h)_{h\in(0,e^{-1}]}$  a family of infinitely differentiable functions, supported in $(\Sigma_0)^c_{h/2} $ and taking the value 1 on  the set $(\Sigma_0)^c_h$ and satisfying  for all $h\in]0,e^{-1}]$  and any positive real number $r$,
 $$
 \Vert\theta_h\Vert_r\leq C_rh^{-r}.
 $$
 The existence of such functions can be proved by dilation.
Consider $\tilde{\alpha}$  a  function of class $C^\infty$   supported in $V_0$ and  taking the value $1$ on $V_1$, where $V_1$ is a neighborhood of $\partial\Omega_0$ such that $V_1\subset\subset V_0$. We define the family $\mathcal{X}_0=(X_{0,\lambda,h})_{\lambda\in\{0,1\},h\in]0,e^{-1}]}$ of vector fields by:
$$
X_{0,0,h}=\nabla^\perp(\theta_h f_0),\quad X_{0,1,h}=(1-\tilde{\alpha})\begin{pmatrix}1\\ 0\end{pmatrix}.
$$
It comes to see if the  family $\mathcal{X}_0$ is $\Sigma_0$-admissible of  certain order $\Xi_0 = (\alpha_0, \beta_0, \gamma_0)$.
The first vector field is of class $C^\varepsilon$ with zero divergence and the second is $C^\infty$. On other hand, by construction, $\hbox{supp }X_{0,i,h}\subset (\Sigma_0)_{h/2}^c \subset (\Sigma_0)_{h^\alpha_0}^c$ with $\alpha_0 > 1$. Moreover, thanks to the hypothesis \eqref{H} we may choose $\gamma_0=-\tilde{\gamma}_0$. Finally, we easily  show that
$$
\tilde{\Vert} X_{0,\lambda,h}\Vert_\varepsilon\leq Ch^{-\varepsilon-1}.
$$
Hence, it suffices to take  $\beta_0=\gamma_0-\varepsilon-1$ and $\Xi_0 = (\alpha_0, \beta_0, \gamma_0)$.
Besides, we have
$$
\partial_{X_{0,0,h}}\omega_0=\theta_h\partial_{\nabla^\perp f_0}\omega_0+f_0\partial_{\nabla^\perp \theta_h}\omega_0.
$$
First we observe that  the derivative of $\omega_0$ in  the direction $\nabla^\perp f_0$ is zero and second $\partial_{\nabla^\perp \theta_h}\omega_0$ is a distribution of order zero supported on the boundary $\partial\Omega_0$. As the function  $f_0$ vanishes on $\partial\Omega_0$ then 
$$
f_0\partial_{\nabla^\perp \theta_h}{1}_{\Omega_0}=0
$$ 
Thus we deduce that $\partial_{X_{0,0,h}}\omega_0=0$.  For the second vector field we use that  $1-\tilde{\alpha}$ vanishes on a small neighborhood of $\partial\Omega_0$ and therefore  $\partial_{X_{0,1,h}}\omega_0=0$.
 It remains to check the regularity assumption on the density $\rho_0. $  This function is constant in a neighborhood of $\Sigma_0$ and thus $\, \nabla\rho_0\nabla^\perp\theta_h=0$ and moreover $\partial_{\nabla^\perp f_0}\rho_0\in C^\varepsilon$.  It is then immediate that $\partial_{X_{0,0,h}}\rho_0\in C^\varepsilon$.
Hence the hypothesis of \mbox{Theorem \ref{th2}} are satisfied and the local well-posedness  result  of Theorem \ref{the2} is now  established. To infer that the boundary $\Omega_t$ is a curve of class $C^{1+\varepsilon}$ outside $\Sigma(t)$ we argue as  in the case of regular vortex patches seen in the previous section.
\subsection{A priori estimates} 
This section is devoted to some a priori estimates of $L^p$ type for both the density  and the vorticity  functions. As the velocity may loose regularity and becomes rough close   to  the singular set, our assumption to work with constant density near this  set  seems to be crucial and  unavoidable  in our analysis. Without this  assumption the problem remains open and may be one should expect to propagate the regularity with some loss.
\begin{proposition}\label{peor}
Let  $\Sigma_0$ be a closed set of $\RR^2$ and $(v,\rho)$ be a smooth solution of the  system \eqref{B} %associated to the initial data $(v_0,\rho_0)$ and 
defined on the time interval $[0,T]$. We suppose that $\rho_0$ is constant in the set $(\Sigma_0)_r=\big\{x\in \RR^2;\, d(x,\Sigma_0)\leq r\}$ for some $r\in (0, e^{-1})$. Then for all $p\in[1,+\infty]$ and for any $t\in [0,T]$ we have
\begin{equation*}
\Vert\nabla \rho(t)\Vert_{L^p}\leq \Vert\nabla \rho_0\Vert_{L^p} r^{-C\int_0^t W(\tau)d\tau}
\end{equation*}
and
\begin{eqnarray*}
\Vert\omega(t)\Vert_{L^p}&\leq &\Vert\omega_0\Vert_{L^p}+t\,\Vert\nabla \rho_0\Vert_{L^p}r^{-C\int_0^t W(\tau)d\tau},
\end{eqnarray*}
with $C$ an absolute constant.
\end{proposition}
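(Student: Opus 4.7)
The key observation is that since $\rho$ is transported by the flow, the region where $\rho_0$ is constant moves rigidly with the fluid: $\rho(t,\cdot)$ remains constant on $\psi(t,(\Sigma_0)_r)$, so $\nabla\rho(t)$ is supported in $\psi(t,(\Sigma_0)_r^c)$. Invoking Lemma~\ref{s2lem1} with $A_0=\Sigma_0$, I would obtain the quantitative containment
\begin{equation*}
\mathrm{supp}\,\nabla\rho(t)\subset \big(\Sigma(t)\big)^c_{\delta_t(r)},\qquad \delta_t(r)\triangleq r^{\exp\int_0^t\|v(\sigma)\|_{LL}d\sigma}.
\end{equation*}
Since $r\le e^{-1}$ and the exponent is positive, we have $\delta_t(r)\le e^{-1}$, so Definition~\ref{Defs} converts this geometric localization into the velocity bound
\begin{equation*}
\|\nabla v(t)\|_{L^\infty(\mathrm{supp}\,\nabla\rho(t))}\le (-\log\delta_t(r))\,\|\nabla v(t)\|_{L(\Sigma_t)}=(-\log r)\,\|\nabla v(t)\|_{L(\Sigma_t)}\exp\!\left(\int_0^t\|v(\sigma)\|_{LL}d\sigma\right)\le (-\log r)\,W(t).
\end{equation*}

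With this pointwise bound in hand, the plan is to differentiate the density equation to get
\begin{equation*}
(\partial_t+v\cdot\nabla)\partial_j\rho=-\partial_j v\cdot\nabla\rho,
\end{equation*}
whose standard $L^p$ estimate (valid for every $p\in[1,\infty]$ thanks to the divergence-free character of $v$) reads
\begin{equation*}
\|\nabla\rho(t)\|_{L^p}\le \|\nabla\rho_0\|_{L^p}+\int_0^t\|\nabla v(\tau)\cdot\nabla\rho(\tau)\|_{L^p}d\tau.
\end{equation*}
Since each integrand is supported in $(\Sigma(\tau))^c_{\delta_\tau(r)}$, the $L^\infty$ bound above factors out to produce
\begin{equation*}
\|\nabla\rho(t)\|_{L^p}\le \|\nabla\rho_0\|_{L^p}+(-\log r)\int_0^t W(\tau)\|\nabla\rho(\tau)\|_{L^p}d\tau,
\end{equation*}
and Gronwall's lemma then delivers the first claimed inequality with a harmless absorbing constant $C$, using $\exp(-\log r\int_0^t W)=r^{-\int_0^t W}$.

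The vorticity bound follows routinely: from $\partial_t\omega+v\cdot\nabla\omega=\partial_1\rho$ and the divergence-free $L^p$ transport estimate,
\begin{equation*}
\|\omega(t)\|_{L^p}\le \|\omega_0\|_{L^p}+\int_0^t\|\nabla\rho(\tau)\|_{L^p}d\tau,
\end{equation*}
and inserting the exponential bound on $\|\nabla\rho(\tau)\|_{L^p}$ — which is nondecreasing in $\tau$ since $W\ge 0$ — gives the second claim. The only delicate step in the whole plan is producing the quantitative distance between $\mathrm{supp}\,\nabla\rho(t)$ and $\Sigma(t)$, which is precisely the content of Lemma~\ref{s2lem1}; everything else is routine transport bookkeeping combined with the defining property of the space $L(\Sigma_t)$.
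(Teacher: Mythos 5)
Your proposal is correct and follows essentially the same route as the paper: transport of the set where $\rho$ is constant, Lemma~\ref{s2lem1} to get the quantitative containment $\mathrm{supp}\,\nabla\rho(\tau)\subset(\Sigma_\tau)^c_{\delta_\tau(r)}$, the definition of $L(\Sigma_\tau)$ to bound $\Vert\nabla v(\tau)\Vert_{L^\infty}$ on that support by $-W(\tau)\log r$, and Gronwall. (The sign you obtain in $(\partial_t+v\cdot\nabla)\partial_j\rho=-\partial_j v\cdot\nabla\rho$ is in fact the correct one; the paper writes $+$, but this is immaterial for the $L^p$ estimate.)
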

\begin{proof}
In order to prove the first estimate, we apply the partial derivative $\partial_j$ to the second equation of the system \eqref{B},
\begin{equation}
\partial_t\partial_j\rho +v\cdot\nabla(\partial_j \rho)=\partial_j v\cdot\nabla\rho.
\end{equation}
Hence, for all $1\leq p\leq \infty$ one has
\begin{equation}
\Vert\partial_j \rho(t)\Vert_{L^p}\leq \Vert\partial_j \rho_0\Vert_{L^p} +\int_0^t\Vert\partial_j v\cdot\nabla\rho(\tau)\Vert_{L^p}d\tau.
\end{equation}
Since $\rho_0$ is transported by the flow $\psi$,
$$
\rho(\tau,x)=\rho_0(\psi^{-1}(t,x)),
$$
then $\rho(\tau)$ is constant in $\psi\big(\tau,(\Sigma_0)_r\big)$ and therefore,
$$
\hbox{supp }\nabla\rho(\tau)\subset \psi\big(\tau,(\Sigma_0)_r\big)^c=\psi\big(\tau,(\Sigma_0)_r^c\big).
$$
Using  Lemma \ref{s2lem1} we get easily 
$$
\hbox{supp }\nabla\rho(\tau)\subset \big(\Sigma_\tau\big)_{\delta_\tau(r)}^c,\quad \delta_\tau(r)\triangleq r^{\exp\big(\int_0^\tau\Vert v(\sigma)\Vert_{LL}d\sigma\big)}.
$$ 
 Accordingly we obtain
\begin{eqnarray*}
\Vert\partial_j v\cdot\nabla\rho(\tau)\Vert_{L^p}\leq \Vert\nabla v(\tau)\Vert_{L^\infty\big((\Sigma_\tau)_{\delta_\tau(r)}^c\big)}\Vert\nabla\rho(\tau)\Vert_{L^p}.
\end{eqnarray*}
Thus coming back to the Definition \ref{Defs}  we may write
\begin{eqnarray*}
\Vert\nabla v(\tau)\Vert_{L^\infty\big((\Sigma_\tau)_{\delta_\tau(r)}^c\big)}&\leq & -\Vert\nabla v(\tau)\Vert_{L(\Sigma_\tau)}\log\delta_\tau(r)\\ &\leq & -(\log r)\Vert\nabla v(\tau)\Vert_{L(\Sigma_\tau)}\exp\Big(\int_0^\tau\Vert v(\sigma)\Vert_{LL}d\sigma\Big) \\ &\leq & -W(\tau)\log r.
\end{eqnarray*}
Recall that the function $W(t)$ was introduced in Proposition \ref{p11}.
It follows that
\begin{equation*}
\Vert\nabla \rho(t)\Vert_{L^p}\leq \Vert\nabla \rho_0\Vert_{L^p} -C\log r\int_0^t\Vert\nabla\rho(\tau)\Vert_{L^p}W(\tau)\ d\tau.
\end{equation*}
By Gronwall lemma we conclude that
\begin{equation*}
\Vert\nabla \rho(t)\Vert_{L^p}\leq \Vert\nabla \rho_0\Vert_{L^p} r^{-C\int_0^t W(\tau)d\tau}.
\end{equation*}
Therefore, in view of the vorticity equation \eqref{omega} we obviously have  for all $1\leq p\leq \infty$,
\begin{eqnarray*}
\Vert\omega(t)\Vert_{L^p}&\leq & \Vert\omega_0\Vert_{L^p}+\int_0^t\Vert \nabla\rho(\tau)\Vert_{L^p} d\tau\\ &\leq &\Vert\omega_0\Vert_{L^p}+\Vert\nabla \rho_0\Vert_{L^p}r^{-C\int_0^t W(\tau)d\tau}t. 
\end{eqnarray*}
This completes the proof of the proposition.
\end{proof}
Now we shall discuss the a priori estimates which are the key of the proof of Theorem \ref{th2}. 
\begin{proposition}\label{prop22}
Let $0<\varepsilon<1$, $a>1$ and $\Sigma_0$ a closed set of the plane and $X_0$ be  a  vector field of class $C^\varepsilon$ as well as its divergence and whose support  is embedded in $(\Sigma_0)_h^c$. Let $(v,\rho)$ be a smooth  solution of the system (\ref{B}) defined  on a  time interval  $[0,T]$ and with the  initial data $(v_0,\rho_0)$ such that $\rho_0$ is constant in the set $(\Sigma_0)_r$. Let $ X_t$ be the  solution of :
\begin{equation}\label{31}
\left\{ \begin{array}{ll}
\big(\partial_t +v\cdot\nabla\big)X_t=\pxt v. &\\
X_{| t=0}=X_{0}.
\end{array} \right.
\end{equation} 
then we have the estimates,
\begin{equation*}%\label{sppp}
\textnormal{supp}\, X_t\subset \big(\Sigma_t\big)_{\delta_t(h)}^c\quad \textnormal{with}\quad \delta_t(h)\triangleq h^{\exp\big(\int_0^t\Vert v(\tau)\Vert_{LL}d\tau\big)}.
\end{equation*}
%with $\delta_t(h)\triangleq h^{\exp\Big(\int_0^t\Vert v(\tau)\Vert_{LL}d\tau\Big)}$.
\begin{equation*}%\label{div1}
\Vert \textnormal{div}X_t\Vert_{\varepsilon}\leq \Vert \textnormal{div}X_{0}\Vert_{C^\varepsilon}h^{-C\int_0^t W(\tau)d\tau},
\end{equation*}
\begin{eqnarray*}%\label{38}
\tilde{\Vert} X_t\Vert_{\varepsilon} +\Vert\pxt \omega(t)\Vert_{\varepsilon-1} &\leq & C e^{Ct}\Big(\tilde{\Vert} X_0\Vert_{ \varepsilon}+\Vert \pxz\omega_0\Vert_{\varepsilon-1}+\Vert \pxz\rho_0\Vert_{\varepsilon}\Big)h^{-C\int_0^t W(\tau)d\tau}\notag\\ &\times &\exp\Big(Ct\Vert \nabla \rho_0\Vert_{L^\infty}r^{-C\int_0^t W(\tau)d\tau}\Big).
\end{eqnarray*}
\begin{equation*}%\label{39}
\Vert \pxt \rho(t)\Vert_{\varepsilon}\leq \Vert \pxt \rho_{0}\Vert_{C^\varepsilon}r^{-C\int_0^t W(\tau)d\tau},
\end{equation*}
where
$$
W(t)\triangleq\Big(\Vert\nabla v(t)\Vert_{L(\Sigma(t))}+\Vert \omega(t)\Vert_{L^a \cap L^\infty} \Big)\exp\bigg(\int_0^t\Vert v(\tau)\Vert_{LL}d\tau\bigg).
$$
\end{proposition}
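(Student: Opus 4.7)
The plan is to transpose the Gronwall scheme of the smooth-patch lemma preceding Proposition \ref{prop1} to the singular setting, replacing the transport estimate of Lemma \ref{lem1} by its logarithmic refinement in Proposition \ref{p11}. Each application of Proposition \ref{p11} then turns the smooth exponential $e^{CV(t)}$ into the power $h^{-C\int_0^t W}$, and for each source term I must decide whether it is a $g_2$-piece (absorbed into the logarithmic exponent via $\|g_2\|_\varepsilon\leq -CW(t)\|f\|_\varepsilon\log h$) or a $g_1$-piece (left in the Duhamel integral).

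I start with the three ``easy'' estimates. Equation \eqref{31} is solved by the push-forward $X_t=(\nabla\psi(t)\cdot X_0)\circ\psi^{-1}(t)$, so $\hbox{supp}\,X_t=\psi(t,\hbox{supp}\,X_0)\subset(\Sigma_t)^c_{\delta_t(h)}$ by Lemma \ref{s2lem1}. Applying $\hbox{div}$ to \eqref{31} and using $\hbox{div}\,v=0$, the two cross terms $\partial_i v^j\partial_j X_t^i$ and $\partial_i X_t^j\partial_j v^i$ cancel, so $(\partial_t+v\cdot\nabla)\hbox{div}\,X_t=0$, and Proposition \ref{p11} with $g\equiv 0$ yields the $C^\varepsilon$ bound on $\hbox{div}\,X_t$. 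For the last estimate, commuting $\pxt$ with the transport operator and using $\partial_t\rho+v\cdot\nabla\rho=0$ gives $(\partial_t+v\cdot\nabla)\pxt\rho=0$; since $\rho_0$ is constant on $(\Sigma_0)_r$, one has $\hbox{supp}\,\pxz\rho_0\subset(\Sigma_0)_r^c$, hence $\hbox{supp}\,\pxt\rho(t)\subset(\Sigma_t)^c_{\delta_t(r)}$, so Proposition \ref{p11} applied at level $r$ with vanishing source gives the stated bound.

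The core step is the coupled estimate. Writing \eqref{31} as $(\partial_t+v\cdot\nabla)X_t=\pxt v$, the (localized) analogue of Lemma 3.3.2 of \cite{C} gives $\|\pxt v\|_\varepsilon\lesssim \|\nabla v\|_{L^\infty(\hbox{supp}\,X_t)}\tilde{\Vert}X_t\Vert_\varepsilon+\|\pxt\omega\|_{\varepsilon-1}$, and the support inclusion together with Definition \ref{Defs} bounds $\|\nabla v\|_{L^\infty(\hbox{supp}\,X_t)}\leq -W(t)\log h$; I put the first summand in $g_2$ and the second in $g_1$. For $\pxt\omega$ the commutation with the transport operator yields
\begin{equation*}
(\partial_t+v\cdot\nabla)\pxt\omega=\partial_1(\pxt\rho)-\partial_{\partial_1 X_t}\rho,
\end{equation*}
whose $C^{\varepsilon-1}$ norm is dominated by $\|\pxt\rho\|_\varepsilon+\|\nabla\rho\|_{L^\infty}\tilde{\Vert}X_t\Vert_\varepsilon$ via Corollary \ref{ppxr0}; the first summand is already controlled by $\|\pxz\rho_0\|_\varepsilon r^{-C\int W}$, and both are placed in $g_1$. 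Setting $\Gamma(t)\triangleq \tilde{\Vert}X_t\Vert_\varepsilon+\|\pxt\omega\|_{\varepsilon-1}$ and summing the two resulting Proposition \ref{p11}-estimates gives
\begin{equation*}
\Gamma(t)\leq Ch^{-C\int_0^t W}\big(\Gamma(0)+\|\pxz\rho_0\|_\varepsilon\big)+C\int_0^t h^{-C\int_\tau^t W}\|\nabla\rho(\tau)\|_{L^\infty}\,\Gamma(\tau)\,d\tau.
\end{equation*}
Injecting the a priori bound $\|\nabla\rho(\tau)\|_{L^\infty}\leq\|\nabla\rho_0\|_{L^\infty}r^{-C\int_0^\tau W}$ from Proposition \ref{peor} and closing with Gronwall then produces the advertised factor $\exp\big(Ct\|\nabla\rho_0\|_{L^\infty}r^{-C\int W}\big)$.

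The main difficulty should be the simultaneous bookkeeping of the two independent scales $h$ (the support of $X_0$) and $r$ (the plateau width of $\rho_0$): the $h$-factor must come out of the computation multiplicatively, while the $r$-factor must appear only inside the Gronwall exponent. This requires feeding the $\pxt\rho$-estimate (at scale $r$) as a source into the $\pxt\omega$-equation (treated at scale $h$), and verifying in each application of Proposition \ref{p11} that the source supports lie inside the correct sub-level set so that the logarithmic loss genuinely scales with the level at which the proposition is invoked.
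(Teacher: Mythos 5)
Your proposal reproduces the paper's own argument: the same support and transport facts (Lemma \ref{s2lem1} for $\textnormal{supp}\,X_t$, the homogeneous equations for $\textnormal{div}X_t$ and $\pxt\rho$ fed into Proposition \ref{p11}), the same $g_1/g_2$ splitting of $\pxt v$ with the logarithmic piece $-CW(t)\Vert X_t\Vert_\varepsilon\log h$, the same identity $\pxt\partial_1\rho=\partial_1(\pxt\rho)-\partial_{\partial_1X_t}\rho$ estimated via Corollary \ref{ppxr0}, and the same Gronwall closure using Proposition \ref{peor} to produce the factor $\exp\big(Ct\Vert\nabla\rho_0\Vert_{L^\infty}r^{-C\int_0^tW}\big)$. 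The only cosmetic difference is that the paper normalizes $\Gamma(t)$ by $h^{C\int_0^tW(\tau)d\tau}$ before applying Gronwall, a step your unnormalized integral inequality needs implicitly as well.
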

%prrrrrrrrrrrrrrrrrrrrrrrrrrrrrrrrrrrrrrrrrrrrrrrrrrrrrrrrrrrrrrrrrrrrrrrrrr
\begin{proof}
The embedding result on the support  of $X_t$  can be deduced from  Lemma \ref{s2lem1} and the complete proof can be found \mbox{ in \cite{C}. } The second result concerning the estimate of  $\textnormal{div }X_{t}$ follows from the equation 
$$
\big(\partial_t +v\cdot\nabla\big)\textnormal{div}\, X_{t}=0,
$$
and  Proposition \ref{p11}. As to the estimate  of $\Vert X_{t}\Vert_\varepsilon$ we shall  admit the following assertion and  more details see for instance Chapter 9 from \cite{C}.
 $$
  \partial_{X_{t}} v(t)=g_1(t)+g_2(t),
  $$
  with
\begin{eqnarray*}
g_1(t) &\leq & C\Vert \partial_{X_{t}} \omega(t)\Vert_{\varepsilon-1}+C\Vert\textnormal{div}X_{t}\Vert_\varepsilon\Vert\omega(t)\Vert_{L^\infty},
\end{eqnarray*}
and
\begin{eqnarray*}
g_2(t) &\leq & -C\Vert X_{t}\Vert_\varepsilon W(t)\log h.
\end{eqnarray*}
Then applying once  again Proposition \ref{p11} to the equation \eqref{31} we get
\begin{eqnarray*}
\Vert X_{t}\Vert_{C^\varepsilon}\lesssim \Vert X_{0}\Vert_{\varepsilon}h^{-C\int_0^t W(\tau)d\tau}&+&\int_0^t\Vert\textnormal{div}X_{\tau}\Vert_\varepsilon\Vert\omega(\tau)\Vert_{L^\infty} h^{-C\int_\tau^t W(\tau')d\tau'}d\tau\\ & +&\int_0^t\Vert \partial_{X_{\tau}}\omega(\tau)\Vert_{\varepsilon-1}  h^{-C\int_\tau^t W(\tau')d\tau'}d\tau.
\end{eqnarray*}
According to the definition of the function $W$ we may write
$$
\int_0^t\Vert\omega(\tau)\Vert_{L^\infty}d\tau \leq h^{-C\int_0^t W(\tau)d\tau}.
$$
This together with the estimate of $\|\hbox{div} X_t\|_{C^\varepsilon}$  yields
\begin{eqnarray}\label{xt2}
\tilde{\Vert} X_{t}\Vert_{\varepsilon}\lesssim \tilde{\Vert} X_{0}\Vert_{\varepsilon}h^{-C\int_0^t W(\tau)d\tau}+ \int_0^t\Vert \partial_{X_{\tau}}\omega(\tau)\Vert_{\varepsilon-1}  h^{-C\int_\tau^t W(\tau')d\tau'}d\tau.
\end{eqnarray}
Now applying the operator $\pxt$ to the vorticity  equation (\ref{omega}) gives
\begin{equation*}
\big(\partial_t +v\cdot\nabla\big)\partial_{X_{t}}\omega=\partial_{X_{t}}\partial_1\rho,
\end{equation*}
then  from  Proposition \ref{p11} and taking advantage of the inequality \eqref{pp5} we get
\begin{eqnarray*}
\Vert \partial_{X_{t}}\omega(t)\Vert_{ \varepsilon-1}&\lesssim &\Vert \partial_{X_{0}}\omega_0\Vert_{ \varepsilon-1}h^{-C\int_0^t W(\tau)d\tau}+\int_0^t\Vert  \partial_{X_{\tau}}\partial_1\rho(\tau)\Vert_{\varepsilon-1}h^{-C\int_\tau^t W(\tau')d\tau'}d\tau\notag\\ &\lesssim & \Vert \partial_{X_{0}}\omega_0\Vert_{ \varepsilon-1}h^{-C\int_0^t W(\tau)d\tau}+\int_0^t\Vert  \partial_{X_{\tau}}\rho(\tau)\Vert_{\varepsilon}h^{-C\int_\tau^t W(\tau')d\tau'}d\tau\\
&+&\int_0^t\Vert\nabla\rho(\tau)\Vert_{L^\infty}\tilde{\Vert} X_{\tau}\Vert_{\varepsilon}h^{-C\int_\tau^t W(\tau')d\tau'}d\tau.
\end{eqnarray*}
As it has been  shown for  smooth patches 
 the scalar function   $\partial_{X_{t}}\rho(t)$ is transported by the flow, 
 $$
 (\partial_t+v\cdot\nabla )\partial_{X_{t}}\rho(t)=0.
 $$ 
 It is easy to check that  $\partial_{X_{0}}\rho_0$ is supported  in $(\Sigma_0)_{r}^c$ and thus  Proposition \ref{p11} gives 
$$
\Vert  \partial_{X_{\tau}}\rho(\tau)\Vert_{\varepsilon}\leq \Vert  \partial_{X_0}\rho_0\Vert_{\varepsilon}\,r^{-C\int_0^\tau W(\tau')d\tau'}.
$$
 Hence, we obtain 
\begin{eqnarray*}%\label{xtd2}
\Vert \partial_{X_{t}}\omega(t)\Vert_{ \varepsilon-1}&\lesssim& \big(\Vert \partial_{X_{0}}\omega_0\Vert_{ \varepsilon-1}+\Vert  \partial_{X_{0}}\rho_0\Vert_{ \varepsilon}t\big)h^{-C\int_0^t W(\tau)d\tau}\\
&+&\int_0^t\Vert\nabla\rho(\tau) \Vert_{L^\infty}\tilde{\Vert} X_{\tau}\Vert_{\varepsilon}h^{-C\int_\tau^t W(\tau')d\tau'}d\tau.
\end{eqnarray*}
Putting together the preceding estimate and  \eqref{xt2}   we find
\begin{equation*}
\Gamma(t)\lesssim  \Gamma(0)+\Vert \partial_{X_{0}}\rho_0\Vert_{ \varepsilon}t+\int_0^t\big(\Vert \nabla\rho(\tau)\Vert_{L^\infty}+1)\Gamma(\tau)d\tau,
\end{equation*}
where  $$\Gamma(t)\triangleq\big(\Vert \partial_{X_{t}}\omega(t)\Vert_{\varepsilon-1}+\tilde{\Vert} X_{t}\Vert_{ \varepsilon}\big)h^{C\int_0^t W(\tau)d\tau}.$$ 
So Gronwall lemma ensures that
\begin{equation}\label{gam}
\Gamma(t)\leq \big(\Gamma(0)+\Vert  \partial_{X_{0}}\rho_0\Vert_{ \varepsilon}\big)e^{C\int_0^t\Vert \nabla \rho(\tau)\Vert_{L^\infty}d\tau}e^{Ct}.
\end{equation}
Then Proposition \ref{peor} completes the proof.
\end{proof}
\quad Now, we have to control the Lipschitz norm  of the velocity  outside the transported of the singular set $\Sigma_0$ by the flow.
\begin{proposition}\label{prop220}
Let $0<\varepsilon<1$,  $0<r<e^{-1}$ and $a>1$ and $\Sigma_0$ be a closed set of the plane. Let $\mathcal{X}_0=\big(X_{0,\lambda,h}\big)_{\lambda\in\Lambda,h\in(0,e^{-1}]}$ be a family  vector field  which is   $\Sigma_0-$admissible of order $\Xi_0=(\alpha_,\beta_0,\gamma_0)$ and  let  $(v,\rho)$ be a smooth solution of the  \mbox{system $(\ref{B})$} defined  on a  time interval  $[0,T^\star[$. We assume that  the initial data satisfy $\omega_0,\,\nabla\rho_0\in L^a $, $\rho_0$ is constant in the set $(\Sigma_0)_r$ and 
 \begin{equation*}
\sup_{h\in]0,e^{-1}]}h^{-\beta_0}\Vert \rho_0\Vert^{\varepsilon+1}_{(\Sigma_0)_h,(\mathcal{X}_0)_h}+\sup_{h,\in]0,e^{-1}]}h^{-\beta_0}\Vert \omega_0\Vert^{\varepsilon}_{(\Sigma_0)_h,(\mathcal{X}_0)_h}<\infty.
\end{equation*}
Then there exists $0<T<T^\star $ such that 
$$
\Vert \nabla v(t)\Vert_{L(\Sigma(t))} \in L^\infty\big([0,T]\big).
$$
\end{proposition}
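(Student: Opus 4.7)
\begin{proofs}{}
The plan mimics the closing argument of Proposition \ref{prop1}, but with the logarithmic localization in $h$ playing the role of the Lipschitz bound, and with all quantities tracked as functions of the parameter $h$ appearing in the $\Sigma_0$--admissible family. Let me denote
\[
\mathcal{W}(t)\triangleq\int_0^t W(\tau)\,d\tau,\qquad V(t)\triangleq\int_0^t\Vert v(\tau)\Vert_{LL}\,d\tau,
\]
with $W$ as in Proposition \ref{prop22}. First I would apply Proposition \ref{prop22} to each individual vector field $X_{t,\lambda,h}$ of the transported family, so that for every $(\lambda,h)$ the quantities $\tilde{\Vert}X_{t,\lambda,h}\Vert_{\varepsilon}$ and $\Vert\partial_{X_{t,\lambda,h}}\omega(t)\Vert_{\varepsilon-1}$ are bounded by a multiple of
\[
\big(\tilde{\Vert}X_{0,\lambda,h}\Vert_{\varepsilon}+\Vert\partial_{X_{0,\lambda,h}}\omega_0\Vert_{\varepsilon-1}+\Vert\partial_{X_{0,\lambda,h}}\rho_0\Vert_{\varepsilon}\big)\,h^{-C\mathcal{W}(t)}e^{Ct}\exp\big(Ct\Vert\nabla\rho_0\Vert_{L^\infty}r^{-C\mathcal{W}(t)}\big).
\]
Combining these with the $\Sigma_0$--admissibility bounds $I(\Sigma_{0,h},\mathcal{X}_{0,h})\gtrsim h^{-\gamma_0}$, $N_\varepsilon(\Sigma_{0,h},\mathcal{X}_{0,h})\lesssim h^{\beta_0}$ and with the assumed control $h^{-\beta_0}\Vert\omega_0\Vert^{\varepsilon}_{(\Sigma_0)_h,(\mathcal{X}_0)_h}\leq C_0$, $h^{-\beta_0}\Vert\rho_0\Vert^{\varepsilon+1}_{(\Sigma_0)_h,(\mathcal{X}_0)_h}\leq C_0$, and keeping track of how $I$ evolves via the Lipschitz estimate on the flow, I would obtain, uniformly in $(\lambda,h)$,
\[
\Vert\omega(t)\Vert^{\varepsilon}_{(\Sigma(t))_{h'},(\mathcal{X}_t)_h}\leq C_0\,e^{Ct}\,h^{-(\gamma_0-\beta_0)-C\mathcal{W}(t)}\exp\big(Ct\Vert\nabla\rho_0\Vert_{L^\infty}r^{-C\mathcal{W}(t)}\big),
\]
for some $h'$ of the form $\delta_t(h^{\alpha_0})$, i.e. with $-\log h'\sim (-\log h)e^{V(t)}$.

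The next step is to invoke Theorem \ref{propoo1} in its singular version adapted to the family $(\mathcal{X}_t)_h$, which gives for each $h\in(0,e^{-1}]$
\[
\Vert\nabla v(t)\Vert_{L^\infty((\Sigma_t)_{h'}^c)}\leq Ca\Vert\omega(t)\Vert_{L^a}+\frac{C}{\varepsilon}\Vert\omega(t)\Vert_{L^\infty}\log\bigg(e+\frac{\Vert\omega(t)\Vert^{\varepsilon}_{(\Sigma_t)_{h'},(\mathcal{X}_t)_h}}{\Vert\omega(t)\Vert_{L^\infty}}\bigg).
\]
Taking the logarithm of the previous tangential estimate produces a dominant term of the form $\big[C\mathcal{W}(t)+(\gamma_0-\beta_0)\big](-\log h)$ plus a bounded remainder involving $\log C_0$, $t$ and $t\Vert\nabla\rho_0\Vert_{L^\infty}r^{-C\mathcal{W}(t)}$. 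Dividing through by $-\log h'$ (which is itself comparable to $(-\log h)e^{V(t)}$, hence to $-\log h$ once $V(t)$ is bounded on the interval of interest) and taking the supremum over $h\in(0,e^{-1}]$, the unbounded $(-\log h)$ factor cancels and I would arrive at the estimate
\[
\Vert\nabla v(t)\Vert_{L(\Sigma(t))}\lesssim\Vert\omega(t)\Vert_{L^a\cap L^\infty}\big(1+C\mathcal{W}(t)\big)+\text{(lower-order terms bounded in terms of $C_0$ and $t$).}
\]

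To close the estimate I would feed this back into $W(t)$. Proposition \ref{peor} yields $\Vert\omega(t)\Vert_{L^a\cap L^\infty}\leq C_0+C_0 t\, r^{-C\mathcal{W}(t)}$ and Lemma \ref{lem3} gives $\Vert v(\tau)\Vert_{LL}\lesssim\Vert\omega(\tau)\Vert_{L^a\cap L^\infty}$, hence $V(t)$ is controlled by the same quantity. Consequently
\[
W(t)\leq C_0\big(1+t\,r^{-C\mathcal{W}(t)}\big)\big(1+\mathcal{W}(t)\big)\exp\big(C_0t+C_0\int_0^t\tau\,r^{-C\mathcal{W}(\tau)}d\tau\big).
\]
On a sufficiently short interval $[0,T]$ chosen so that $r^{-C\mathcal{W}(T)}\leq 2$ and $e^{V(T)}\leq 2$, this collapses to a linear Gronwall-type inequality $\mathcal{W}'(t)\leq C_0(1+\mathcal{W}(t))$, which gives $\mathcal{W}(t)\leq e^{C_0 t}-1$. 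A continuity/bootstrap argument then picks the largest $T$ for which the smallness constraints $r^{-C\mathcal{W}(T)}\leq 2$ and the analogous bound on $V(T)$ remain valid; on that interval $\Vert\nabla v(t)\Vert_{L(\Sigma(t))}\in L^\infty([0,T])$ as required. The main obstacle, and the step that needs to be executed carefully, is the cancellation between the $h^{-C\mathcal{W}(t)}$ blow-up coming from Proposition \ref{prop22} and the $-\log h$ denominator defining the $L(\Sigma_t)$ norm: the admissibility exponents $(\alpha_0,\beta_0,\gamma_0)$ must be tracked throughout so that after division only $\mathcal{W}(t)$ (and not a fixed power of $h$) survives, which is exactly what makes the bootstrap on $\mathcal{W}$ possible. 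The secondary difficulty is the $r^{-C\mathcal{W}(t)}$ factor from the density, which forces $T$ to be small enough that $\mathcal{W}(T)$ does not exceed $\log(2)/(-C\log r)$.
\end{proofs}
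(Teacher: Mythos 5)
Your plan reproduces the paper's argument step for step: tangential estimates from Proposition \ref{prop22}, a flow-based lower bound on $I$ for the transported family yielding the $h^{\int_0^tW}$ loss, the logarithmic estimate of Theorem \ref{propoo1}, cancellation of the $-\log h$ factor against the $h^{-C\int_0^tW}$ blow-up, and a Gronwall-plus-continuity choice of $T$ under the smallness constraint on $r^{-C\int_0^TW}$. The approach is correct and essentially identical to the paper's proof (minor bookkeeping differences in the exponent $\beta_0$ versus $\beta_0-\gamma_0$ and in parametrizing by $\delta_t^{-1}(h)$ are immaterial).
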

\begin{proof}
The dynamical vector fields $ \{X_{t,\lambda,h}\}$ are nothing but the transported of the initial family by the flow. They are given by the identity 
$$
Y_{t,\lambda,h}(x)\triangleq X_{t,\lambda,h}\big(\psi(t,x)\big)=\partial_{X_{0,\lambda,h}} \psi(t,x)
$$
and clearly they satisfy
$$
\partial_t Y_{t,\lambda,h}(x)=\{\nabla v(t,\psi(t,x))\}\cdot Y_{t,\lambda,h}(x).
$$
Fix $t>0$ and set for $\tau\in [0,t],$ \, $Z(\tau,x)=Y_{t-\tau,\lambda,h}(x)$, then 
$$
\partial_\tau Z(\tau,x)=-\{\nabla v(t-\tau,\psi(t-\tau,x))\}\cdot Z(\tau,x).
$$
Hence using  Gronwall lemma we get
\begin{eqnarray*}
|Z(t,x)|&\le& |Z(0,x)|\, e^{\int_0^t|\nabla v(t-\tau,\psi(t-\tau,x))|d\tau}\\
&\le&|Z(0,x)|\, e^{\int_0^t|\nabla v(\tau,\psi(\tau,x))|d\tau},
\end{eqnarray*}
which is equivalent to
$$
|Y_{0,\lambda,h}(x)|\le |Y_{t,\lambda,h}(x)|e^{\int_0^t|\nabla v(\tau,\psi(\tau,x))|d\tau}.
$$
This gives in turn, 
\begin{equation*}
\vert X_{0,\lambda,h}(\psi^{-1}(t,x))\vert \leq \vert X_{t,\lambda,h}(x)\vert\, e^{\int_0^t\vert \nabla v(\tau,\psi(\tau,\psi^{-1}(t,x)))\vert d\tau}.
\end{equation*}
Denoting by  $\delta_t^{-1}$ the inverse function of $\delta_t$ given by the formula,
$$ \delta_t^{-1}(h)\triangleq h^{\exp(-\int_0^t\Vert v(\tau)\Vert_{LL}d\tau)},$$
 the last estimate  yields
\begin{eqnarray}\label{ii}
\inf_{x\in(\Sigma_t)_{\delta_t^{-1}(h)}^{c}}\sup_{\lambda\in\Lambda}\vert X_{0,\lambda,h}(\psi^{-1}(t,x))\vert &\leq &\inf_{x\in(\Sigma_t)_{\delta_t^{-1}(h)}^{c}}\sup_{\lambda\in\Lambda}\vert X_{t,\lambda,h}(x)\vert\\ &\times &\exp\Big(\int_0^t\big\Vert \nabla v\big(\tau,\psi(\tau,\psi^{-1}(t,\cdot))\big)\big\Vert_{L^\infty((\Sigma_t)_{\delta_t^{-1}(h)}^c)} d\tau\Big)\notag,
\end{eqnarray}
where we recall that  $\Sigma_t=\psi(t, \Sigma_0)$ and $(\Sigma_t)^c_{\eta}=\big\{x\in \RR^2;\, d\big(x,\Sigma_t\big)\geq \eta\big\}$.\\
According to Lemma  \ref{s2lem1} we have 
$$\psi^{-1}\big(t,\big(\Sigma_t\big)_{\delta_t^{-1}(h)}^c\big)\subset \big(\Sigma_0)_{\delta_t(\delta_t^{-1}(h))}^c=(\Sigma_0)_{h}^c.
$$
 Then  we immediately deduce that
\begin{eqnarray}\label{lipsi}
\inf_{x\in(\Sigma_t)_{\delta_t^{-1}(h)}^c}\sup_{\lambda\in\Lambda}\vert X_{0,\lambda,h}(\psi^{-1}(t,x))\vert &= &\inf_{y\in\psi^{-1}(t,(\Sigma_t)_{\delta_t^{-1}(h)}^c)}\sup_{\lambda\in\Lambda}\vert X_{0,\lambda,h}(y)\vert\notag\\ 
&\geq& \inf_{y\in(\Sigma_0)_{h}^c}\sup_{\lambda\in\Lambda}\vert X_{0,\lambda,h}(y)\vert\notag\\ 
&\geq & I\big((\Sigma_0)_{h},(\mathcal{X}_0)_{h}\big).
 \end{eqnarray}
%\inf_{y\in(\Sigma_0)_{h}^c}\sup_{\lambda\in\Lambda}\vert X_{0,\lambda,h}(y)\vert= \notag \\ &\geq &
Moreover, in view of the same lemma  we have
$$
\psi\Big(\tau,\psi^{-1}\big(t,(\Sigma_t)_{\delta_t^{-1}(h)}^c\big)\Big)\subset\big(\Sigma_\tau )_{\delta_{\tau,t}(\delta_t^{-1}(h))}^c=\big(\Sigma_\tau)_{\delta_\tau^{-1}(h)}^c\subset\big(\Sigma_\tau)_{h}^c.
$$
Consequently,  we may  write
\begin{eqnarray*}
\big\Vert \nabla v\big(\tau,\psi(\tau,\psi^{-1}(t,\cdot))\big)\big\Vert_{L^\infty\big(({\Sigma_t})_{\delta_t^{-1}(h)}^c\big)}&\leq & \Vert \nabla v(\tau)\Vert_{L^\infty((\Sigma_\tau)_{h}^c)}\notag\\ &\leq& -(\log h)\Vert \nabla v(\tau)\Vert_{L(\Sigma_\tau)}\notag\\ &\leq&  -(\log h)W(\tau).
\end{eqnarray*}
Combining  the last estimate with   \eqref{ii} and\eqref{lipsi} we get
\begin{equation}\label{ixtd}
I\big((\Sigma_t)_{\delta_t^{-1}(h)},(\mathcal{X}(t))_h\big)%=\inf_{x\in\big(\Sigma(t)\big)_{k(t)}^{c}}\sup_{\lambda\in\Lambda}\vert X_{t,\lambda,h}(x)\vert 
 \geq  I\big((\Sigma_0)_{h},(\mathcal{X}_0)_{h}\big)h^{\int_0^t W(\tau)d\tau}
\end{equation}
We introduce
$$
\Upsilon(t)\triangleq  \Vert\omega(t)\Vert_{L^\infty}\tilde{\Vert} X_{t,\lambda,h}\Vert_{\varepsilon}+\Vert \partial_{X_{t,\lambda,h}} \omega(t)\Vert_{\varepsilon-1}.
$$
Then combining  Proposition \ref{prop22}  and  Proposition \ref{peor} we get
\begin{eqnarray*}
\Upsilon(t)&\lesssim & e^{Ct}\big(1+\Vert\omega_0\Vert_{L^\infty}\big)\big(\tilde{\Vert} X_{0,\lambda,h}\Vert_{\varepsilon}+\Vert \partial_{X_{0,\lambda,h}} \omega_0\Vert_{\varepsilon-1}+\Vert \partial_{X_{0,\lambda,h}} \rho_0\Vert_{\varepsilon-1}\big)\\ &\times &\exp\big(Ct\Vert\nabla \rho_0\Vert_{ L^\infty}r^{-C\int_0^t W(\tau)d\tau}\big)h^{-C\alpha_0(\int_0^t W(\tau)d\tau)}.
\end{eqnarray*}
Hence in view of the  Definition \ref{def11} and \eqref{ixtd} we immediately deduce that
\begin{eqnarray}\label{eqom}%\label{e0}
\Upsilon(t) &\leq & Ce^{Ct}\Big(N_\varepsilon\big((\Sigma_0)_{h},(\mathcal{X}_0)_{h}\big)+\big(1+\Vert\omega_0\Vert_{L^\infty} \big)\big(\Vert\omega_0\Vert^{\varepsilon}_{(\Sigma_0)_{h},(\mathcal{X}_0)_{h}}+\Vert\rho_0\Vert^{\varepsilon+1}_{(\Sigma_0)_{h},(\mathcal{X}_0)_{h}}\big)\Big)\notag\\ &\times & I\big((\Sigma_0)_{h},(\mathcal{X}_0)_{h}\big) h^{-C\alpha_0\int_0^t W(\tau)d\tau}e^{Ct\Vert\nabla \rho_0\Vert_{ L^\infty}r^{-C\int_0^t W(\tau)d\tau}}\notag\\ 
&\leq &C e^{Ct}\sup_{0<h\leq e^{-1}}h^{-\beta_0}\Big(N_\varepsilon((\Sigma_0)_{h},(\mathcal{X}_0)_{h})+\big(1+\Vert\omega_0\Vert_{L^\infty} \big)\big(\Vert\omega_0\Vert^{\varepsilon}_{(\Sigma_0)_{h},(\mathcal{X}_0)_{h}}+\Vert\rho_0\Vert^{\varepsilon+1}_{(\Sigma_0)_{h},(\mathcal{X}_0)_{h}}\big)\Big)\notag\\ 
&\times & I\big((\Sigma_t)_{\delta_t^{-1}(h)},(\mathcal{X}(t))_h\big)h^{\beta_0-C\alpha_0\int_0^t W(\tau)d\tau} e^{\{Ct\Vert\nabla \rho_0\Vert_{ L^\infty}r^{-C\int_0^t W(\tau)d\tau}\}}\notag.
\end{eqnarray}

From this estimate  and   the definition (\ref{defre}), one has
\begin{eqnarray}\label{a}
\Vert\omega(t)\Vert_{(\Sigma(t))_{\delta_t^{-1}(h)},(\mathcal{X}(t))_{h}}^{\varepsilon}&\leq &C_0e^{Ct} h^{\beta_0-C\int_0^t W(\tau)d\tau} e^{\{Ct\Vert\nabla \rho_0\Vert_{ L^\infty}r^{-C\int_0^t W(\tau)d\tau})\}}.
\end{eqnarray}
Now, we shall  combine  Theorem \ref{propoo1}   with the Proposition \ref{peor}
and    the monotonicity of the \mbox{map $x\longmapsto x\log \big(e + \frac{a}{x}\big)$} to get
\begin{eqnarray*}
\Vert\nabla v(t)\Vert_{L^\infty((\Sigma_t)_{\delta_t^{-1}(h)}^c)}\leq  C\Big(\Vert\omega_0\Vert_{L^a\cap L^\infty}+t\Vert\nabla \rho_0\Vert_{L^a\cap L^\infty}r^{-C\int_0^t W(\tau)d\tau}\Big)\log\bigg(e+\frac{ \Vert\omega\Vert_{(\Sigma_t)_{\delta_t^{-1}(h)},(\mathcal{X}(t))_{h}}^{\varepsilon}}{\Vert \omega_0\Vert_{L^\infty}}\bigg).
\end{eqnarray*}
So according to the estimate (\ref{a}), we find
\begin{eqnarray*}
\Vert\nabla v(t)\Vert_{L^\infty((\Sigma_t)_{\delta_t^{-1}(h)}^c)}&\leq & C\Big(\Vert\omega_0\Vert_{L^1\cap L^\infty}+t\Vert\nabla \rho_0\Vert_{L^a\cap L^\infty}r^{-C\int_0^t W(\tau)d\tau}\Big)\\ &\times &\bigg(C_0+t+t\Vert\nabla \rho_0\Vert_{L^a\cap L^\infty}r^{-C\int_0^t W(\tau)d\tau}+\Big(\beta_0 -C\int_0^t W(\tau)d\tau\Big)\log h\bigg).
\end{eqnarray*}
It follows  that
\begin{eqnarray*}
\frac{\Vert\nabla v(t)\Vert_{L^\infty((\Sigma_t)_{\delta_t^{-1}(h)}^c)}}{-\log \delta_t^{-1}(h)}&\leq & C\Big(\Vert\omega_0\Vert_{L^1\cap L^\infty}+t\Vert\nabla \rho_0\Vert_{L^a\cap L^\infty}r^{-C\int_0^t W(\tau)d\tau}\Big)\\ &\times &\Big(C_0+t+t\Vert\nabla \rho_0\Vert_{ L^\infty}r^{-C\int_0^t W(\tau)d\tau} +\int_0^t W(\tau)d\tau\Big)e^{\int_0^t\Vert v(\tau)\Vert_{LL}d\tau}.%\exp\Big(\int_0^t\Vert v(\tau)\Vert_{LL}\Big).
\end{eqnarray*}
By the definition of $W(t)$ introduced in the Proposition \ref{p11} we have
\begin{eqnarray*}
W(t)&\leq & C\Big(\Vert\omega_0\Vert_{L^a\cap L^\infty}+\Vert\nabla \rho_0\Vert_{L^a\cap L^\infty}r^{-C\int_0^t W(\tau)d\tau}t\Big)\\ &\times &\Big(C_0+t +\Vert\nabla \rho_0\Vert_{ L^\infty}r^{-C\int_0^t W(\tau)d\tau}t +\int_0^t W(\tau)d\tau\Big)e^{2\int_0^t\Vert v(\tau)\Vert_{LL}d\tau}.
\end{eqnarray*}
We choose $T$ such that
\begin{equation}\label{t1}
T\Vert\nabla \rho_0\Vert_{L^a\cap L^\infty}r^{-C\int_0^T W(\tau)d\tau}\leq  \textnormal{min}\big(1,\Vert\omega_0\Vert_{L^1\cap L^\infty}\big).
\end{equation}
From Lemma \ref{lem3} and Proposition \ref{peor} we get for $t\in [0,T]$
\begin{eqnarray}\label{esll}
\Vert v(t)\Vert_{LL}&\leq &\Vert \omega(t)\Vert_{L^a\cap L^\infty}\notag\\ &\leq &\Vert \omega_0\Vert_{L^a\cap L^\infty}+\Vert\nabla \rho_0\Vert_{L^a\cap L^\infty}r^{-C\int_0^t W(\tau)d\tau}t\notag\\ &\leq &2\Vert \omega_0\Vert_{L^a\cap L^\infty}.
\end{eqnarray}
Then, 
\begin{eqnarray*}
W(t)&\leq & C\Vert\omega_0\Vert_{L^1\cap L^\infty}\Big(C_0+t +\int_0^t W(\tau)d\tau\Big)e^{C\Vert \omega_0\Vert_{L^a\cap L^\infty}t}.
\end{eqnarray*}
Therefore by using  to Gronwall lemma we conclude that for $t\in [0,T]$
\begin{eqnarray*}%\label{nablav}
W(t) &\leq & C\Vert \omega_0\Vert_{L^a\cap L^\infty}\big(C_0+t\big)e^{C\Vert \omega_0\Vert_{L^a\cap L^\infty}t}\exp\Big(e^{C\Vert \omega_0\Vert_{L^1\cap L^\infty}t}\Big).
\end{eqnarray*}
It follows that
$$
\int_0^t W(\tau)d\tau\le (C_0+t)\exp\Big(e^{C\Vert \omega_0\Vert_{L^a\cap L^\infty}t}\Big),\quad  r^{-\int_0^tW(\tau)d\tau} \leq r^{-(C_0+t)\exp\big(e^{Ct\Vert \omega_0\Vert_{L^a\cap L^\infty}}\big)}.
$$
Hence in order to ensure the assumption \eqref{t1} it suffices to impose,
$$
T\|\nabla\rho_0\|_{L^\infty}r^{-(C_0+T)\exp\big(e^{CT\Vert \omega_0\Vert_{L^a\cap L^\infty}}\big)}=\textnormal{min}\big(1,\Vert\omega_0\Vert_{L^1\cap L^\infty}\big).
$$
The existence of such  $T>0$ can be justified by a continuity argument and this ends  the proof of the proposition.
\end{proof}
\subsection{Existence}
The existence part can be done in a similar way to the case of smooth patches by smoothing out the initial data. However we should be careful about this procedure which must preserve the imposed geometric structure. Especially we have seen in the a priori estimates that a constant density close to the singular set is a  crucial fact and thereby  this must be satisfied for the smooth approximation of the density. Hence  
  we smooth  the   initial velocity as before  by setting $v_{0,n}=S_nv_0$ but for the density  we have to choose a compactly supported mollifiers.  More precisely, we take $\phi\in C_0^\infty(\RR^2)$ a positive function  supported in the ball of center $0$ and radius $1$ and of \mbox{integral $1$} over $\RR^2$. We denote by $(\phi_n)_{n\in\NN}$ the usual mollifiers:
$$
\phi_n(x)=n^2\phi(nx)
$$ 
and we set
$$
 \rho_{0,n}=\phi_n*\rho_0.
$$
Then   the following uniform bounds hold true,
$$
\Vert \rho_{0,n}\Vert_{L^2}\leq\Vert \nabla\rho_0\Vert_{L^2},\quad\Vert \nabla\rho_{0,n}\Vert_{L^1\cap L^\infty}\leq \Vert \nabla\rho_0\Vert_{L^1\cap L^\infty}
$$
and
$$
\Vert \pxzl\rho_{0,n}\Vert_{\varepsilon}\leq\Vert \pxzl\rho_0\Vert_{\varepsilon}+\tilde{\Vert}X_{0,\lambda}\Vert_{\varepsilon}\Vert \nabla\rho_0\Vert_{L^\infty}. 
$$
The  first two estimates are easy to get by using the classical properties of the convolution laws. The proof of the last estimate is obtained by writing  the identity
$$
 \pxzl\rho_{0,n}=\phi_n* (\pxzl\rho_{0})+ [\partial_{X_{0,\lambda}},\phi_n*]\rho_{0},
$$
where we use the notation $[A, B*]f=A(B*f)-B*(Af).$ We can easily check that the  first term is uniformly bounded in $C^\varepsilon$, as to the second one we use 
Proposition \ref{comu} which gives
$$
\|[\partial_{X_{0,\lambda}},\phi_n*]\rho_{0}\|_{C^\varepsilon}\le C\|X_{0,\lambda}\|_{C^\varepsilon}\|\nabla \rho_0\|_{L^\infty}.
$$
 It remains to show that $\rho_{0,n}$ is constant in small neighborhood of $\Sigma_0$. For this aim we consider the set
$$
(\Sigma_0)_{r-\frac{1}{n}}\triangleq \Big\{x\in \RR^2; \,  d(x,\Sigma_0)\le  r-\frac{1}{n}\Big\}.
$$
We can easily check according to the support property of the convolution that $\rho_{0,n}$ is constant in the set $\Sigma_{r-\frac{1}{n}}^0$ which contains  $(\Sigma_0)_{r/2}$ for $n$ big enough. The remaining of the proof is similar to the one of the Theorem \ref{the} and we omit here the details.
\subsection{Uniqueness}
It seems that the uniqueness argument performed in the smooth patches cannot be easily  extended to singular patches because the velocity is not Lipschitz. To avoid this difficulty we shall use the original argument of Yudovich \cite{Y}. First let us observe according to \cite{C} that   the velocity does not necessary belong to $L^2$ but to an affine space of type $\sigma+L^2$. The vector field  $\sigma $ is a stationary solution for Euler equations and can be constructed as follows: let  $g$ be a radial function in $C^\infty_0$ supported away from the origin and set, \begin{equation}\label{sigma}
\sigma(x)=\frac{x^\perp}{\vert x\vert^2}\int_0^{\vert x\vert}rg(r)dr.
\end{equation}
Such $\sigma$ is a smooth stationary solutions of the incompressible Euler system,
$$
\partial_t\sigma=\mathbb{P}(\sigma\cdot\nabla \sigma)=0
$$
where $\mathbb{P}\triangleq\Delta^{-1}\textnormal{div}$ is Leray's projector onto divergence-free vector fields. 
It behaves like $1/\vert x\vert$ at infinity and  $\nabla \sigma$ belongs to $H^s(\RR^2)$ for all $r \in \RR$. For a vortex patch we can show that its velocity given by Biot-Savart law belongs to some $\sigma+L^2$.\\
Let us state the following lemma
\begin{lemma}
Let $\sigma$ be a stationary vector field satisfying \eqref{sigma} and $(v_0,\rho_0)$ be a smooth initial data belonging to $(\sigma+L^2)\times L^2$. Then any local solution  $(v(t),\rho(t))$ of the system \eqref{B} associated to the initial data $(v_0,\rho_0)$ belongs to $(\sigma+L^2)\times L^2$
\end{lemma}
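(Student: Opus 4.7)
The plan is to work separately with the two components: the density is handled by the transport structure, while for the velocity we subtract off $\sigma$ and show that the remainder stays in $L^2$ by an energy estimate.

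For the density, since $\rho$ is transported by the divergence-free flow $\psi$ associated to $v$, we have $\rho(t,x)=\rho_0(\psi^{-1}(t,x))$, and because $\psi(t,\cdot)$ is measure-preserving we immediately obtain $\|\rho(t)\|_{L^2}=\|\rho_0\|_{L^2}$. This gives $\rho(t)\in L^2$ for all time within the lifespan of the smooth solution.

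For the velocity, set $u(t)\triangleq v(t)-\sigma$, so that $u(0)=v_0-\sigma\in L^2$ and $u$ is divergence-free. Using the fact that $\sigma$ is a stationary solution of Euler, i.e. $\sigma\cdot\nabla\sigma=-\nabla q_0$ for some smooth pressure $q_0$, the expansion $v\cdot\nabla v=\sigma\cdot\nabla\sigma+u\cdot\nabla\sigma+v\cdot\nabla u$ converts the momentum equation into
\begin{equation*}
\partial_t u+v\cdot\nabla u+u\cdot\nabla\sigma+\nabla\tilde p=\rho\,\vec{e}_2,\qquad \tilde p\triangleq p-q_0.
\end{equation*}
Taking the $L^2$ scalar product with $u$ (which is legitimate since $u$ and $\rho$ are in $L^2$ and the solution is smooth, so all boundary terms at infinity vanish), and exploiting $\mathrm{div}\,v=\mathrm{div}\,u=0$ to kill $\int u\cdot(v\cdot\nabla u)\,dx$ and $\int u\cdot\nabla\tilde p\,dx$, we are left with
\begin{equation*}
\frac{1}{2}\frac{d}{dt}\|u(t)\|_{L^2}^2\le \|\nabla\sigma\|_{L^\infty}\|u(t)\|_{L^2}^2+\|\rho(t)\|_{L^2}\|u(t)\|_{L^2}.
\end{equation*}
Since $\nabla\sigma\in H^s$ for every $s$, we have $\|\nabla\sigma\|_{L^\infty}<\infty$, and combined with the already-established conservation of $\|\rho\|_{L^2}$, Gronwall's inequality yields a local-in-time bound on $\|u(t)\|_{L^2}$, hence $v(t)\in\sigma+L^2$ for all $t$ in the existence interval.

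The main (mild) obstacle is justifying the integrations by parts when $v$ only lies in the affine space $\sigma+L^2$: $\sigma$ behaves like $1/|x|$ at infinity and is not square integrable, so one cannot pair $v$ against $v$. The trick is precisely the decomposition $v=\sigma+u$: the dangerous unbounded part $\sigma$ drops out of every integral thanks to its stationary Euler character, and the terms we actually estimate involve only the $L^2$ correction $u$ and the smooth multiplier $\nabla\sigma$. Everything else reduces to standard energy bookkeeping, and the smoothness assumption on $(v,\rho)$ in the statement of the lemma removes any additional functional-analytic subtlety.
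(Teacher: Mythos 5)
Your proof is correct and follows essentially the same route as the paper: the same decomposition $v=\sigma+u$ with $\sigma\cdot\nabla\sigma$ absorbed into the pressure, the same $L^2$ energy estimate on $u$ closed by Gronwall's lemma, and the conservation of $\Vert\rho(t)\Vert_{L^2}$ coming from transport by a divergence-free field. Your explicit justification of the integrations by parts is a minor addition to what the paper writes, but the argument is identical in substance.
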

\begin{proof}
Setting $v=u+\sigma$ then  the system\eqref{B} can be written in the form,
\begin{equation}\label{f1}
\left\{ \begin{array}{ll}
\partial_{t}u+(u+\sigma)\cdot\nabla u=-u\cdot\nabla \sigma-\nabla p +\rho\vec{e}_2, &\\
\partial_{t}\rho+(u+\sigma)\cdot\nabla\rho =0, &\\
u_{\vert_{t=0}}=v_0+\sigma,\quad \rho_{\vert_{t=0}}=\rho_0.
\end{array} \right. 
\end{equation} 
Since $\textnormal{div}u=\textnormal{div}\sigma=0$, then we have the following $L^2$ estimates
$$
\Vert u(t)\Vert_{L^2}\leq \Vert u_0\Vert_{L^2}+\Vert \rho_0\Vert_{L^2}t+\Vert \nabla\sigma\Vert_{L^\infty}\int_0^t \Vert u(\tau)\Vert_{L^2}d\tau.
$$
By Gronwall inequality we conclude that
$$
\Vert u(t)\Vert_{L^2}\leq \big(\Vert u_0\Vert_{L^2}+\Vert \rho_0\Vert_{L^2}t\big)e^{\Vert \nabla\sigma\Vert_{L^\infty}t}.
$$
This conclude the proof of the lemma.
\end{proof}
Now we shall prove the uniqueness part. As we have already seen the velocity belongs to $\sigma+L^2$  and the uniqueness in the space $L^\infty_T L^2$ should be done by using the formulation (\ref{f1}). However for the clarity of the proof  we shall  assume that $\sigma=0$ and the  proof works for non trivial  $\sigma$ as well.    Let $(v_1,\rho_1, p_1)$ and $(v_2,\rho_2, p_2)$ two solutions of the system  (\ref{B}) with the same initial data. We notice that $(v,\rho,p)\triangleq (v_2-v_1,\rho_2-\rho_1, p_2-p_1)$ satisfies
\begin{equation}
\left\{ \begin{array}{ll}
\partial_{t}v+v_2\cdot\nabla v=-v\cdot\nabla v_1-\nabla p +\rho\vec{e}_2, &\\
\partial_{t}\rho+v_2\cdot\nabla\rho =-v\cdot\nabla \rho_1, &\\
v_{\vert_{t=0}}=v_0,\quad \rho_{\vert_{t=0}}=\rho_0.
\end{array} \right. 
\end{equation} 
A standard energy method with H\"older inequality yield for all $q\in[a,+\infty[$,
\begin{eqnarray*}
\frac{1}{2}\frac{d}{dt}\Vert v\Vert_{L^2}^2&\leq &\Vert\nabla v_1(t)\Vert_{L^{q}}\Vert v(t)\Vert_{L^{2q'}}^2+\Vert \rho(t)\Vert_{L^2}\Vert v(t)\Vert_{L^2}\\\ &\lesssim & q\Vert\omega_1(t)\Vert_{L^{a}\cap L^\infty}\Vert v(t)\Vert_{L^{\infty}}^{\frac{2}{q}}\Vert v(t)\Vert_{L^{2}}^{\frac{2}{q'}}+\Vert \rho(t)\Vert_{L^2}\Vert v\Vert_{L^2} .
\end{eqnarray*}
with $q'=\frac{q}{q-1}$ and
\begin{eqnarray*}
\frac{1}{2}\frac{d}{dt}\Vert \rho(t)\Vert_{L^2}^2&\le &\Vert\nabla \rho_1(t)\Vert_{L^{\infty}}\Vert v(t)\Vert_{L^{2}}\Vert \rho(t)\Vert_{L^2}.
\end{eqnarray*}
Let $\eta$ be a small parameter and set, 
$$
\Gamma_\eta(t)\triangleq \sqrt{\Vert \rho(t)\Vert_{L^2}^2+\Vert v(t)\Vert_{L^2}^2+\eta}.
$$
Then,
$$
\frac{d}{dt}\Gamma_\eta(t)\le C q\Vert\omega_1(t)\Vert_{L^{a}\cap L^\infty}\Vert v(t)\Vert_{L^{\infty}}^{\frac{2}{q}}\Gamma_\eta(t)^{1-\frac{2}{q}}+(1+\Vert\nabla \rho_1(t)\Vert_{L^{\infty}})\Gamma_\eta(t).
$$
Setting
$$
\Upsilon_\eta(t)\triangleq e^{-\int_0^t(1+\Vert\nabla \rho_1(\tau)\Vert_{L^{\infty}})d\tau}\Gamma_\eta(t),
$$
we obtain
$$
\frac{d}{dt}\Upsilon_\eta(t)\le C  q\Vert\omega_1(t)\Vert_{L^{a}\cap L^\infty}\Vert v(t)\Vert_{L^{\infty}}^{\frac{2}{q}}\Upsilon_\eta(t)^{1-\frac{2}{q}} e^{-\frac{2}{q}\int_0^t(1+\Vert\nabla \rho_1(\tau)\Vert_{L^{\infty}})d\tau}
$$
which gives 
$$
\frac{2}{q}\Upsilon_\eta(t)^{\frac{2}{q}-1}\frac{d}{dt}\Upsilon_\eta(t)\leq C\Vert\omega_1(t)\Vert_{L^{a}\cap L^\infty}\Vert v(t)\Vert_{L^{\infty}}^{\frac{2}{q}} .$$
Integrating in time we get
$$
\Upsilon_\eta(t)\leq\Big(\eta^{\frac{1}{q}}+C\int_0^t\Vert\omega_1(\tau)\Vert_{L^{a}\cap L^\infty}\Vert v(\tau)\Vert_{L^{\infty}}^{\frac{2}{q}} d\tau\Big)^{\frac{q}{2}}.
$$
Letting $\eta$  go to $0$ leads
\begin{eqnarray*}
\Vert \rho(t)\Vert_{L^2}^2+\Vert v(t)\Vert_{L^2}^2&\leq & \Vert v(t)\Vert_{L^\infty_{T_0}L^{\infty}}^{2} \Big(C\int_0^t\Vert\omega_1(\tau)\Vert_{L^{a}\cap L^\infty}d\tau\Big)^{q}.%\\ &\leq & C_0 \Big(2\int_0^t\Vert\omega_1(\tau)\Vert_{L^{a}\cap L^\infty}d\tau\Big)^{q}
\end{eqnarray*}
Then, from the Biot-Savart law we have for $a\in (1,2)$
\begin{eqnarray*}
\Vert \rho(t)\Vert_{L^2}^2+\Vert v(t)\Vert_{L^2}^2&\leq & \Vert \omega(t)\Vert_{L^\infty_{T_0}(L^{a}\cap L^\infty}^{2} \Big(C\int_0^t\Vert\omega_1(\tau)\Vert_{L^{a}\cap L^\infty}d\tau\Big)^{q}\\ &\leq & C_0 \Big(C\int_0^t\Vert\omega_1(\tau)\Vert_{L^{a}\cap L^\infty}d\tau\Big)^{q}.
\end{eqnarray*}
Therefore, We may find $T$ such that $\int_0^T\Vert\omega_1(\tau)\Vert_{L^{a}\cap L^\infty}d\tau<\frac{1}{C}$. Letting first  $q$ tend to $+\infty$ and using bootstrap  arguments we can conclude that $(v,\rho)\equiv 0$ on $[0,T_0]$
\section{Appendix}
\begin{proposition}\label{comu}
Given $0<\varepsilon<1$, $X$  a vector field belonging to $C^\varepsilon$ and $f\in Lip(\RR^2)$. Let $\phi\in C_0^\infty(\RR^2)$ be a positive function  supported in the ball of center $0$ and radius $1$ and such that $\int_{\RR^2}\phi(x)dx=1$. For all $n\in\NN$ we  set
$$
\phi_n(x)=n^2\phi(nx),
$$
and we denote by $R_n$ the convolution operator with the function $\phi_n$. Then we have the following estimate
$$
\Vert[\partial_X,R_n]f\Vert_{C^\varepsilon}\leq C\Vert X\Vert_{\varepsilon}\Vert\nabla f\Vert_{L^\infty}.
$$
\end{proposition}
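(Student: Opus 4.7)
The plan is to reduce the vector-field commutator to a scalar commutator between a multiplication operator and the mollifier. The key observation is that $R_n$ commutes with partial derivatives, so one can unfold the commutator componentwise:
\begin{align*}
[\partial_X, R_n]f &= X\cdot\nabla R_n f - R_n(X\cdot\nabla f) \\
&= \sum_{i=1}^2 \bigl(X^i R_n(\partial_i f) - R_n(X^i\partial_i f)\bigr) = \sum_{i=1}^2 [M_{X^i}, R_n]\partial_i f,
\end{align*}
where $M_a$ denotes pointwise multiplication by the scalar $a$. It is then enough to prove the scalar estimate $\|[M_a, R_n]g\|_{C^\varepsilon} \le C\|a\|_{C^\varepsilon}\|g\|_{L^\infty}$ and apply it with $a = X^i$ and $g = \partial_i f\in L^\infty$ (here $\partial_i f$ is bounded since $f$ is Lipschitz).

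For the scalar commutator I would start from the kernel identity
$$
[M_a, R_n]g(x) = \int_{\RR^2} \phi_n(x-y)\bigl(a(x) - a(y)\bigr) g(y)\,dy.
$$
Using $|a(x) - a(y)| \le \|a\|_{C^\varepsilon}|x-y|^\varepsilon$ together with $\operatorname{supp}\phi_n \subset B(0, 1/n)$, the $L^\infty$ estimate $\|[M_a, R_n]g\|_{L^\infty} \le Cn^{-\varepsilon}\|a\|_{C^\varepsilon}\|g\|_{L^\infty}$ is immediate. For the H\"older seminorm I would freeze $x, x'$ with $h = |x - x'|$ and split into two regimes. In the case $h \ge 1/n$ the increment is at most twice the previous $L^\infty$ bound, which is already dominated by $Ch^\varepsilon\|a\|_{C^\varepsilon}\|g\|_{L^\infty}$.

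For the small-scale case $h < 1/n$ I would add and subtract $\int\phi_n(x'-y)(a(x) - a(y)) g(y)\,dy$ to obtain the decomposition
$$
[M_a,R_n]g(x)-[M_a,R_n]g(x')=\mathcal{I}+\mathcal{II},
$$
with
$$
\mathcal{I} = \int\bigl(\phi_n(x-y) - \phi_n(x'-y)\bigr)\bigl(a(x) - a(y)\bigr) g(y)\,dy,\qquad \mathcal{II} = (a(x) - a(x'))\, R_n g(x').
$$
The term $\mathcal{II}$ is bounded by $\|a\|_{C^\varepsilon} h^\varepsilon\|g\|_{L^\infty}$ directly. For $\mathcal{I}$ the integrand is supported where $|x-y| \le 2/n$, so $|a(x) - a(y)| \le C\|a\|_{C^\varepsilon} n^{-\varepsilon}$, while $|\phi_n(x-y) - \phi_n(x'-y)| \le \|\nabla\phi_n\|_{L^\infty} h \le C n^3 h$; combined with the $\le C/n^2$ measure of the support this produces $|\mathcal{I}| \le C h n^{1-\varepsilon}\|a\|_{C^\varepsilon}\|g\|_{L^\infty}$, and since $hn \le 1$ with $\varepsilon \in (0,1)$ this is in turn dominated by $C h^\varepsilon\|a\|_{C^\varepsilon}\|g\|_{L^\infty}$, as required.

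The main subtlety lies in the initial reduction: a naive attempt to differentiate $[\partial_X, R_n]f$ pointwise produces the object $\nabla X$, which makes no sense for $X\in C^\varepsilon$, so it is essential to first move the derivative onto $f$ by exploiting the commutation of $R_n$ with $\partial_i$. To make the formal manipulations fully rigorous I would first assume $X$ smooth, carry out the estimates, and then pass to the limit by density in $C^\varepsilon$, the constants depending only on $\|X\|_{C^\varepsilon}$ and $\|\nabla f\|_{L^\infty}$.
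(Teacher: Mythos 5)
Your proof is correct and follows essentially the same route as the paper's: the same kernel identity $[\partial_X,R_n]f(x)=\int_{\RR^2}\phi_n(x-y)\big(X(x)-X(y)\big)\cdot\nabla f(y)\,dy$ and the same add-and-subtract decomposition into a frozen H\"older increment of $X$ times a bounded convolution, plus a kernel-difference term. The only cosmetic difference is that you treat the kernel-difference term by splitting scales at $|x-x'|=1/n$ and invoking $\Vert\nabla\phi_n\Vert_{L^\infty}$ in the small-scale regime, whereas the paper uses the $C^\varepsilon$ seminorm of $\phi_n$ (of size $n^{2+\varepsilon}$) uniformly in the increment; both yield the stated bound.
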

\begin{proof}
By definition we write
$$
[\partial_X,R_n]f(x)=\int_{\RR^2}\phi_n(x-y)\big(X(x)-X(y)\big)\nabla f(y)dy.
$$
Then for all $x_1,x_2\in \RR^2$ such that $\vert x_1-x_2\vert<1$ one has
$$
[\partial_X,R_n]f(x_1)-[\partial_X,R_n]f(x_2)=\hbox{I}+\hbox{II},
$$
where
$$
\hbox{I}=\int_{\RR^2}\phi_n(x_1-y)\big(X(x_1)-X(x_2)\big)\nabla f(y)dy
$$
$$
\hbox{II}=\int_{\RR^2}\big(\phi_n(x_1-y)-\phi_n(x_2-y)\big)\big(X(x_2)-X(y)\big)\nabla f(y)dy.
$$
By straightforward computations  we get,
 $$
\vert\hbox{I}\vert\le\Vert\nabla f\Vert_{L^\infty}\Vert\phi\Vert_{L^1}\vert x_1-x_2\vert^\varepsilon\|X\|_{C^\varepsilon}.
$$
As $\phi$ is supported in the ball of center $0$ and radius $\frac{1}{n}$ then we may write
\begin{eqnarray*}
\vert\hbox{II}\vert &\leq&\Vert\nabla f\Vert_{L^\infty} \int_{B(x_1,\frac{1}{n})\cup B(x_2,\frac{1}{n})}\vert\phi_n(x_1-y)-\phi_n(x_2-y)\vert\vert X(x_2)-X(y)\vert dy\\ &\leq & \Vert\nabla f\Vert_{L^\infty}\big(\hbox{II}_1+\hbox{II}_2+\hbox{II}_3\big) .
\end{eqnarray*}
with
\begin{eqnarray*}
\hbox{II}_1= \int_{B(x_1,\frac{1}{n})}\vert\phi_n(x_1-y)-\phi_n(x_2-y)\vert\vert X(x_2)-X(x_1)\vert dy
 \end{eqnarray*}
 \begin{eqnarray*}
\hbox{II}_2= \int_{B(x_1,\frac{1}{n})}\vert\phi_n(x_1-y)-\phi_n(x_2-y)\vert\vert X(x_1)-X(y)\vert dy
 \end{eqnarray*} 
 and
 \begin{eqnarray*}
\hbox{II}_3= \int_{B(x_2,\frac{1}{n})}\vert\phi_n(x_1-y)-\phi_n(x_2-y)\vert\vert X(x_2)-X(y)\vert dy.
 \end{eqnarray*}
 The term $\hbox{II}_1$ can be treated by the same way as $\hbox{I}$. For term $\hbox{II}_2$   we have
  \begin{eqnarray*}
\hbox{II}_2&\leq & n^{2+\varepsilon}\Vert \phi\Vert_{\varepsilon}\Vert X\Vert_{\varepsilon} \int_{B(x_1,\frac{1}{n})}\vert x_1-y\vert^{\varepsilon} dy\\ &\leq & \Vert \phi\Vert_{\varepsilon}\Vert X\Vert_{\varepsilon}.
 \end{eqnarray*}   
The bound of and $\hbox{II}_3$ is done similarly.
\end{proof}

\end{document}